\documentclass[11pt]{article}
\usepackage{tikz}
\usepackage{fullpage, url,amsmath,amsfonts,amssymb,mathtools,mathrsfs,graphicx,  algorithm, float, sansmath,epstopdf,color,caption,enumitem,tabularx}
\usepackage[final]{pdfpages}
\usepackage{amsthm}
\usetikzlibrary{automata,topaths}
\usetikzlibrary{decorations.pathreplacing,shapes.misc}
\usepackage{fancyhdr}

\usepackage{multirow}
\usetikzlibrary{calc,arrows}
\theoremstyle{plain}
\newtheorem{definition}{Definition}

\newtheorem{theorem}{Theorem}
\newtheorem{lemma}[theorem]{Lemma}

\newtheorem{corollary}[theorem]{Corollary}
\newtheorem{proposition}[theorem]{Proposition}
\newtheorem{claim}[theorem]{Claim}

\usepackage{blindtext}

\usepackage[colorlinks,linkcolor=blue,citecolor=red]{hyperref}

  % tangent space

\newcommand{\R}{{\mathbb R}}
\newcommand{\E}{{\mathcal E}}

\newcommand{\X}{{\mathcal X}}

  % tangent space

\setlength{\oddsidemargin}{0pt}
\setlength{\evensidemargin}{0pt}
\setlength{\textwidth}{6.5in}
\setlength{\topmargin}{0in}
\setlength{\textheight}{8.5in}

%\begin{document}
% Optional PDF information
%\ifpdf
%\hypersetup{
%  pdftitle={\TheTitle},
 % pdfauthor={\TheAuthors}
%}
%\fi

% The next statement enables references to information in the
% supplement. See the xr-hyperref package for details.

%\externaldocument{ex_supplement}

% FundRef data to be entered by SIAM
%<funding-group>
%<award-group>
%<funding-source>
%<named-content content-type="funder-name"> 
%</named-content> 
%<named-content content-type="funder-identifier"> 
%</named-content>
%</funding-source>
%<award-id> </award-id>
%</award-group>
%</funding-group>

\begin{document}
\title{ A Lyapunov Analysis of Momentum Methods \\ in  Optimization}
\author{Ashia C. Wilson \qquad Benjamin Recht \qquad Michael I. Jordan\\\\University of California, Berkeley}
\date{\today}
\maketitle

% REQUIRED
\begin{abstract}
Momentum methods play a significant role in optimization. Examples include Nesterov's accelerated gradient method and the conditional gradient algorithm. Several momentum methods are provably optimal under standard oracle models, and all use a technique called {\em estimate sequences} to analyze their convergence properties.  The technique of estimate sequences has long been considered difficult to understand, leading many researchers to generate alternative, ``more intuitive'' methods and analyses. We show there is an equivalence between the technique of estimate sequences and a family of Lyapunov functions in both continuous and discrete time. This connection allows us to develop a simple and unified analysis of many existing momentum algorithms, introduce several new algorithms, and strengthen the connection between algorithms and continuous-time dynamical systems. 
\end{abstract}

% REQUIRED
\section{Introduction}
Momentum is a powerful heuristic for accelerating the convergence of optimization methods.    One can intuitively ``add momentum'' to a method by adding to the current step a weighted version of the previous step, encouraging the method to move along search directions that had been previously seen to be fruitful.  Such methods were first studied formally by Polyak~\cite{Polyak1964}, and have been employed in many practical optimization solvers.  As an example, since the 1980s, momentum methods have been popular in neural networks as a way to accelerate the backpropagation algorithm.  The conventional intuition is that momentum allows local search to avoid ``long ravines'' and ``sharp curvatures'' in the sublevel sets of cost functions~\cite{Rumelhardt}.

Polyak motivated momentum methods by an analogy to a ``heavy ball'' moving in a potential well defined by the cost function.  However, Polyak's physical intuition was difficult to make rigorous mathematically.  For quadratic costs, Polyak was able to provide an eigenvalue argument that showed that his Heavy Ball Method required no more iterations than the method of conjugate gradients~\cite{Polyak1964}.\footnote{Indeed, when applied to positive-definite quadratic cost functions, Polyak's Heavy Ball Method is equivalent to Chebyshev's Iterative Method~\cite{Chebyshev}.}   Despite its intuitive elegance, however, Polyak's eigenvalue analysis does not apply globally for general convex cost functions. In fact, Lessard~\emph{et al.} derived a simple one-dimensional counterexample where the standard Heavy Ball Method does not converge~\cite{Lessard14}.  

In order to make momentum methods rigorous, a different approach was required.  In celebrated work, Nesterov devised a general scheme to accelerate convex optimization methods, achieving optimal running times under oracle models in convex programming~\cite{Nesterov04}. To achieve such general applicability, Nesterov's proof techniques abandoned the physical intuition of Polyak~\cite{Nesterov04}; in lieu of differential equations and Lyapunov functions, Nesterov devised the method of \emph{estimate sequences} to verify the correctness of these momentum-based methods.  Researchers have struggled to understand the foundations and scope of the estimate sequence methodology since Nesterov's initial papers.  The associated proof techniques are often viewed as an ``algebraic trick.''  

To overcome the lack of fundamental understanding of the estimate sequence technique, 
several authors have recently proposed schemes to achieve acceleration without appealing 
to it~\cite{Fazel, BubeckLeeSingh15, Lessard14, DroriTeboulle13}.  One promising general 
approach to the analysis of acceleration has been to analyze the continuous-time 
limit of accelerated methods~\cite{SuBoydCandes,Krichene15}, or to derive these 
limiting ODEs directly via an underlying Lagrangian~\cite{Acceleration}, and
to prove that the ODEs are stable via a Lyapunov function argument.  However, these 
methods stop short of providing principles for deriving a discrete-time optimization 
algorithm from a continuous-time ODE.  There are many ways to discretize ODEs, but not 
all of them give rise to convergent methods or to acceleration.  Indeed, for unconstrained 
optimization on Euclidean spaces in the setting where the objective is strongly convex, 
Polyak's Heavy Ball method and Nesterov's accelerated gradient descent have the same 
continuous-time limit.  One recent line of attack on the discretization problem is 
via the use of a time-varying Hamiltonian and symplectic 
integrators~\cite{BetancourtJordanWilson}.  In this paper, we present 
a different approach, one based on a fuller development of Lyapunov theory.  
In particular, we present Lyapunov functions for both the continuous and 
discrete settings, and we show how to move between these Lyapunov functions.  
Our Lyapunov functions are time-varying and they thus allow us to establish 
rates of convergence.  They allow us to dispense with estimate sequences altogether, 
in favor of a dynamical-systems perspective that encompasses both continuous time
and discrete time.
\section{A Dynamical View of Momentum Methods}
\label{Sec:Dyn}
\paragraph{Problem setting.}
We are concerned with the following class of constrained optimization problems:
\begin{align}\label{eq:main-problem}
\min_{x \in \X} \; f(x),
\end{align}
where $\X \subseteq \R^d$ is a closed convex set and $f \colon \X \to \R$ is a continuously differentiable convex function. 
We use the standard Euclidean norm $\|x\| = \langle x,x \rangle^{1/2}$ throughout.
We consider the general non-Euclidean setting in which the space $\X$ is endowed with a distance-generating function $h \colon \X \to \R$ that is convex and essentially smooth (i.e., $h$ is continuously differentiable in $\X$, and $\|\nabla h(x)\|_* \to \infty$ as $\|x\| \to \infty$). The function $h$ can be used to define a measure of distance in $\X$ via its Bregman divergence:
\begin{align*}
D_h(y,x) = h(y) - h(x) - \langle \nabla h(x), y-x \rangle,
\end{align*}
which is nonnegative since $h$ is convex. 
The \emph{Euclidean setting} is obtained when $h(x) = \frac{1}{2} \|x\|^2$.

We denote a discrete-time sequence in lower case, e.g., $x_k$ with $k \ge 0$ an integer. We denote a continuous-time curve in upper case, e.g., $X_t$ with $t \in \R$. An over-dot means derivative with respect to time, i.e., $\dot X_t = \frac{d}{dt} X_t$.

\subsection{The Bregman Lagrangian}
Wibisono, Wilson and Jordan recently introduced the following function on curves,
\begin{align}\label{eq:breg}
\mathcal{L}(x, v, t) = e^{\alpha_t + \gamma_t}\left(D_h\left(x, x+ e^{-\alpha_t} v\right) - e^{\beta_t} f(x)\right),
\end{align}
where $x \in \X$, $v \in \mathbb{R}^d$, and $t \in \mathbb{R}$ represent position, velocity and time, respectively~\cite{Acceleration}. 
They called \eqref{eq:breg} the {\em Bregman Lagrangian}. The functions $\alpha,\beta, \gamma: \mathbb{R} \rightarrow \mathbb{R}$ are arbitrary smooth increasing functions of time that determine the overall damping of the Lagrangian functional, as well as the weighting on the velocity and potential function. 
They also introduced the following ``ideal scaling conditions,'' which are
needed to obtain optimal rates of convergence:
\begin{subequations}\label{Eq:IdeSca}
\begin{align}
\dot \gamma_t \,&=\, e^{\alpha_t}  \label{Eq:IdeScaGam}\\
\dot \beta_t \,&\leq\, e^{\alpha_t}   \label{Eq:IdeScaBet}.
\end{align}
\end{subequations}
Given $\mathcal{L}(x, v, t)$, we can define a functional on curves $\{X_t\, :\, t \in \mathbb{R}\}$ called the {\em action} via integration of the Lagrangian: $\mathcal{A}(X)  = \int_{\mathbb{R}} \mathcal{L}(X_t, \dot X_t, t)dt$. Calculation of the Euler-Lagrange equation, $\frac{\partial \mathcal{L}}{\partial x} (X_t, \dot X_t, t) =\frac{d}{dt}\frac{\partial \mathcal{L}}{\partial v} (X_t, \dot X_t, t)$, allows us to obtain a stationary point for the problem of finding the curve which minimizes the action. 
Wibisono, Wilson, and Jordan showed~\cite[(2.7)]{Acceleration} that under the first scaling condition~\eqref{Eq:IdeScaGam}, the Euler-Lagrange equation for the Bregman Lagrangian 
reduces to the following ODE:
\begin{align}\label{eq:el1}
\frac{d}{dt} \nabla h(X_t + e^{-\alpha_t} \dot X_t) = - e^{\alpha_t + \beta_t}\nabla f(X_t).
\end{align}

\paragraph{Second Bregman Lagrangian.}
We introduce a second function on curves,
 \begin{align}\label{eq:breg2}
\mathcal{L}(x, v, t) = e^{\alpha_t + \gamma_t + \beta_t}\left(\mu D_h\left(x, x+ e^{-\alpha_t} v\right) - f(x)\right),
\end{align}
using the same definitions and scaling conditions. The Lagrangian~\eqref{eq:breg2} places a different damping on the kinetic energy than in the original Bregman Lagrangian~\eqref{eq:breg}. 
\begin{proposition}\label{Prop:EL} 
Under the same scaling condition~\eqref{Eq:IdeScaGam}, the Euler-Lagrange equation for the second Bregman Lagrangian~\eqref{eq:breg2} reduces to:
  %\begin{align*}
 \begin{align}\label{eq:el2}
 %Z_t &= X_t + \frac{e^{\beta_t}}{\frac{d}{dt} e^{\beta_t}} \dot X_t \label{Eq:ELZ1},\\
\frac{d}{dt} \nabla h(X_t + e^{-\alpha_t} \dot X_t) &=  \dot \beta_t \nabla h(X_t) - \dot \beta_t \nabla h(X_t + e^{-\alpha_t} \dot X_t) - \frac{e^{\alpha_t}}{\mu} \nabla f(X_t).
 \end{align}
 \end{proposition}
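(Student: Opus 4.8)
The plan is to compute the Euler--Lagrange equation $\frac{\partial\mathcal{L}}{\partial x}(X_t,\dot X_t,t)=\frac{d}{dt}\frac{\partial\mathcal{L}}{\partial v}(X_t,\dot X_t,t)$ for the second Bregman Lagrangian~\eqref{eq:breg2} by a direct calculation parallel to the derivation of~\eqref{eq:el1}, and then to simplify the result using only the first scaling condition~\eqref{Eq:IdeScaGam}. Throughout I abbreviate $z=x+e^{-\alpha_t}v$, writing $Z_t=X_t+e^{-\alpha_t}\dot X_t$ when I specialize to a curve; the only dependence of $\mathcal{L}$ on $x$ and $v$ beyond the explicit $f(x)$ is through $D_h(z,x)=h(z)-h(x)-\langle\nabla h(x),z-x\rangle$.

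First I would differentiate in $v$. Since $h(x)$ and $\nabla h(x)$ are constant in $v$ and $z-x=e^{-\alpha_t}v$, the derivative picks up $e^{-\alpha_t}\nabla h(z)$ from the first term and $-e^{-\alpha_t}\nabla h(x)$ from the linear term, so
\[
\frac{\partial\mathcal{L}}{\partial v}\;=\;\mu\,e^{\gamma_t+\beta_t}\big(\nabla h(z)-\nabla h(x)\big),
\]
with no Hessian appearing. Differentiating in $x$ is a little more delicate, since $x$ occurs both as the base point of $D_h$ and inside $z$; using $\partial z/\partial x=I$ together with the fact that $z-x=e^{-\alpha_t}v$ is itself independent of $x$, the chain rule gives
\[
\frac{\partial\mathcal{L}}{\partial x}\;=\;e^{\alpha_t+\gamma_t+\beta_t}\Big(\mu\big(\nabla h(z)-\nabla h(x)-\nabla^2 h(x)(z-x)\big)-\nabla f(x)\Big),
\]
the single Hessian term coming from the first slot of $\langle\nabla h(x),z-x\rangle$.

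Next I would set $x=X_t$, $v=\dot X_t$ and form the Euler--Lagrange identity. Expanding the time derivative of $\partial\mathcal{L}/\partial v$ produces $\mu(\dot\gamma_t+\dot\beta_t)e^{\gamma_t+\beta_t}\big(\nabla h(Z_t)-\nabla h(X_t)\big)+\mu e^{\gamma_t+\beta_t}\frac{d}{dt}\big(\nabla h(Z_t)-\nabla h(X_t)\big)$. This is exactly where~\eqref{Eq:IdeScaGam}, $\dot\gamma_t=e^{\alpha_t}$, enters: the $\dot\gamma_t$-part of that first term matches the term $\mu e^{\alpha_t+\gamma_t+\beta_t}\big(\nabla h(Z_t)-\nabla h(X_t)\big)$ supplied by $\partial\mathcal{L}/\partial x$, and the two cancel. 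Dividing the surviving identity through by $\mu e^{\gamma_t+\beta_t}$ should leave
\[
\dot\beta_t\big(\nabla h(Z_t)-\nabla h(X_t)\big)+\frac{d}{dt}\big(\nabla h(Z_t)-\nabla h(X_t)\big)\;=\;-\,e^{\alpha_t}\nabla^2 h(X_t)(Z_t-X_t)-\frac{e^{\alpha_t}}{\mu}\nabla f(X_t).
\]

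The last step is to recognize the Hessian term: because $Z_t-X_t=e^{-\alpha_t}\dot X_t$, we have $e^{\alpha_t}\nabla^2 h(X_t)(Z_t-X_t)=\nabla^2 h(X_t)\dot X_t=\frac{d}{dt}\nabla h(X_t)$. Substituting this, the two copies of $\frac{d}{dt}\nabla h(X_t)$ cancel, and rearranging gives precisely~\eqref{eq:el2}. I do not expect any genuine analytic obstacle here; the computation is the standard variational one, and the only things needing care are the bookkeeping --- keeping track of whether each $\nabla h$ or $\nabla^2 h$ is evaluated at $X_t$ or at $Z_t$ --- and checking that all the $e^{\gamma_t}$-weighted terms cancel as claimed. (The reason everything closes up at first order in $h$ is that $\partial\mathcal{L}/\partial v$ carries no Hessian, so differentiating it in time yields only $\frac{d}{dt}\nabla h(Z_t)$ and $\frac{d}{dt}\nabla h(X_t)$ --- exactly the quantities that appear in~\eqref{eq:el2} --- and no higher derivatives of $h$ are ever introduced.)
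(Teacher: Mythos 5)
Your computation is correct and follows essentially the same route as the paper's own proof in Appendix~\ref{App:Prop_EL}: the same partials $\frac{\partial\mathcal{L}}{\partial v}=\mu e^{\beta_t+\gamma_t}(\nabla h(Z_t)-\nabla h(X_t))$ and $\frac{\partial\mathcal{L}}{\partial x}$, the same cancellation of the momentum terms via $\dot\gamma_t=e^{\alpha_t}$, and the same cancellation of the $\frac{d}{dt}\nabla h(X_t)$ terms, which you simply make more explicit by writing $\nabla^2 h(X_t)\dot X_t$. The only point worth noting is that, exactly like the paper's appendix, you are implicitly reading the divergence in~\eqref{eq:breg2} as $D_h(x+e^{-\alpha_t}v,\,x)$ rather than the literal $D_h(x,\,x+e^{-\alpha_t}v)$ under the paper's stated convention for $D_h(y,x)$; this is the reading required for the momentum to be Hessian-free and for~\eqref{eq:el2} to come out as stated.
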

 We provide a proof of Proposition~\ref{Prop:EL} in Appendix~\ref{App:Prop_EL}. 
In what follows, we pay close attention to the special case of the dynamics 
in~\eqref{eq:el2} where $h$ is Euclidean and the damping $ \beta_t = \gamma t$ is linear:
 \begin{align}\label{eq:euc}
 \ddot X_t + 2\gamma\dot X_t + \frac{\gamma^2}{\mu}\nabla f(X_t) =  0.
 \end{align}
When $\gamma = \sqrt{\mu}$, we can discretize the dynamics in~\eqref{eq:euc} to obtain accelerated gradient descent in the setting where $f$ is $\mu$-strongly convex.

\subsection{Lyapunov function for the Euler-Lagrange equation}
\label{Sec:Lyap}
To establish a convergence rate associated with solutions to the Euler-Lagrange equation for both families of dynamics~\eqref{eq:el1} and~\eqref{eq:el2}, under the ideal scaling conditions, we use Lyapunov's method~\cite{Lyapunov}. Lyapunov's method is based on the idea of constructing a positive definite quantity $\E: \X \rightarrow \R$ which decreases along the trajectories of the dynamical system $\dot X_t = v(X_t)$:
\begin{equation*}
\frac{d}{dt}\E(X_t) =\langle \nabla \E(X_t), v(X_t)\rangle < 0.
\end{equation*}
The existence of such a \emph{Lyapunov function} guarantees that the dynamical system converges: if the function is positive yet strictly decreasing along all trajectories, then the dynamical system must eventually approach a region where $\E(X)$ is minimal.  If this region coincides with the stationary points of the dynamics, then all trajectories must converge to a stationary point.
We now discuss the derivation of {\em time-dependent} Lyapunov functions for dynamical systems with bounded level sets. The Lyapunov functions will imply {\em convergence rates} for dynamics~\eqref{eq:breg} and \eqref{eq:el2}.
\begin{proposition}\label{Prop:WeakLyap}
Assume $f$ is convex, $h$ is strictly convex, and  the second ideal scaling condition~\eqref{Eq:IdeScaBet} holds. The Euler-Lagrange equation \eqref{eq:el1} satisfies 
\begin{align}\label{eq:ineq}
\frac{d}{dt}\Big\{ D_h(x, X_t + e^{-\alpha_t} \dot X_t)\Big\} \leq - \frac{d}{dt}\Big\{e^{\beta_t}(f(X_t) - f(x))\Big\},
\end{align}
when  $x = x^\ast$. If the ideal scaling holds with equality, $\dot \beta_t = e^{\alpha_t}$, the solutions satisfy~\eqref{eq:ineq} for $\forall x \in \mathcal{X}$. 
Thus, 
\begin{align}\label{eq:Lyap1}
\mathcal{E}_t =  D_h(x, X_t + e^{-\alpha_t} \dot X_t) + e^{\beta_t} (f(X_t) - f(x))
\end{align}
is a Lyapunov function for dynamics~\eqref{eq:el1}.
\end{proposition}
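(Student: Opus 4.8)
The plan is to differentiate $\mathcal{E}_t$ directly along solutions of the Euler--Lagrange equation~\eqref{eq:el1} and show the two sides of~\eqref{eq:ineq} differ by a sign-definite scalar. Write $Z_t := X_t + e^{-\alpha_t}\dot X_t$ for the ``dual'' variable appearing inside the Bregman divergence, so that~\eqref{eq:el1} reads $\frac{d}{dt}\nabla h(Z_t) = -e^{\alpha_t+\beta_t}\nabla f(X_t)$. First I would record the elementary identity for the time derivative of a Bregman divergence in its second slot with the first argument $x$ held fixed: expanding $D_h(x,Z_t) = h(x) - h(Z_t) - \langle\nabla h(Z_t), x - Z_t\rangle$ and differentiating, the $\langle\nabla h(Z_t),\dot Z_t\rangle$ contributions cancel, leaving
\begin{align*}
\frac{d}{dt} D_h(x,Z_t) = \Big\langle \tfrac{d}{dt}\nabla h(Z_t),\, Z_t - x\Big\rangle.
\end{align*}
Substituting~\eqref{eq:el1}, splitting $Z_t - x = (X_t - x) + e^{-\alpha_t}\dot X_t$, and recognizing $\langle\nabla f(X_t),\dot X_t\rangle = \frac{d}{dt} f(X_t)$ gives
\begin{align*}
\frac{d}{dt} D_h(x,Z_t) = -e^{\alpha_t+\beta_t}\langle\nabla f(X_t), X_t - x\rangle - e^{\beta_t}\tfrac{d}{dt} f(X_t).
\end{align*}

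Next I would expand the right-hand side of~\eqref{eq:ineq}, namely $-\frac{d}{dt}\{e^{\beta_t}(f(X_t)-f(x))\} = -\dot\beta_t e^{\beta_t}(f(X_t)-f(x)) - e^{\beta_t}\frac{d}{dt} f(X_t)$, and subtract. The common term $e^{\beta_t}\frac{d}{dt}f(X_t)$ cancels, so~\eqref{eq:ineq} is equivalent, after dividing by the positive factor $e^{\beta_t}$, to the scalar inequality
\begin{align*}
e^{\alpha_t}\langle \nabla f(X_t), X_t - x\rangle \;\geq\; \dot\beta_t\,(f(X_t) - f(x)).
\end{align*}
Convexity of $f$ gives $\langle\nabla f(X_t), X_t - x\rangle \geq f(X_t) - f(x)$. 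When $x = x^\ast$ the gap $f(X_t) - f(x^\ast)$ is nonnegative, so combining this with the second ideal scaling condition~\eqref{Eq:IdeScaBet}, $\dot\beta_t \le e^{\alpha_t}$, yields $e^{\alpha_t}\langle\nabla f(X_t), X_t - x^\ast\rangle \ge e^{\alpha_t}(f(X_t)-f(x^\ast)) \ge \dot\beta_t(f(X_t)-f(x^\ast))$, which is exactly what is needed. If instead the scaling holds with equality, $\dot\beta_t = e^{\alpha_t}$, the required inequality reduces to convexity alone, with no sign restriction on $f(X_t)-f(x)$, so it holds for every $x\in\X$.

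Finally, since $\frac{d}{dt}\mathcal{E}_t = \frac{d}{dt} D_h(x, Z_t) + \frac{d}{dt}\{e^{\beta_t}(f(X_t)-f(x))\} \le 0$ by~\eqref{eq:ineq}, the function $\mathcal{E}_t$ is nonincreasing along trajectories of~\eqref{eq:el1}; together with $D_h(x,\cdot)\ge 0$ (positivity away from $x$ using strict convexity of $h$), this is precisely the statement that $\mathcal{E}_t$ is a Lyapunov function for~\eqref{eq:el1} (and, taking $x=x^\ast$, forces $f(X_t) - f(x^\ast)$ to decay like $e^{-\beta_t}$).

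I do not expect a genuine obstacle: the computation is short once the Bregman-derivative identity is in place. The only point needing care is the bookkeeping of the two regimes — the passage from $e^{\alpha_t}(f(X_t)-f(x))$ to $\dot\beta_t(f(X_t)-f(x))$ requires $f(X_t)-f(x)\ge 0$, which is exactly why the general-$x$ conclusion is available only under the equality scaling $\dot\beta_t = e^{\alpha_t}$. A minor secondary point is justifying the formal differentiation of $\nabla h(Z_t)$ along the curve, which is legitimate under the stated smoothness hypotheses on $h$ and $f$.
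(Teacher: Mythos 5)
Your proof is correct and follows essentially the same route as the paper's: differentiate $D_h(x,Z_t)$ via the Bregman identity, substitute the Euler--Lagrange dynamics, and close with convexity of $f$ together with the ideal scaling, handling the $x=x^\ast$ and $\dot\beta_t=e^{\alpha_t}$ cases exactly as the paper does. The only difference is presentational — you cancel the common $e^{\beta_t}\frac{d}{dt}f(X_t)$ term and reduce to a single scalar inequality, whereas the paper chains the rewrites — so there is nothing further to add.
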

A similar proposition holds for the second family of dynamics~\eqref{eq:breg2} under the additional assumption that $f$ is $\mu$-uniformly convex with respect to $h$:
\begin{align}\label{eq:unif} D_f(x,y) \geq \mu  D_h(x,y).\end{align}
When $h(x) = \frac{1}{2}\|x\|^2$ is the Euclidean distance, \eqref{eq:unif} is equivalent to the standard assumption that $f$ is $\mu$-strongly convex. 
Another special family is obtained when $h(x) = \frac{1}{p}\|x\|^p$, 
which, as pointed out by Nesterov~\cite[Lemma 4]{Nesterov08}, yields
a Bregman divergence that is $\sigma$-uniformly convex with respect 
to the $p$-th power of the norm:
\begin{align}\label{eq:unif2}
D_h(x,y) \geq \frac{\sigma}{p}\|x-y\|^p,
\end{align}
where $\sigma =2^{-p+2} $. Therefore, if $f$ is uniformly convex with respect to the Bregman divergence generated by the $p$-th power of the norm, it is also uniformly convex with respect to the $p$-th power of the norm itself. We are now ready to state the main proposition for the continuous-time dynamics.
\begin{proposition}\label{prop:strLyap}
Assume $f$ is $\mu$-uniformly convex with respect to $h$~\eqref{eq:unif}, $h$ is strictly convex,  and  the second ideal scaling condition~\eqref{Eq:IdeScaBet} holds. Using dynamics \eqref{eq:el2}, we have the following inequality:
\begin{align*}
\frac{d}{dt}\Big\{ e^{\beta_t} \mu D_h(x, X_t + e^{-\alpha_t} \dot X_t)\Big\} \leq -\frac{d}{dt}\Big\{e^{\beta_t}(f(X_t) - f(x))\Big\},
\end{align*}
for $x = x^\ast$. If the ideal scaling holds with equality, $\dot \beta_t = e^{\alpha_t}$, the inequality holds for $\forall x \in \mathcal{X}$. In sum, we can conclude that
\begin{align}\label{eq:Lyap2}
\mathcal{E}_t = e^{\beta_t} \left(\mu D_h(x, X_t + e^{-\alpha_t} \dot X_t) + f(X_t) - f(x) \right)
\end{align}
is a Lyapunov function for dynamics~\eqref{eq:el2}.
\end{proposition}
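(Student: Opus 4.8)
The plan is to differentiate the candidate Lyapunov function \eqref{eq:Lyap2} along solutions of \eqref{eq:el2} and check that every surviving term is nonpositive. Throughout, write $Z_t = X_t + e^{-\alpha_t}\dot X_t$ for the ``mirror'' variable, so that the Euler--Lagrange equation \eqref{eq:el2} reads $\frac{d}{dt}\nabla h(Z_t) = \dot\beta_t\big(\nabla h(X_t) - \nabla h(Z_t)\big) - \frac{e^{\alpha_t}}{\mu}\nabla f(X_t)$, and record the kinematic identity $\dot X_t = e^{\alpha_t}(Z_t - X_t)$ that comes directly from the definition of $Z_t$.

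First I would note that because the dependence of $D_h(x,Z_t) = h(x) - h(Z_t) - \langle \nabla h(Z_t), x - Z_t\rangle$ on $Z_t$ produces a cancellation upon differentiation, one has $\frac{d}{dt}D_h(x,Z_t) = \langle \frac{d}{dt}\nabla h(Z_t),\, Z_t - x\rangle$. Using this and the product rule, I would expand $\frac{d}{dt}\{e^{\beta_t}\mu D_h(x,Z_t)\} + \frac{d}{dt}\{e^{\beta_t}(f(X_t) - f(x))\}$, substitute the Euler--Lagrange equation for $\frac{d}{dt}\nabla h(Z_t)$ and the kinematic identity for $\dot X_t$, and divide through by the positive factor $e^{\beta_t}$. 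The two inner products containing $\nabla f(X_t)$ then combine to $e^{\alpha_t}\langle \nabla f(X_t), x - X_t\rangle$, and the desired inequality reduces to showing
\begin{align*}
&\dot\beta_t\,\mu\, D_h(x,Z_t) + \mu\dot\beta_t\,\langle \nabla h(X_t) - \nabla h(Z_t),\, Z_t - x\rangle \\
&\qquad{} + e^{\alpha_t}\,\langle \nabla f(X_t),\, x - X_t\rangle + \dot\beta_t\,(f(X_t) - f(x)) \;\le\; 0.
\end{align*}

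Next I would control the gradient term by $\mu$-uniform convexity \eqref{eq:unif}, which in Bregman form says $\langle \nabla f(X_t), x - X_t\rangle \le f(x) - f(X_t) - \mu D_h(x,X_t)$. Multiplying by $e^{\alpha_t}$ and using the second ideal scaling condition $\dot\beta_t \le e^{\alpha_t}$ --- noting that $f(x) - f(X_t) \le 0$ when $x = x^\ast$, so the estimate $e^{\alpha_t}(f(x)-f(X_t)) \le \dot\beta_t(f(x)-f(X_t))$ goes the right way, while the case $\dot\beta_t = e^{\alpha_t}$ makes it an equality valid for every $x$ --- lets me replace $e^{\alpha_t}(f(x)-f(X_t))$ by $-\dot\beta_t(f(X_t)-f(x))$, which cancels the lone $f$-term. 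After dividing by $\mu$, what remains is $\dot\beta_t D_h(x,Z_t) + \dot\beta_t\langle \nabla h(X_t) - \nabla h(Z_t), Z_t - x\rangle - e^{\alpha_t}D_h(x,X_t)$.

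The last step --- the one that needs an identity rather than mere bookkeeping --- is to apply the three-point property of the Bregman divergence,
\[
\langle \nabla h(X_t) - \nabla h(Z_t),\, x - Z_t\rangle = D_h(x,Z_t) + D_h(Z_t,X_t) - D_h(x,X_t).
\]
Substituting it cancels the $D_h(x,Z_t)$ terms and leaves exactly $-\dot\beta_t D_h(Z_t,X_t) + (\dot\beta_t - e^{\alpha_t})D_h(x,X_t)$, which is nonpositive term by term: $\dot\beta_t \ge 0$ and $D_h(Z_t,X_t)\ge 0$ since $h$ is convex, and $\dot\beta_t - e^{\alpha_t}\le 0$ by \eqref{Eq:IdeScaBet} while $D_h(x,X_t)\ge 0$. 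This proves the stated differential inequality, whence $\mathcal{E}_t$ in \eqref{eq:Lyap2} is nonincreasing and therefore a Lyapunov function. The argument runs parallel to the proof of Proposition~\ref{Prop:WeakLyap}; the only genuinely new ingredient is the use of uniform convexity to bound the $\nabla f$ term, and I expect the sign accounting in the scaling-condition step to be the one place that demands care.
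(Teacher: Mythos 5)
Your proof is correct and follows essentially the same route as the paper's: differentiate $\mathcal{E}_t$ along \eqref{eq:el2}, combine the $\nabla f$ terms, invoke $\mu$-uniform convexity \eqref{eq:unif} and the Bregman three-point identity \eqref{eq:bregthree}, and absorb the $e^{\alpha_t}-\dot\beta_t$ slack using the sign of $f(x^\ast)-f(X_t)$. The only difference is the order in which the three-point identity and uniform convexity are applied, which is immaterial.
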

%%
%%First, notice that  In the Euclidean setting with linear damping $\beta_t = \gamma t$, the Lyapunov function can be written,
%%\begin{align*}
%%\E_t = e^{\gamma t}\left(\frac{\mu}{2} \left\|x - X_t - \gamma^{-1} \dot X_t\right\|^2 + f(X_t) - f(x)\right).
%%\end{align*}
%
The proof of both results, which can be found in Appendix~\ref{App:deriv_lyap}, uses 
the fundamental theorem of calculus and basic properties of dynamics~\eqref{eq:el2}. 
Taking $x = x^\ast$ and writing the Lyapunov property $\mathcal{E}_t \leq \mathcal{E}_0$ explicitly,
\begin{equation}\label{eq:Guarantee1}
f(X_t) - f(x^\ast) \leq \frac{D_h(x^\ast, X_0 + e^{-\alpha_0} \dot X_0) + e^{\beta_0}(f(X_0) - f(x^\ast))}{e^{\beta_t}}
\end{equation}
for~\eqref{eq:Lyap1}, and 
\begin{equation}
f(X_t) - f(x^\ast) \leq \frac{e^{\beta_0}(\mu D_h(x^\ast ,X_0 +e^{-\alpha_0} \dot X_0) + f(X_0) - f(x^\ast))}{e^{\beta_t}},
\end{equation} 
for~\eqref{eq:Lyap2}, allows us to infer a $O(e^{-\beta_t})$ convergence rate for the function value for both families of dynamics~\eqref{eq:el1} and~\eqref{eq:el2}.

So far, we have introduced two families of dynamics~\eqref{eq:el1} and \eqref{eq:el2}
and illustrated how to derive Lyapunov functions for these dynamics which certify a 
convergence rate to the minimum of an objective function $f$ under suitable smoothness 
conditions on $f$ and $h$. Next, we will discuss how various discretizations of 
dynamics~\eqref{eq:el1} and \eqref{eq:el2} produce algorithms which are useful for 
convex optimization. A similar discretization of the Lyapunov functions~\eqref{eq:Lyap1} and \eqref{eq:Lyap2} will provide us with tools we can use to analyze these algorithms. 
We defer discussion of additional mathematical properties of the dynamics that we 
introduce---such as existence and uniqueness---to Appendix~\ref{Sec:Exist}.

\section{Discretization Analysis}
\label{Sec:Disc}
In this section, we illustrate how to map from continuous-time dynamics to discrete-time sequences. We assume throughout this section that the second ideal scaling~\eqref{Eq:IdeScaBet} holds with equality, $\dot \beta_t = e^{\alpha_t}$.
\paragraph{Explicit and implicit methods.}
Consider a general vector field $\dot X_t = v(X_t)$, where $v:\mathbb{R}^n \rightarrow \mathbb{R}^n$ is smooth. The explicit Euler method evaluates the vector field at the current point to determine a discrete-time step
\begin{align*}
\frac{x_{k+1} - x_k}{\delta} = \frac{X_{t+\delta} - X_t}{\delta} = v(X_t) = v(x_k).
\end{align*}
The implicit Euler method, on the other hand, evaluates the vector field at the future point
\begin{align*}
\frac{x_{k+1} - x_k}{\delta} = \frac{X_{t+\delta} - X_t}{\delta} = v(X_{t+\delta}) = v(x_{k+1}).
\end{align*}
An advantage of the explicit Euler method is that it is easier to implement in practice. The implicit Euler method has greater stability and convergence properties but requires solving an expensive implicit equation.
We evaluate what happens when we apply these discretization techniques to both families of dynamics~\eqref{eq:el1} and~\eqref{eq:el2}. To do so, we write these dynamics as systems 
of first-order equations. The implicit and explicit Euler method can be combined in four separate ways to obtain algorithms we can analyze; for both families, we provide results on several combinations of the explicit and implicit methods, focusing on the family that gives rise to accelerated methods. 

\subsection{Methods arising from the first Euler-Lagrange equation}
We apply the implicit and explicit Euler schemes to dynamics \eqref{eq:el1}, 
written as the following system of first-order equations:
 \begin{subequations}\label{Eq:EL}
 \begin{align}
 Z_t &= X_t + \frac{e^{\beta_t}}{\frac{d}{dt} e^{\beta_t}} \dot X_t \label{Eq:ELZ},\\
 \frac{d}{dt}\nabla h(Z_t) &= -\left(\frac{d}{dt} e^{\beta_t} \right)\nabla f(X_t)\label{Eq:ELH}.
 \end{align}
 \end{subequations}
Wibisono, Wilson and Jordan showed that the polynomial family $\beta_t = p\log t$ 
is the continuous-time limit of a family of accelerated disrete-time methods~\cite{Acceleration},  Here, we consider any parameter $\beta_t$ whose time derivative $\frac{d}{dt} e^{\beta_t} = (A_{k+1} - A_k)/\delta$ can be well-approximated by a discrete-time sequence $(A_i)_{i=1}^k$. The advantage of choosing an arbitrary time scaling $\delta$ is that it leads to a broad family of algorithms. To illustrate this, make the approximations $Z_t = z_k$, $X_t = x_k$, $\frac{d}{dt} \nabla h(Z_t) = \frac{\nabla h(z_{k+1}) - \nabla h(z_k)}{\delta}$, $\dot  X_t = \frac{d}{dt} X_t = \frac{x_{k+1} - x_k}{\delta}$, and denote $\tau_k = \frac{A_{k+1} - A_k}{A_k}:= \frac{\alpha_k}{A_k}$, so that $\frac{e^{\beta_t}}{\frac{d}{dt} e^{\beta_t}} = \delta/\tau_k$. With these approximations, we explore various combinations of the explicit and implicit discretizations. 
 %\end{subequations}
\paragraph{Implicit-Implicit-Euler.}
Written as an algorithm, the implicit Euler method applied to~\eqref{Eq:ELZ} and~\eqref{Eq:ELH} has the following update equations:
\begin{subequations}\label{Eq:AlgoForward12}
\begin{align}
z_{k+1} &= \underset{\substack{\,\,\,\,\,\,z\in\X\\ \,\,\,\,\,\,x =  \frac{\tau_{k}}{1+\tau_{k}} z + \frac{1}{1 +\tau_{k}} x_k}}{\text{arg\,\,\,min}} \left\{ A_k f(x) + \frac{1}{\tau_k}D_h\left(z, z_k\right)\right\},\label{eq:hard2}\\
x_{k+1} &= \frac{\tau_{k}}{1 + \tau_{k}}z_{k+1} +\frac{1}{1+\tau_{k}}  x_{k}.%\label{Eq:ZSeq}\\
\end{align}
\end{subequations}
We now state our main proposition for the discrete-time dynamics. 
\begin{proposition}\label{Prop:Forward1}
Using the discrete-time Lyapunov function, 
\begin{align}\label{eq:nlyap}
E_{k} =  D_h(x^\ast, z_k) + A_k(f(x_k) - f(x^\ast)),
\end{align}
the bound
%\begin{align*}
$\frac{E_{k+1} - E_k}{\delta} \leq 0$ holds
%\end{align*}
for algorithm~\eqref{Eq:AlgoForward12}. 
\end{proposition}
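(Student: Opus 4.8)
The plan is to prove the stronger inequality $E_{k+1}\le E_k$ directly, from which the stated bound follows by dividing by $\delta>0$; the argument is the discrete analogue of the proof of Proposition~\ref{Prop:WeakLyap}, with finite differences in place of the time derivative.

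First I would read off the two facts encoded in the implicit update~\eqref{Eq:AlgoForward12}. One is the affine relation $x_{k+1}=\frac{\tau_k}{1+\tau_k}z_{k+1}+\frac{1}{1+\tau_k}x_k$, which I rearrange, using $1+\tau_k=A_{k+1}/A_k$, to $z_{k+1}=x_{k+1}+\frac{1}{\tau_k}(x_{k+1}-x_k)$. The other is the first-order optimality condition for the constrained minimization defining $z_{k+1}$: taking the gradient in $z$ of the objective (via the chain rule through the constraint $x=\frac{\tau_k}{1+\tau_k}z+\frac{1}{1+\tau_k}x_k$, which contributes the factor $\partial x/\partial z=\tau_k/(1+\tau_k)$) and setting it to zero at $z=z_{k+1}$ gives, after simplifying with $\tau_k=\alpha_k/A_k$, the identity $\nabla h(z_{k+1})-\nabla h(z_k)=-\alpha_k\,\nabla f(x_{k+1})$ --- precisely the implicit Euler discretization of~\eqref{Eq:ELH}, and the discrete counterpart of the relation used to differentiate the continuous Lyapunov function.

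Next I would expand
\[
E_{k+1}-E_k=\big(D_h(x^\ast,z_{k+1})-D_h(x^\ast,z_k)\big)+A_{k+1}\big(f(x_{k+1})-f(x^\ast)\big)-A_k\big(f(x_k)-f(x^\ast)\big),
\]
and rewrite the Bregman difference with the three-point identity $D_h(a,c)=D_h(a,b)+D_h(b,c)+\langle\nabla h(b)-\nabla h(c),\,a-b\rangle$ evaluated at $a=x^\ast$, $b=z_{k+1}$, $c=z_k$. This turns the first term into $-D_h(z_{k+1},z_k)+\langle\nabla h(z_k)-\nabla h(z_{k+1}),\,x^\ast-z_{k+1}\rangle$; substituting the optimality identity replaces the inner product by $\alpha_k\langle\nabla f(x_{k+1}),\,x^\ast-z_{k+1}\rangle$. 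Now I substitute $x^\ast-z_{k+1}=(x^\ast-x_{k+1})+\frac{1}{\tau_k}(x_k-x_{k+1})$, use $\alpha_k/\tau_k=A_k$, and bound each inner product by convexity of $f$: $\langle\nabla f(x_{k+1}),x^\ast-x_{k+1}\rangle\le f(x^\ast)-f(x_{k+1})$ and $\langle\nabla f(x_{k+1}),x_k-x_{k+1}\rangle\le f(x_k)-f(x_{k+1})$. Collecting terms, the coefficients of $f(x^\ast)$, $f(x_k)$ and $f(x_{k+1})$ come out to $\alpha_k-A_{k+1}+A_k$, $\ A_k-A_k$, and $A_{k+1}-A_k-\alpha_k$, all zero since $\alpha_k=A_{k+1}-A_k$; what remains is $E_{k+1}-E_k\le-D_h(z_{k+1},z_k)\le 0$ by convexity of $h$.

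The main obstacle is the first step: correctly interpreting the constrained $\arg\min$ in~\eqref{Eq:AlgoForward12} and tracking the chain-rule factor so that the scalar multiplying $\nabla f(x_{k+1})$ in the stationarity condition is exactly $\alpha_k$ --- this is what makes it agree with~\eqref{Eq:ELH} and what makes the three function-value coefficients cancel exactly. Everything after that is the same Bregman-divergence plus convexity manipulation as in continuous time, with the telescoping identity $A_{k+1}=A_k+\alpha_k$ doing the bookkeeping. Since $\dot\beta_t=e^{\alpha_t}$ is in force with equality throughout this section, the computation in fact goes through with $x^\ast$ replaced by any $x\in\X$, exactly as in the second half of Proposition~\ref{Prop:WeakLyap}.
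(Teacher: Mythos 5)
Your proposal is correct and follows essentially the same route as the paper's proof: both extract the two variational identities $\nabla h(z_{k+1})-\nabla h(z_k)=-\alpha_k\nabla f(x_{k+1})$ and $z_{k+1}=x_{k+1}+\tfrac{1}{\tau_k}(x_{k+1}-x_k)$, expand the Bregman difference via the three-point identity, substitute, and finish with convexity of $f$ and nonnegativity of $D_h(z_{k+1},z_k)$, with the coefficients of the function values cancelling exactly as you track them. The only caveat is peripheral: the paper simply asserts the variational identities as the definition of the implicit scheme rather than deriving them from the constrained $\arg\min$, and if one literally differentiates the objective in~\eqref{eq:hard2} through the constraint the scalar multiplying $\nabla f(x_{k+1})$ comes out as $\alpha_k^2/A_{k+1}$ rather than $\alpha_k$, a constant discrepancy in the paper's own formulation that does not affect the Lyapunov argument itself.
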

In particular, this allows us to conclude a general $O(1/A_k)$ convergence rate for the implicit method~\eqref{Eq:AlgoForward12}. 
%%The proof of this result can be found in Appendix~\ref{App:PropForward}. 
\begin{proof}
The implicit scheme~\eqref{Eq:AlgoForward12}, with the aforementioned discrete-time approximations, satisfies the following variational inequalities:
\begin{subequations}
\begin{align}
\nabla h(z_{k+1}) - \nabla h(z_k) &= -(A_{k+1}-A_k) \nabla f(x_{k+1})\label{eq:mirfor}\\
( A_{k+1} - A_k)z_{k+1} &= ( A_{k+1} - A_k) x_{k+1} +  A_{k}(x_{k+1} - x_k) .\label{eq:coupfor}
\end{align}
\end{subequations}
Using these identities, we have the following derivation:
\begin{align*}
E_{k+1} - E_k &= D_h(x, z_{k+1})  -  D_h(x, z_{k}) + A_{k+1}(f(x_{k+1}) - f(x)) - A_{k}(f(x_{k}) - f(x))\\
& = - \langle  \nabla h(z_{k+1}) - \nabla h(z_k), x - z_{k+1}\rangle - D_h(z_{k+1}, z_k) \\
&\quad+ A_{k+1}(f(x_{k+1}) - f(x))- A_{k}(f(x_{k}) - f(x))\\
& \overset{\eqref{eq:mirfor}}{=}  (A_{k+1}-A_k)\langle  \nabla f(x_{k+1}), x - z_{k+1}\rangle - D_h(z_{k+1}, z_k) \\
&\quad + A_{k+1}(f(x_{k+1}) - f(x))-A_{k}(f(x_{k}) - f(x))\\
&\overset{\eqref{eq:coupfor}}{=} (A_{k+1}-A_k)\langle  \nabla f(x_{k+1}), x - x_{k+1}\rangle + A_k\langle  \nabla f(x_{k+1}), x_{k} - x_{k+1}\rangle
\\&\quad - D_h(z_{k+1}, z_k)+ A_{k}(f(x_{k+1}) - f(x_k)) +  (A_{k+1} - A_k)(f(x_{k+1}) - f(x))\\
&\leq 0.
\end{align*}
 The inequality on the last line follows from the convexity of $f$ and the strict convexity of $h$. 
\end{proof}
%For the remainder of this section,  we focus on analyzing discretizations that contain easier subproblems. 
\paragraph{Accelerated gradient family.}
We study families of algorithms which give rise to a family of accelerated methods. These methods can be thought of variations of the explicit Euler scheme applied to~\eqref{Eq:ELZ} and the implicit Euler scheme applied to~\eqref{Eq:ELH}.\footnote{Here we make the identification $\tau_k = A_{k+1} - A_k/A_{k+1}:= \alpha_k/A_{k+1}$.} The first family of methods can be written as the following general sequence:
\begin{subequations}\label{eq:agd}
\begin{align}
x_{k+1} &= \tau_k z_k + (1- \tau_k) y_k \label{eq:coup}\\
\nabla h(z_{k+1}) &= \nabla h(z_k) - \alpha_k \nabla f(x_{k+1})\label{eq:mir}\\
y_{k+1} &= \mathcal{G}(x)\label{eq:grad},
\end{align}
\end{subequations}
where $\mathcal{G}:\mathcal{X} \rightarrow \mathcal{X}$ is an arbitrary map whose domain is the previous state, $x = (x_{k+1}, z_{k+1}, y_k)$.  The second family can be written: 
\begin{subequations}\label{eq:agd2}
\begin{align}
x_{k+1} &= \tau_k z_k + (1- \tau_k) y_k \label{eq:coup1}\\
y_{k+1} &= \mathcal{G}(x)\label{eq:grad1}\\
\nabla h(z_{k+1}) &= \nabla h(z_k) - \alpha_k \nabla f(y_{k+1}),\label{eq:mir1}
\end{align}
\end{subequations}
where $\mathcal{G}:\mathcal{X} \rightarrow \mathcal{X}$ is an arbitrary map whose domain is the previous state, $x = (x_{k+1}, z_{k}, y_k)$. When $\mathcal{G}(x) = x_{k+1}$ for either algorithm, we recover a classical explicit discretization applied to \eqref{Eq:ELZ} and implicit discretization applied to \eqref{Eq:ELH}.  We will show that the additional sequence $y_{k}$ allows us to obtain better error bounds in our Lyapunov analysis. Indeed,
we will show that accelerated gradient descent~\cite{Nesterov04,Nesterov05}, 
accelerated higher-order methods~\cite{Nesterov08,Baes09}, accelerated universal 
methods~\cite{Nesterov17}, accelerated proximal 
methods~\cite{Tseng08,BeckTeboulle09,Nesterov13} all involve particular
choices for the map $\mathcal{G}$ and for the smoothness assumptions on 
$f$ and $h$. Furthermore, we demonstrate how the analyses contained in 
all of these papers implicitly show the following discrete-time Lyapunov 
function,
\begin{align}\label{eq:lyap}
E_k = D_h(x^\ast, z_k) + A_k (f(y_k) - f(x^\ast)), 
\end{align}
is decreasing for each iteration $k$. To show this, we begin with the following 
proposition.
\begin{proposition} \label{Prop:WeakBound} Assume that the distance-generating 
function $h$ is $\sigma$-uniformly convex with respect to the $p$-th power of 
the norm $(p\geq 2)$ \eqref{eq:unif2} and the objective function $f$ is convex. 
Using only the updates \eqref{eq:coup} and \eqref{eq:mir}, and using the Lyapunov 
function~\eqref{eq:lyap}, we have the following bound:  
\begin{align}\label{eq:bound}
\frac{E_{k+1} - E_k}{\delta} \leq \varepsilon_{k+1},
\end{align}
where the error term scales as
\begin{subequations}\label{eq:error}
\begin{align}\label{eq:err}
\varepsilon_{k+1} =  \frac{p-1}{p}\sigma^{-\frac{1}{p-1}}\frac{(A_{k+1} - A_k)^{\frac{p}{p-1}}}{\delta}\|\nabla f(x_{k+1})\|^{\frac{p}{p-1}}   + \frac{A_{k+1}}{\delta}(f(y_{k+1}) -f(x_{k+1})). 
\end{align}
If we use the updates  \eqref{eq:coup1} and \eqref{eq:mir1} instead, the error term 
scales as
\begin{align}\label{eq:err1}
\varepsilon_{k+1} =  \frac{p-1}{p}\sigma^{-\frac{1}{p-1}}\frac{(A_{k+1} - A_k)^{\frac{p}{p-1}}}{\delta}\|\nabla f(y_{k+1})\|^{\frac{p}{p-1}}   + \frac{A_{k+1}}{\delta}\langle \nabla f(y_{k+1}), y_{k+1} - x_{k+1}\rangle. 
\end{align}
\end{subequations} 
\end{proposition}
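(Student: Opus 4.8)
The plan is to mimic the telescoping computation in the proof of Proposition~\ref{Prop:Forward1}, but now carrying the auxiliary sequence $y_k$ along and paying for the \emph{explicit} (rather than implicit) evaluation of the vector field through the error term. I would begin from the three-point identity for the Bregman divergence,
$$D_h(x^\ast, z_{k+1}) - D_h(x^\ast, z_k) = -\langle \nabla h(z_{k+1}) - \nabla h(z_k),\, x^\ast - z_{k+1}\rangle - D_h(z_{k+1}, z_k),$$
and substitute the mirror update \eqref{eq:mir} (resp.\ \eqref{eq:mir1}), so that the first term becomes $\alpha_k\langle \nabla f(x_{k+1}),\, x^\ast - z_{k+1}\rangle$ (resp.\ $\alpha_k\langle \nabla f(y_{k+1}),\, x^\ast - z_{k+1}\rangle$), where $\alpha_k = A_{k+1}-A_k$.

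The core of the argument is to split $x^\ast - z_{k+1} = (x^\ast - x_{k+1}) + (x_{k+1} - z_k) + (z_k - z_{k+1})$ and bound the three resulting inner products separately. For the first two I use convexity of $f$ together with the coupling identity coming from \eqref{eq:coup}: since $\tau_k = \alpha_k/A_{k+1}$ forces $1-\tau_k = A_k/A_{k+1}$, rearranging \eqref{eq:coup} gives $\alpha_k(z_k - x_{k+1}) = A_k(x_{k+1}-y_k)$. Hence $\alpha_k\langle \nabla f(x_{k+1}),\, x_{k+1}-z_k\rangle = A_k\langle \nabla f(x_{k+1}),\, y_k - x_{k+1}\rangle \le A_k(f(y_k)-f(x_{k+1}))$, while $\alpha_k\langle \nabla f(x_{k+1}),\, x^\ast - x_{k+1}\rangle \le \alpha_k(f(x^\ast) - f(x_{k+1}))$. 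For the third piece I retain $-D_h(z_{k+1},z_k)$ and combine it with $-\alpha_k\langle \nabla f(x_{k+1}),\, z_{k+1}-z_k\rangle \le \alpha_k\|\nabla f(x_{k+1})\|\,\|z_{k+1}-z_k\|$ (Cauchy--Schwarz/Hölder) using $\sigma$-uniform convexity \eqref{eq:unif2}, i.e.\ $D_h(z_{k+1},z_k) \ge \tfrac{\sigma}{p}\|z_{k+1}-z_k\|^p$; the elementary Fenchel--Young inequality $ab - \tfrac{\sigma}{p}b^p \le \tfrac{p-1}{p}\sigma^{-1/(p-1)}a^{p/(p-1)}$ with $a = \alpha_k\|\nabla f(x_{k+1})\|$ then yields exactly the first summand of $\varepsilon_{k+1}$ in \eqref{eq:err}.

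Assembling these bounds into $E_{k+1}-E_k$ for the Lyapunov function \eqref{eq:lyap}, and adding the remaining difference $A_{k+1}(f(y_{k+1})-f(x^\ast)) - A_k(f(y_k)-f(x^\ast))$, I would verify that the $f(x^\ast)$ terms cancel because $\alpha_k - A_{k+1} + A_k = 0$, the $f(y_k)$ terms cancel, and the $f(x_{k+1})$ terms combine to $-(\alpha_k + A_k)f(x_{k+1}) = -A_{k+1}f(x_{k+1})$, leaving precisely $A_{k+1}(f(y_{k+1}) - f(x_{k+1}))$ plus the gradient-norm term; dividing by $\delta$ gives \eqref{eq:err}. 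For the second family \eqref{eq:agd2} the only change is that $\nabla f$ is evaluated at $y_{k+1}$, so in both the $x^\ast - x_{k+1}$ and $y_k - x_{k+1}$ estimates I insert $\pm y_{k+1}$ and use $\langle \nabla f(y_{k+1}),\, u - y_{k+1}\rangle \le f(u) - f(y_{k+1})$; the two leftover cross terms $\langle \nabla f(y_{k+1}),\, y_{k+1}-x_{k+1}\rangle$ carry coefficients $\alpha_k$ and $A_k$, which add to $A_{k+1}$, reproducing \eqref{eq:err1}.

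I expect the only real obstacle to be bookkeeping: ensuring that every $f$-value other than $f(y_{k+1})$ and $f(x_{k+1})$ cancels in the telescoping sum, which hinges on using $\tau_k = \alpha_k/A_{k+1}$ consistently and on the identity $\alpha_k = A_{k+1}-A_k$. The one genuinely non-routine inequality is the Fenchel--Young/duality step that converts $\|\nabla f\|\,\|z_{k+1}-z_k\| - \tfrac{\sigma}{p}\|z_{k+1}-z_k\|^p$ into a pure $\|\nabla f\|^{p/(p-1)}$ term; this is where the $\sigma$-uniform convexity hypothesis \eqref{eq:unif2} enters, and it is what dictates the exponents and constants appearing in \eqref{eq:error}.
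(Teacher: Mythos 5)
Your proposal is correct and follows essentially the same route as the paper's proof in Appendix~\ref{App:WeakBound}: three-point identity, substitution of the mirror step, Fenchel--Young against the $\sigma$-uniform convexity term, and the coupling identity from \eqref{eq:coup} to make all $f$-values except $f(y_{k+1})$ and $f(x_{k+1})$ (resp.\ the $\langle\nabla f(y_{k+1}), y_{k+1}-x_{k+1}\rangle$ terms) telescope. The only cosmetic difference is your decomposition $x^\ast - z_{k+1} = (x^\ast - x_{k+1}) + (x_{k+1}-z_k) + (z_k - z_{k+1})$ versus the paper's two-stage split through $x^\ast - z_k$ and then $x^\ast - y_k$; the resulting cancellations are identical.
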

The error bounds in~\eqref{eq:error} were obtained using no smoothness assumption 
on $f$ and $h$; they also hold when full gradients of $f$ are replaced with elements 
in the subgradient of $f$. The proof of this proposition can be found in 
Appendix~\ref{App:WeakBound}. 
The bounds in Proposition~\ref{Prop:WeakBound} were obtained {\em without} using the arbitrary update $y_{k+1} = \mathcal{G}(x)$. In particular, accelerated methods are obtained by picking a map $\mathcal{G}$ that results in a better bound on the error than the straightforward discretization $y_{k+1} = x_{k+1}$. We immediately see that any algorithm for which the map  $\mathcal{G}$ satisfies the progress condition $f(y_{k+1}) - f(x_{k+1}) \propto - \|\nabla f(x_{k+1})\|^{\frac{p}{p-1}}$ or $\langle \nabla f(y_{k+1}), y_{k+1} - x_{k+1}\rangle \propto - \|\nabla f(y_{k+1})\|^{\frac{p}{p-1}}$ will have a $O(1/\epsilon \sigma k^p)$ convergence rate. We now show how this general analysis applied concretely to each of
the aforementioned five methods. 
%%%
%%%
%%%
\paragraph{\bf Quasi-monotone method~\cite{Nesterov15}.}  The quasi-monotone subgradient method, which uses the map 
\begin{align*}
\mathcal{G}(x) = x_{k+1}
\end{align*} 
for both algorithms \eqref{eq:agd} and \eqref{eq:agd2}, was introduced by Nesterov in 2015.% As previously mentioned, both algorithms are now equivalent to a standard 
% discretization of dynamics~\eqref{Eq:EL}. 
Under this map, assuming the strong convexity of $h$ (which implies $p=2$), 
we can write the error~\eqref{eq:error} as
\begin{align}\label{eq:quas_bound}
\varepsilon_{k+1} = \frac{(A_{k+1} - A_k)^2}{2\sigma\delta}\|\nabla f(x_{k+1})\|^2. 
\end{align}
If we assume all the (sub)gradients of $f$ are upper bounded in norm, then maximizing $\sum_{i=1}^k \varepsilon_i/A_i$ results in an $O(1/\sqrt{k})$ convergence rate. This matches the lower bound for (sub)gradient methods designed for Lipschitz-convex functions.\footnote{The same convergence bound can be shown to hold for the (sub)gradient method under this smoothness class, when one assesses convergence for the average/minimum iterate~\cite{Nesterov04}.} 
\paragraph{\bf Accelerated gradient/mirror descent~\cite{Nesterov04,Nesterov05}.} In 1983, Nesterov introduced accelerated gradient decent, which uses the following family of operators $\mathcal{G} \equiv \mathcal{G}_\epsilon$, parameterized by a scaling constant $\epsilon>0$:
\begin{align}\label{eq:gradmap}
\mathcal{G}_\epsilon (x) = \arg \min_{y \in \mathcal{X}} \left\{ f(x) + \langle \nabla f(x), y-x\rangle + \frac{1}{2\epsilon}\|y-x\|^2 \right\}.
\end{align} 
Nesterov assumed the use of full gradients $\nabla f$ which are $(1/\epsilon)$-smooth; thus, the gradient map is scaled according to the Lipschitz parameter.
\begin{lemma} \label{eq:prop1} 
Assume $h$ is $\sigma$-strongly convex and $f$ is $(1/\epsilon)$-smooth. 
Using the gradient update, $y_{k+1} = \mathcal{G}_\epsilon (x_{k+1})$, for 
updates ~\eqref{eq:grad} and~\eqref{eq:grad1}, where $\mathcal{G}_\epsilon$ 
is defined in ~\eqref{eq:gradmap}, 
the error for algorithm \eqref{eq:agd} 
can be written as follows:
\begin{subequations}\label{eq:errr}
\begin{align}\label{eq:errra}
\varepsilon_{k+1} = \frac{(A_{k+1} - A_k)^2}{2\sigma\delta}\|\nabla f(x_{k+1})\|^2  -\frac{ \epsilon A_{k+1}}{2\delta}\|\nabla f(x_{k+1})\|^2,
\end{align}
and for algorithm \eqref{eq:agd2}, we have: 
\begin{align}\label{eq:errrb}
\varepsilon_{k+1} =  \frac{(A_{k+1} - A_k)^2}{2\sigma\delta}\|\nabla f(y_{k+1})\|^2 -\frac{ \epsilon A_{k+1}}{2\delta}\|\nabla f(y_{k+1})\|^2. 
\end{align}
\end{subequations}
\end{lemma}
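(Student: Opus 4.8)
The plan is to substitute the two general error expressions of Proposition~\ref{Prop:WeakBound} into the concrete form of the gradient map. Because $\sigma$-strong convexity of $h$ is exactly $\sigma$-uniform convexity with respect to the square of the norm, we take $p=2$ in~\eqref{eq:error}; and because we are in the Euclidean setting, the minimizer in~\eqref{eq:gradmap} is explicit, $y_{k+1} = \mathcal{G}_\epsilon(x_{k+1}) = x_{k+1} - \epsilon\,\nabla f(x_{k+1})$, so that $y_{k+1} - x_{k+1} = -\epsilon\,\nabla f(x_{k+1})$. Everything then reduces to controlling the ``progress'' term appearing in each error expression.

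For algorithm~\eqref{eq:agd} I would start from~\eqref{eq:err} specialized to $p=2$, namely $\varepsilon_{k+1} = \frac{(A_{k+1}-A_k)^2}{2\sigma\delta}\|\nabla f(x_{k+1})\|^2 + \frac{A_{k+1}}{\delta}(f(y_{k+1}) - f(x_{k+1}))$, and bound the progress term by the descent lemma for an $(1/\epsilon)$-smooth function, $f(y_{k+1}) \le f(x_{k+1}) + \langle \nabla f(x_{k+1}), y_{k+1}-x_{k+1}\rangle + \frac{1}{2\epsilon}\|y_{k+1}-x_{k+1}\|^2$. Substituting $y_{k+1}-x_{k+1} = -\epsilon\,\nabla f(x_{k+1})$ collapses the right-hand side to $f(x_{k+1}) - \frac{\epsilon}{2}\|\nabla f(x_{k+1})\|^2$, i.e.\ $f(y_{k+1}) - f(x_{k+1}) \le -\frac{\epsilon}{2}\|\nabla f(x_{k+1})\|^2$, which produces exactly~\eqref{eq:errra} (where the stated equality should be read as the resulting upper bound on $(E_{k+1}-E_k)/\delta$, since the progress term is only being bounded, not reproduced).

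For algorithm~\eqref{eq:agd2} I would start from~\eqref{eq:err1} with $p=2$ and rewrite its inner-product term, using the same substitution, as $\langle \nabla f(y_{k+1}), y_{k+1}-x_{k+1}\rangle = -\epsilon \langle \nabla f(y_{k+1}), \nabla f(x_{k+1})\rangle$. The key step is the elementary identity $\langle \nabla f(y_{k+1}), \nabla f(x_{k+1})\rangle - \frac{1}{2}\|\nabla f(y_{k+1})\|^2 = \frac{1}{2}\|\nabla f(x_{k+1})\|^2 - \frac{1}{2}\|\nabla f(y_{k+1}) - \nabla f(x_{k+1})\|^2$, whose right-hand side is nonnegative because $(1/\epsilon)$-Lipschitzness of $\nabla f$ together with the substitution gives $\|\nabla f(y_{k+1}) - \nabla f(x_{k+1})\| \le \frac{1}{\epsilon}\|y_{k+1}-x_{k+1}\| = \|\nabla f(x_{k+1})\|$. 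Hence $\langle \nabla f(y_{k+1}), y_{k+1}-x_{k+1}\rangle \le -\frac{\epsilon}{2}\|\nabla f(y_{k+1})\|^2$, and inserting this into~\eqref{eq:err1} yields~\eqref{eq:errrb}.

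The~\eqref{eq:agd2} case is the only place that needs some care: Lipschitzness actually gives the stronger bound $\langle \nabla f(y_{k+1}), y_{k+1}-x_{k+1}\rangle \le -\epsilon\|\nabla f(y_{k+1})\|^2$, but one states it with the factor $\frac{\epsilon}{2}$ so that~\eqref{eq:errra} and~\eqref{eq:errrb} have the same shape, which is what the downstream rate calculation consumes. It is also worth noting that when $\X \subsetneq \R^d$ the map $\mathcal{G}_\epsilon$ is a projected gradient step and $y_{k+1}-x_{k+1}$ is a negative multiple of the associated gradient mapping rather than of $\nabla f(x_{k+1})$; the two inequalities above then hold verbatim with $\nabla f(x_{k+1})$ replaced by that gradient mapping, using its standard properties.
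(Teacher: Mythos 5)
Your proof is correct and follows essentially the same route as the paper: the descent lemma with $y_{k+1}-x_{k+1}=-\epsilon\nabla f(x_{k+1})$ for~\eqref{eq:errra}, and squaring the Lipschitz inequality $\|\nabla f(y_{k+1})-\nabla f(x_{k+1})\|\le\frac{1}{\epsilon}\|y_{k+1}-x_{k+1}\|$ (your polarization identity is exactly the paper's ``expand the square'' step) for~\eqref{eq:errrb}. One caveat on your closing aside: the stronger bound $\langle\nabla f(y_{k+1}),y_{k+1}-x_{k+1}\rangle\le-\epsilon\|\nabla f(y_{k+1})\|^2$ does \emph{not} follow from Lipschitzness alone (take $\nabla f(x_{k+1})=(1,0)$ and $\nabla f(y_{k+1})=(1.5,0.5)$, which satisfy the Lipschitz constraint but violate $\langle a,b\rangle\ge\|b\|^2$); it requires co-coercivity, i.e.\ convexity as well --- but since your argument only uses the $\epsilon/2$ version, this does not affect the proof.
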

\begin{proof}
The optimality condition for the gradient update~\eqref{eq:gradmap} is 
 \begin{align}\label{eq:one}
 \nabla f(x) = \frac{1}{\epsilon}(x -\mathcal{G}_\epsilon(x) ).
 \end{align}
The bound~\eqref{eq:errra} follows %we must show the gradient map $\mathcal{G}_\epsilon(x) = x - \epsilon \nabla f(x)$ satisfies
%\begin{align*}
%f(\mathcal{G}_\epsilon(x)) - f(x) \leq - \frac{\epsilon}{2}\|\nabla f(x)\|^2.
%\end{align*}
 %The argument uses 
 from smoothness of the objective function $f$,
\begin{align*}
f(\mathcal{G}_{\epsilon}(x)) &\leq  f(x) + \langle \nabla f(x), \mathcal{G}_{\epsilon}(x)- x\rangle +  \frac{1}{2\epsilon}\| \mathcal{G}_{\epsilon}(x) - x\|^2\\
%&= f(x) - \epsilon\|\nabla f(x)\|^2 + \frac{\epsilon}{2} \|\nabla f(x)\|^2\\
&\overset{\eqref{eq:one}}{=}f(x) - \frac{\epsilon}{2}\|\nabla f(x)\|^2.
\end{align*}
For the second bound~\eqref{eq:errrb}, %we must show the gradient map satisfies
%\begin{align}\label{eq:second}
%f(\mathcal{G}_\epsilon(x)) - f(x) \leq - \frac{\epsilon}{2}\|\nabla f(\mathcal{G}_\epsilon(x))\|^2
%\end{align}
we use the $(1/\epsilon)$-smoothness of the gradient, 
 \begin{align}\label{eq:two}
 \|\nabla f(\mathcal{G}_\epsilon(x)) - \nabla f(x)\|\leq \frac{1}{\epsilon}\|\mathcal{G}_\epsilon(x)-x\|;
 \end{align}
substituting \eqref{eq:one} into \eqref{eq:two}, squaring both sides, and expanding the square on the left-hand side, yields the desired bound:
 \begin{align*}
 \langle \nabla f(\mathcal{G}_\epsilon(x)), x - \mathcal{G}_\epsilon(x))\rangle \leq -\frac{\epsilon}{2}\|\nabla f(\mathcal{G}_\epsilon(x))\|^2.
\end{align*}
%Using the convexity of $f$ to lower bound the left-hand side by the function value leads to the desired bound.
\end{proof}
The error bounds we have just obtained depend explicitly on the scaling $\epsilon$. 
This restricts our choice of sequences $A_k$; they must satisfy the following inequality:
\begin{align}\label{eq:gradbound}
\frac{(A_{k+1} - A_k)^2}{A_{k+1}} \leq\epsilon\sigma,
\end{align} 
for the error to be bounded. 
Choosing $A_k$ to be a polynomial in $k$ of degree two, with leading coefficients 
$\epsilon\sigma$, optimizes the bound \eqref{eq:gradbound}; from this we can conclude $f(y_k) - f(x^\ast) \leq O(1/\epsilon\sigma k^2)$, which matches the lower bound for algorithms which only use full gradients of the objective function. Furthermore, if we take the discretization step to scale according to the smoothness as $\delta = \sqrt{\epsilon}$, then both $\|x_k - y_k\| = O(\sqrt{\epsilon})$ and  $\varepsilon_k = O(\sqrt{\epsilon})$; therefore, as $\sqrt{\epsilon} \rightarrow 0$, we recover the  dynamics \eqref{Eq:EL} and the statement $\dot{\mathcal{E}}_t \leq 0$ for Lyapunov function \eqref{eq:el1} in the limit.

\paragraph{\bf Accelerated universal methods~\cite{Nesterov08,Baes09,Nesterov14,Nesterov17}.}\label{Accgrad}
The term ``universal methods'' refers to the algorithms designed for the class of functions with $(\epsilon,\nu)$-H\"older-continuous higher-order gradients ($2 \leq p \in \mathbb{N}$, $\nu \in (0,1]$, $\epsilon >0$),
\begin{equation}\label{eq:Hold}
\|\nabla^{p-1} f(x) - \nabla^{p-1} f(y)\| \leq \frac{1}{\epsilon}\|x - y\|^\nu.
\end{equation}
Typically, practitioners care about the setting where we have H\"older-continuous gradients ($p=2$) or H\"older-continuous Hessians ($p=3$), since methods which use higher-order information are often too computationally expensive. In the case $p\geq 3$, the gradient update
\begin{align}\label{eq:taylor3}
\mathcal{G}_{\epsilon, p, \nu,N}(x) = \arg \min_{y\in \X} \left\{f_{p-1}(x; y) + \frac{N}{\epsilon \tilde p}\|x - y\|^{\tilde p}\right\},\quad \tilde p = p-1+\nu, \, N >1
\end{align}
can be used to simplify the error~\eqref{eq:err1} obtained by algorithm~\eqref{eq:agd2}. Notice, the gradient update is regularized by the smoothness parameter $\tilde p$. We summarize this result in the following proposition. 
\begin{lemma}\label{prop:Hold}
Assume $f$ has H\"older-continuous higher-order gradients. Using the map
 $y_{k+1} = \mathcal{G}_{\epsilon, p, \nu,N}(x_{k+1})$, defined by \eqref{eq:taylor3}, 
in update~\eqref{eq:grad1} yields the following progress condition:
\begin{align}\label{eq:progcond}
\langle \nabla f(y_{k+1}), y_{k+1} - x_{k+1}\rangle  \leq -\frac{ (N^2 -1)^{\frac{\tilde p-1}{2\tilde p-2}}}{2N}\epsilon^{\frac{1}{\tilde p-1}}\ \|\nabla f(y_{k+1})\|^{\frac{\tilde p}{\tilde p-1}},
\end{align}
where  $\tilde p = p-1+\nu$ and $p \geq 3$.
%with $p \geq 2$, $x_{k+1} = x$, and $y_{k+1} = \mathcal{G}_{\epsilon, p, \tilde p,N}(x_{k+1})$.
\end{lemma}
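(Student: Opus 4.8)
The plan is to combine the first-order optimality condition of the regularized Taylor subproblem defining $\mathcal{G}_{\epsilon,p,\nu,N}$ with a Taylor-remainder estimate coming from the H\"older assumption~\eqref{eq:Hold}, and then to clear the step length by a scaling argument. Throughout, abbreviate $x = x_{k+1}$, $y = y_{k+1} = \mathcal{G}_{\epsilon,p,\nu,N}(x)$, $r = \|y - x\|$, and recall $\tilde p = p - 1 + \nu$ with $p \geq 3$. Writing $\nabla_y$ for the gradient in the second argument and using $\nabla_y\!\left(\tfrac{1}{\tilde p}\|y-x\|^{\tilde p}\right) = \|y-x\|^{\tilde p - 2}(y - x)$, the minimizer $y$ in~\eqref{eq:taylor3} obeys the stationarity condition
\begin{align*}
\nabla_y f_{p-1}(x; y) = -\frac{N}{\epsilon}\, r^{\tilde p - 2}\,(y - x);
\end{align*}
on a constrained $\X$ this becomes a variational inequality, but only this equality form is used in what follows.

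The next ingredient is the Taylor-remainder bound. Since $\nabla_y f_{p-1}(x;\cdot)$ is precisely the degree-$(p-2)$ Taylor polynomial of $\nabla f$ about $x$, the integral form of Taylor's theorem, together with~\eqref{eq:Hold}, gives
\begin{align*}
\bigl\| \nabla f(y) - \nabla_y f_{p-1}(x; y) \bigr\| \leq \frac{1}{\epsilon}\, r^{\tilde p - 1},
\end{align*}
where a Beta-function factor of size at most $1$ has been bounded crudely by $1$; this normalization is the only place the higher-order ($p \geq 3$) Taylor coefficients enter the argument. Substituting the stationarity condition then yields $\bigl\| \nabla f(y) + \tfrac{N}{\epsilon} r^{\tilde p - 2}(y - x) \bigr\| \leq \tfrac{1}{\epsilon} r^{\tilde p - 1}$.

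What remains is algebra. Squaring the last inequality and using $\|y - x\|^2 = r^2$ gives
\begin{align*}
\|\nabla f(y)\|^2 + \frac{2N}{\epsilon}\, r^{\tilde p - 2}\, \langle \nabla f(y), y - x\rangle + \frac{N^2 - 1}{\epsilon^2}\, r^{2\tilde p - 2} \leq 0 .
\end{align*}
Solving for the inner product bounds it above by $-\tfrac{\epsilon}{2N}\,r^{-(\tilde p - 2)}\|\nabla f(y)\|^2 - \tfrac{N^2 - 1}{2N\epsilon}\,r^{\tilde p}$, a sum of two nonpositive terms (using $N > 1$ and $r > 0$; if $r = 0$ then $\nabla f(x) = 0$ and the bound is trivial). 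Applying the arithmetic--geometric mean inequality $A + B \geq 2\sqrt{AB}$ to these two terms cancels the $r^{\tilde p - 2}$ and leaves $\langle \nabla f(y), y - x\rangle \leq -\tfrac{\sqrt{N^2 - 1}}{N}\,\|\nabla f(y)\|\,r$. Finally, Cauchy--Schwarz on the middle term of the squared inequality forces $(N-1)\tfrac{1}{\epsilon} r^{\tilde p - 1} \leq \|\nabla f(y)\| \leq (N+1)\tfrac{1}{\epsilon} r^{\tilde p - 1}$, so $r^{\tilde p - 1} \geq \epsilon\,\|\nabla f(y)\|/(N+1)$; substituting this lower bound on $r$ and collecting the $\epsilon$-factors delivers~\eqref{eq:progcond} (note $\tfrac{\tilde p - 1}{2\tilde p - 2} = \tfrac12$, so the target constant is $\tfrac{\sqrt{N^2-1}}{2N}\epsilon^{1/(\tilde p - 1)}$), the exponent $\tfrac{\tilde p}{\tilde p - 1}$ emerging from $r^{\tilde p} = r \cdot r^{\tilde p - 1}$.

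The main obstacle is entirely in the constant. The qualitative shape $\langle \nabla f(y), y - x\rangle \leq -\,c\,\|\nabla f(y)\|^{\tilde p/(\tilde p - 1)}$ is forced by the scalings $\|\nabla f(y)\|$ of order $r^{\tilde p - 1}$ and $\langle \nabla f(y), y - x\rangle$ of order $r^{\tilde p}$ that the stationarity condition imposes; the delicate point is matching $c$ to exactly $\tfrac{\sqrt{N^2-1}}{2N}\epsilon^{1/(\tilde p-1)}$, which needs the right AM--GM weights and careful handling of the $(N+1)^{1/(\tilde p - 1)}$ factor, and implicitly confines $N$ to the range in which the Taylor-plus-regularizer model in~\eqref{eq:taylor3} is convex (so that its minimizer, and hence the stationarity condition, is legitimate). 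I expect everything else to be routine bookkeeping.
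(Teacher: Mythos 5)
Your first three steps---the stationarity condition for~\eqref{eq:taylor3}, the H\"older--Taylor remainder bound $\|\nabla f(y)-\nabla_y f_{p-1}(x;y)\|\leq \tfrac{1}{\epsilon}r^{\tilde p-1}$, and the squared inequality
$\|\nabla f(y)\|^2+\tfrac{2N}{\epsilon}r^{\tilde p-2}\langle\nabla f(y),y-x\rangle+\tfrac{N^2-1}{\epsilon^2}r^{2\tilde p-2}\leq 0$---coincide exactly with the paper's proof in Appendix~\ref{App:Hold}. The divergence, and the gap, is in the last step. You apply the \emph{unweighted} AM--GM to the two nonpositive terms, which leaves a residual factor of $r$, and then eliminate $r$ via the reverse-triangle bound $r^{\tilde p-1}\geq \epsilon\|\nabla f(y)\|/(N+1)$. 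Composing these two estimates yields the constant $\tfrac{\sqrt{N^2-1}}{N\,(N+1)^{1/(\tilde p-1)}}$, whereas the target is $\tfrac{\sqrt{N^2-1}}{2N}$. These agree only when $(N+1)^{1/(\tilde p-1)}\leq 2$, i.e.\ $N\leq 2^{\tilde p-1}-1$; for any larger $N$ (which the lemma permits, since only $N>1$ is assumed) your constant is strictly smaller and the stated inequality~\eqref{eq:progcond} is not established. The loss is structural, not bookkeeping: equal-weight AM--GM is tight when the two terms are equal, while your lower bound on $r$ is tight in a different regime, so the two estimates cannot simultaneously be sharp. You flag this yourself as ``the delicate point,'' but flagging it does not close it.

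The fix is the paper's route: lower-bound $A r^{-(\tilde p-2)}+Br^{\tilde p}$ (with $A=\tfrac{\epsilon}{2N}\|\nabla f(y)\|^2$, $B=\tfrac{N^2-1}{2N\epsilon}$) by its minimum over all $r>0$, equivalently a \emph{weighted} AM--GM with exponents $\tfrac{\tilde p}{2\tilde p-2}$ and $\tfrac{\tilde p-2}{2\tilde p-2}$ chosen so that the powers of $r$ cancel. This makes any separate relation between $r$ and $\|\nabla f(y)\|$ unnecessary and delivers $\tfrac{(N^2-1)^{(\tilde p-2)/(2\tilde p-2)}}{2N}\epsilon^{1/(\tilde p-1)}\|\nabla f(y)\|^{\tilde p/(\tilde p-1)}$ in one line. (Two side remarks: your appeal to ``Cauchy--Schwarz on the middle term'' is really the reverse triangle inequality applied to the unsquared bound, a harmless slip; and the exponent of $N^2-1$ obtained in the paper's appendix, $\tfrac{\tilde p-2}{2\tilde p-2}$, does not match the $\tfrac{\tilde p-1}{2\tilde p-2}=\tfrac12$ appearing in the lemma statement---an inconsistency internal to the paper---but neither of these exponents is what your argument produces.)
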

Lemma~\ref{prop:Hold} demonstrates that if the Taylor approximation is regularized according to the smoothness of the function, the progress condition scales as a function of the smoothness in a particularly nice way. Using this inequality, we can simplify the error~\eqref{eq:err1} in algorithm~\eqref{eq:agd2} to the following,
\begin{align*}
\varepsilon_{k+1} &= \frac{\tilde p - 1}{\tilde p} \sigma^{-\frac{1}{\tilde p-1}}\frac{(A_{k+1} - A_k)^{\frac{\tilde p}{\tilde p-1}}}{\delta}\|\nabla f(y_{k+1})\|^{\frac{\tilde p}{\tilde p-1}}\\
&\quad -  \frac{A_{k+1}}{\delta}\frac{ (N^2 -1)^{\frac{\tilde p-1}{2\tilde p-2}}}{2N}\epsilon^{\frac{1}{\tilde p-1}}\|\nabla f(y_{k+1})\|^{\frac{\tilde p}{\tilde p-1}}, 
\end{align*}
where we have assumed that the geometry scales nicely with the smoothness condition: $D_h(x,y) \geq \frac{\sigma}{\tilde p}\|x - y\|^{\tilde p}$. This requires the condition $p \geq 3$.  To ensure a non-positive error we choose a sequence which satisfies the bound, 
\begin{align*}
\frac{(A_{k+1} - A_k)^{\frac{\tilde p}{\tilde p-1}}}{A_{k+1}} \leq (\epsilon\sigma)^{\frac{1}{\tilde p-1}}\frac{ \tilde p}{\tilde p-1}\frac{(N^2 -1)^{\frac{\tilde p-1}{2\tilde p-2}}}{2 N}:= C_{\epsilon, \sigma,\tilde p,N}.
\end{align*}
This bound is maximized by polynomials in $k$ of degree $\tilde p $ with leading coefficient proportional to $C_{\epsilon, \sigma,\tilde p,N}^{\tilde p -1}$; this results in the convergence rate bound $f(y_k) - f(x^\ast) \leq O(1/\epsilon \sigma k^{\tilde p}) = O(1/\epsilon\sigma k^{p-1+\nu})$. We can compare this convergence rate to that obtained by using just the gradient map $y_{k+1}= \mathcal{G}_{\epsilon, p, \tilde p,N}(y_{k})$; this algorithm yields a slower $f(y_k) - f(x^\ast) \leq O(1/\epsilon\sigma k^{\tilde p -1})= O(1/\epsilon \sigma k^{p-2+\nu})$ convergence rate under the same smoothness assumptions. Proofs of these statements can be found in Appendix~\ref{App:Hold}. 
This result unifies and extends the analyses of the accelerated (universal) cubic regularized Newton's method~\cite{Nesterov08, Nesterov17} and accelerated higher-order methods~\cite{Baes09}. Wibisono et al.~\cite{Acceleration} show that $\|x_k - y_k\| = O(\epsilon^{1/\tilde p})$ and $\varepsilon_k = O(\epsilon^{1/\tilde p})$ so that as $\epsilon^{1/\tilde p}\rightarrow 0$ we recover the  dynamics \eqref{Eq:EL} and the statement $\dot{\mathcal{E}}_t \leq 0$ for Lyapunov function \eqref{eq:el1}.

We end by mentioning that in the special case $p=2$, Nesterov~\cite{Nesterov14} showed that a slightly modified gradient map,
\begin{align}\label{eq:grad4}
\mathcal{G}_{\tilde \epsilon}(x) = x - \tilde \epsilon\, \nabla f(x),
\end{align}
has the following property when applied to functions with H\"older-continuous gradients.
\begin{lemma} \label{lem:Nest}(\cite[Lemma 1]{Nesterov14}) 
Assume $f$ has $(\epsilon,\nu)$-H\"older-continuous gradients, where $\nu \in (0,1]$. Then for $1/\tilde \epsilon \geq (1/2\tilde \delta)^{\frac{1-\nu}{1+\nu}}(1/\epsilon)^{\frac{2}{1+\nu}}$ the following bound: 
\begin{align*}
f(y_{k+1}) - f(x_{k+1}) \leq - \frac{\tilde \epsilon}{2}\|\nabla f(x_{k+1})\|^2 + \tilde \delta, 
\end{align*}
holds for $y_{k+1} = \mathcal{G}_{\tilde{\epsilon}}(x_{k+1})$ given by \eqref{eq:grad4}.
\end{lemma}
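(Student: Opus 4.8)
The plan is to reduce the claim to a one-dimensional inequality in the scalar $g:=\|\nabla f(x_{k+1})\|$ and then close it with Young's inequality. The starting point is the standard Hölder descent inequality implied by \eqref{eq:Hold} in the case $p=2$: for all $x,y\in\X$,
\begin{align*}
f(y)\;\le\; f(x)+\langle \nabla f(x),\,y-x\rangle+\frac{1}{(1+\nu)\epsilon}\,\|y-x\|^{1+\nu}.
\end{align*}
I would prove this by writing $f(y)-f(x)-\langle\nabla f(x),y-x\rangle=\int_0^1\langle\nabla f(x+t(y-x))-\nabla f(x),\,y-x\rangle\,dt$, bounding the integrand via Cauchy--Schwarz and then the Hölder estimate $\|\nabla f(x+t(y-x))-\nabla f(x)\|\le \tfrac1\epsilon\,t^\nu\|y-x\|^\nu$, and using $\int_0^1 t^\nu\,dt=\tfrac1{1+\nu}$.

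Next I would specialize to $x=x_{k+1}$ and $y=y_{k+1}=\mathcal{G}_{\tilde\epsilon}(x_{k+1})=x_{k+1}-\tilde\epsilon\,\nabla f(x_{k+1})$ from \eqref{eq:grad4}. Since $\langle\nabla f(x_{k+1}),y_{k+1}-x_{k+1}\rangle=-\tilde\epsilon g^2$ and $\|y_{k+1}-x_{k+1}\|^{1+\nu}=\tilde\epsilon^{1+\nu}g^{1+\nu}$, the descent inequality becomes
\begin{align*}
f(y_{k+1})-f(x_{k+1})\;\le\;-\tilde\epsilon\,g^2+\frac{\tilde\epsilon^{1+\nu}}{(1+\nu)\epsilon}\,g^{1+\nu}.
\end{align*}
Hence it suffices to establish the scalar bound $\frac{\tilde\epsilon^{1+\nu}}{(1+\nu)\epsilon}\,g^{1+\nu}-\tfrac{\tilde\epsilon}{2}g^2\le\tilde\delta$ for every $g\ge0$.

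To get the scalar bound I would invoke Young's inequality $uv\le \tfrac{(\lambda u)^r}{r}+\tfrac{(v/\lambda)^s}{s}$ with the conjugate pair $r=\tfrac{2}{1+\nu}$, $s=\tfrac{2}{1-\nu}$, taking $u=g^{1+\nu}$ (so $u^r=g^2$) and $v=1$; this is valid for $\nu\in(0,1)$, and the endpoint $\nu=1$ reduces directly to $\tfrac{\tilde\epsilon^2}{2\epsilon}g^2-\tilde\epsilon g^2\le-\tfrac{\tilde\epsilon}{2}g^2$ under $\tilde\epsilon\le\epsilon$. The inequality splits $g^{1+\nu}$ into a multiple of $g^2$ plus a $g$-free constant; I would then pick the free parameter $\lambda$ so that the coefficient of $g^2$ in $\frac{\tilde\epsilon^{1+\nu}}{(1+\nu)\epsilon}g^{1+\nu}$ equals exactly $\tilde\epsilon/2$, namely $\lambda=(\epsilon/\tilde\epsilon^{\nu})^{(1+\nu)/2}$. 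The leftover constant then works out to $\frac{1-\nu}{2(1+\nu)}\big(\tilde\epsilon^{1+\nu}/\epsilon^2\big)^{1/(1-\nu)}$. Finally I would check that the hypothesis $1/\tilde\epsilon\ge(1/2\tilde\delta)^{(1-\nu)/(1+\nu)}(1/\epsilon)^{2/(1+\nu)}$, after raising to the power $1+\nu$ and rearranging, is equivalent to $\tilde\delta\ge\tfrac12\big(\tilde\epsilon^{1+\nu}/\epsilon^2\big)^{1/(1-\nu)}$, which dominates the leftover constant since $\tfrac{1-\nu}{1+\nu}\le1$. Plugging back, the right-hand side of the displayed bound becomes $-\tfrac{\tilde\epsilon}{2}g^2+\tilde\delta$, which is the assertion.

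The main obstacle is the exponent bookkeeping in the last step: tracking the powers of $\tilde\epsilon$, $\epsilon$, and $\lambda$ through Young's inequality, and verifying that the choice of $\lambda$ converts the stated hypothesis into precisely (a strengthening of) the condition needed to absorb the residual constant into $\tilde\delta$. Everything else --- the descent lemma, the substitution, and the final recombination --- is routine. One should also record the minor point that the dimensionless factor $\tfrac{1-\nu}{1+\nu}$ produced by Young's inequality is at most $1$, so the hypothesis as stated is slightly conservative and comfortably suffices; a marginally different choice of $\lambda$ would match it with equality if desired.
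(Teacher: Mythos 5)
Your argument is correct. The paper itself gives no proof of this lemma---it is imported verbatim from Nesterov's universal gradient methods paper (Lemma 1 of \cite{Nesterov14})---and your reconstruction follows essentially the same route as Nesterov's original: the H\"older descent inequality $f(y)\le f(x)+\langle\nabla f(x),y-x\rangle+\tfrac{1}{(1+\nu)\epsilon}\|y-x\|^{1+\nu}$, substitution of the gradient step, and Young's inequality with exponents $\tfrac{2}{1+\nu},\tfrac{2}{1-\nu}$ to trade the $g^{1+\nu}$ term for $\tfrac{\tilde\epsilon}{2}g^2$ plus a constant absorbed by $\tilde\delta$. I verified the exponent bookkeeping: the residual constant is $\tfrac{1-\nu}{2(1+\nu)}\bigl(\tilde\epsilon^{1+\nu}/\epsilon^{2}\bigr)^{1/(1-\nu)}$, and the stated hypothesis is indeed equivalent to $\tilde\delta\ge\tfrac12\bigl(\tilde\epsilon^{1+\nu}/\epsilon^{2}\bigr)^{1/(1-\nu)}$, which dominates it; the $\nu=1$ endpoint also checks out.
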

That is, if we take a gradient descent step with increased regularization and assume $h$ is $\sigma$-strongly convex, the error for algorithm~\eqref{eq:agd} when $f$ is $(\epsilon,\nu)$-H\"older-continuous can be written as,
\begin{align}
\varepsilon_{k+1} =  \frac{(A_{k+1} - A_k)^2}{2\sigma\delta}\|\nabla f(x_{k+1})\|^2 -\frac{\tilde  \epsilon A_{k+1}}{2\delta}\|\nabla f(x_{k+1})\|^2 + \tilde \delta.
\end{align}
This allows us to infer a $O(1 /\tilde{\epsilon}\sigma k^2)$ convergence rate of the function to within $\tilde \delta$, which is controlled by the amount of regularization $\tilde \epsilon$ we apply in the gradient update. 
 %Second, the map \eqref{eq:weirdG} can be written as a scaled gradient update when $h$ is Euclidean.  We defer additional discussion about how to interpret the choice of auxiliary sequence $\mathcal{G}$  to Section~\ref{Sec:Add}.
%%\paragraph{Remark}\label{sec:rem}
%%We mention that the implicit Euler method applied to \eqref{eq:zup} and \eqref{eq:comp} results in a procedure which obtains an $O(1/A_k)$ convergence rate bound. 
%
%%\paragraph{Summary} We have introduced two dynamics for optimization -- \eqref{eq:el1} and \eqref{eq:prox_dyn} -- and showed that convergence rates can be established for both dynamics using Lyapunov function~\eqref{eq:Lyap1}. We have also shown several algorithms can be analyzed using the discrete-time Lyapunov function~\eqref{eq:lyap}. 
%
Having discussed algorithms ``derived'' from dynamics \eqref{eq:el1}, we next discuss algorithms arising from the second family of dynamics~\eqref{eq:el2} and a proximal variant of it. The derivations and analyses will be remarkably similar to those presented in this section.%~\eqref{eq:el1}.\footnote{We hope the redundancy emphasizes the extent to which the Lyapunov framework can be applied.}  %This Lyapunov function can be used for . For proximal methods, we have introduced other dynamics~\eqref{eq:prox_dyn}
\subsection{Methods arising from the second Euler-Lagrange equation}
We apply the implicit and explicit Euler schemes to the dynamics \eqref{eq:el2} written as the following system of equations:
 \begin{subequations}\label{eq:EL1}
 \begin{align}
 Z_t &= X_t + \frac{e^{\beta_t}}{\frac{d}{dt} e^{\beta_t}} \dot X_t \label{Eq:ELZ1},\\
 \frac{d}{dt} \nabla h(Z_t) &= \frac{\frac{d}{dt} e^{\beta_t}}{e^{\beta_t}}\left( \nabla h(X_t) - \nabla h(Z_t) - \frac{1}{\mu} \nabla f(X_t)\right)\label{Eq:ELH1},
 \end{align}
 \end{subequations}
As in the previous setting, we consider any parameter $\beta_t$ whose time derivative $\frac{d}{dt} e^{\beta_t} = (A_{k+1} - A_k)/\delta$ can be well-approximated by a discrete-time sequence $(A_i)_{i=1}^k$. In addition, we make the discrete-time approximations  $\frac{d}{dt} \nabla h(Z_t) = \frac{\nabla h(z_{k+1}) - \nabla h(z_k)}{\delta}$ and $\frac{d}{dt} \dot X_t = \frac{x_{k+1} - x_k}{\delta}$, and denote $\tau_k = \frac{A_{k+1} - A_k}{A_k}$. 
We have the following proposition. 
 \begin{proposition}\label{Prop:Forward2}
Written as an algorithm, the implicit Euler scheme applied to~\eqref{Eq:ELZ1} and~\eqref{Eq:ELH1} results in the following updates:
\begin{subequations}\label{Eq:AlgoForward121}
\begin{align}
z_{k+1} &= \underset{\substack{\,\,\,\,\,\,z\in\X\\ \,\,\,\,\,\,x =  \frac{\tau_{k}}{1+\tau_{k}} z + \frac{1}{1 +\tau_{k}} x_k}}{\text{arg\,\,\,min}} \left\{ f(x) + \mu D_h(z,x) + \frac{\mu}{\tau_{k}} D_h\left(z, z_k\right)\right\},\label{eq:hard21}\\
x_{k+1} &= \frac{\tau_{k}}{1 + \tau_{k}}z_{k+1} +\frac{1}{1+\tau_{k}}  x_{k}.%\label{Eq:ZSeq}\\
\end{align}
\end{subequations}
Using the following discrete-time Lyapunov function:
\begin{align}\label{eq:lyapddd}
E_{k} =  A_k(\mu D_h(x^\ast, z_k) + f(x_k) - f(x^\ast)),
\end{align}
we obtain the bound
%\begin{align*}
$E_{k+1} - E_k \leq 0$
%\end{align*}
for algorithm~\eqref{Eq:AlgoForward12}. This allows us to conclude a general $O(1/A_k)$ convergence rate for the implicit scheme~\eqref{Eq:AlgoForward12}. 
\end{proposition}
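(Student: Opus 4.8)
The plan is to mirror the proof of Proposition~\ref{Prop:Forward1}, but with the discrete-time analogue of the Lyapunov function~\eqref{eq:Lyap2} in place of that of~\eqref{eq:Lyap1}, and to keep track of the extra Bregman cross-terms produced by the additional damping in the second Euler--Lagrange equation~\eqref{eq:el2}. Throughout I would use the hypotheses inherited from Proposition~\ref{prop:strLyap}: $f$ is convex and $\mu$-uniformly convex with respect to $h$~\eqref{eq:unif}, and $h$ is strictly convex.

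The first step is to record the variational identities satisfied by the implicit iterates. Applying the implicit Euler scheme to the first-order system~\eqref{Eq:ELZ1}--\eqref{Eq:ELH1} (which is what the proximal subproblem~\eqref{eq:hard21} encodes after eliminating $x$), together with the stated discrete approximations and the relation $A_k\tau_k = A_{k+1}-A_k$, yields a mirror-descent identity
\[
\nabla h(z_{k+1}) - \nabla h(z_k) \;=\; \tau_k\Bigl(\nabla h(x_{k+1}) - \nabla h(z_{k+1}) - \tfrac{1}{\mu}\nabla f(x_{k+1})\Bigr),
\]
together with the coupling identity $(A_{k+1}-A_k)z_{k+1} = (A_{k+1}-A_k)x_{k+1} + A_k(x_{k+1}-x_k)$. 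These are exactly the analogues of~\eqref{eq:mirfor}--\eqref{eq:coupfor}, except that the damping term in~\eqref{eq:el2} contributes the extra summand $\nabla h(x_{k+1}) - \nabla h(z_{k+1})$.

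Next I would expand $E_{k+1}-E_k$ for $E_k = A_k(\mu D_h(x^\ast,z_k) + f(x_k) - f(x^\ast))$ with $x = x^\ast$. The one genuinely new bookkeeping device is that the divergence now carries a time-varying weight, so I would split $A_{k+1}D_h(x^\ast,z_{k+1}) - A_k D_h(x^\ast,z_k)$ as $A_k\bigl(D_h(x^\ast,z_{k+1}) - D_h(x^\ast,z_k)\bigr) + (A_{k+1}-A_k)D_h(x^\ast,z_{k+1})$, apply the three-point identity $D_h(x^\ast,z_{k+1}) - D_h(x^\ast,z_k) = -\langle\nabla h(z_{k+1})-\nabla h(z_k),x^\ast - z_{k+1}\rangle - D_h(z_{k+1},z_k)$ to the first piece, and substitute the mirror-descent identity above. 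This replaces $-\mu A_k\langle\nabla h(z_{k+1})-\nabla h(z_k),x^\ast - z_{k+1}\rangle$ by $-\mu(A_{k+1}-A_k)\langle\nabla h(x_{k+1})-\nabla h(z_{k+1}),x^\ast - z_{k+1}\rangle + (A_{k+1}-A_k)\langle\nabla f(x_{k+1}),x^\ast - z_{k+1}\rangle$. Using the Bregman identity $\langle\nabla h(z)-\nabla h(x),a-z\rangle = D_h(a,x) - D_h(a,z) - D_h(z,x)$ on the first of these turns it into $\mu(A_{k+1}-A_k)\bigl(D_h(x^\ast,x_{k+1}) - D_h(x^\ast,z_{k+1}) - D_h(z_{k+1},x_{k+1})\bigr)$, whose $-D_h(x^\ast,z_{k+1})$ part cancels exactly against the leftover weight term $(A_{k+1}-A_k)D_h(x^\ast,z_{k+1})$ coming from the split.

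It remains to handle the gradient term. The coupling identity gives $(A_{k+1}-A_k)(x^\ast - z_{k+1}) = (A_{k+1}-A_k)(x^\ast - x_{k+1}) + A_k(x_k - x_{k+1})$; then $\mu$-uniform convexity of $f$ with respect to $h$~\eqref{eq:unif} bounds $(A_{k+1}-A_k)\langle\nabla f(x_{k+1}),x^\ast - x_{k+1}\rangle$ by $(A_{k+1}-A_k)\bigl(f(x^\ast) - f(x_{k+1}) - \mu D_h(x^\ast,x_{k+1})\bigr)$, while plain convexity of $f$ bounds $A_k\langle\nabla f(x_{k+1}),x_k - x_{k+1}\rangle$ by $A_k\bigl(f(x_k) - f(x_{k+1})\bigr)$. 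Collecting everything, all of the function-value contributions telescope to zero, the two $\pm\mu(A_{k+1}-A_k)D_h(x^\ast,x_{k+1})$ terms cancel, and what survives is $-\mu A_k D_h(z_{k+1},z_k) - \mu(A_{k+1}-A_k)D_h(z_{k+1},x_{k+1}) \le 0$ by strict convexity of $h$ and $A_{k+1}\ge A_k\ge 0$; dropping the nonnegative divergence from $E_k$ then gives $f(x_k)-f(x^\ast)\le E_0/A_k$, i.e.\ the $O(1/A_k)$ rate. The main obstacle is bookkeeping rather than any single hard inequality: one must verify that the extra Bregman terms created by the second family's damping (the $\nabla h(x_{k+1})-\nabla h(z_{k+1})$ summand and the weight increment $A_{k+1}-A_k$ on the divergence) cancel exactly instead of leaving an uncontrolled residue, and the uniform-convexity hypothesis~\eqref{eq:unif} is precisely what is needed to absorb the one surviving $D_h(x^\ast,x_{k+1})$ term.
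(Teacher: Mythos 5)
Your proposal is correct and follows essentially the same route as the paper's own proof: the same variational identities for the implicit scheme, the same split of the weighted divergence $A_{k+1}D_h = A_kD_h + (A_{k+1}-A_k)D_h$, the Bregman three-point identity to produce the cancelling $D_h(x^\ast,z_{k+1})$ and surviving $-D_h(z_{k+1},x_{k+1})$ terms, and $\mu$-uniform convexity plus plain convexity to telescope the function values. Your bookkeeping is, if anything, slightly more careful than the paper's displayed derivation, which contains a couple of sign and argument-order typos but reaches the same final bound $-\alpha_k\mu D_h(z_{k+1},x_{k+1}) - A_k\mu D_h(z_{k+1},z_k)\le 0$.
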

\begin{proof}
The algorithm that follows from the implicit discretization of the dynamics~\eqref{Eq:AlgoForward121} satisfies the variational conditions
\begin{subequations}
\begin{align}
\nabla h(z_{k+1}) - \nabla h(z_k) &= \tau_k\Big(\nabla h(x_{k+1}) - \nabla h(z_{k+1}) -  \frac{1}{\mu}\nabla f(x_{k+1})\Big)\label{eq:mirfor1}\\
(x_{k+1} - x_k) &= \tau_k(z_{k+1} - x_{k+1}),
\label{eq:coupfor2}
\end{align}
\end{subequations}
where $\tau_k = \frac{\alpha_k}{A_k}$. Using these variational inequalities, we have the following argument: 
\begin{align*}
E_{k+1} - E_k &= \alpha_k \mu D_h(x, z_{k+1}) + A_{k}\mu D_h(x, z_{k+1}) - A_k\mu D_h(x,z_k)  \\
&\quad + A_{k+1} (f(x_{k+1}) - f(x))- A_{k} (f(x_{k}) - f(x))\\
 &= \alpha_k \mu D_h(x, z_{k+1}) - A_{k} \mu \langle \nabla h(z_{k+1}) - \nabla h(z_k), x - z_{k+1}\rangle  -\mu A_kD_h(z_{k+1}, z_k) \\
&\quad+ A_{k+1} (f(x_{k+1}) - f(x)) - A_{k} (f(x_{k}) - f(x))\\
 &\overset{\eqref{eq:mirfor1}}{=} \alpha_k\mu D_h(x, z_{k+1}) +A_{k} \tau_k\langle \nabla f(x_{k+1}), x - z_{k+1}\rangle - A_{k} \mu D_h(z_{k+1}, z_k))\\
&\quad + A_{k} \langle \nabla f(x_{k+1}), x_k - x_{k+1}\rangle +  A_{k} \tau_k\mu \langle \nabla h(x_{k+1}) - \nabla h(z_{k+1}), x - z_{k+1}\rangle\\
&\quad+ A_{k+1} (f(x_{k+1}) - f(x))  - A_{k} (f(x_{k}) - f(x))  \\
&\overset{\eqref{eq:coupfor2}}{=} \alpha_k \mu D_h(x, z_{k+1}) +\alpha_k\langle \nabla f(x_{k+1}), x - x_{k+1}\rangle - A_{k} \mu D_h(z_{k+1}, z_k))  \\
&\quad + A_{k} \langle \nabla f(x_{k+1}), x_k - x_{k+1}\rangle +  \alpha_k \mu \langle \nabla h(x_{k+1}) - \nabla h(z_{k+1}), x - z_{k+1}\rangle \\
&\quad + A_{k} (f(x_{k+1}) - f(x_k))+ \alpha_{k} (f(x_{k+1}) - f(x)) \\
&\leq -\alpha_k \mu D_h(x_{k+1}, z_{k+1}) - A_{k}\mu D_h(z_{k+1}, z_k)
%&\leq   - A_{k} \tau_k(f(x_{k+1}) -f( x) + \mu D_h(x, x_{k+1})) +\alpha_k D_h(x, z_{k+1}) \\
%&\quad - A_{k+1} D_h(z_{k+1}, z_k))+ A_{k+1} (f(x_{k+1}) - f(x)) - A_{k} (f(x_{k}) - f(x))\\
%&\quad +  A_{k} \tau_k\mu \langle \nabla h(x_{k+1}) - \nabla h(z_{k+1}), x - z_{k+1}\rangle + A_{k} \langle \nabla f(x_{k+1}), x_k - x_{k+1}\rangle\\
%&=   - A_{k} \tau_k(f(x_{k+1}) -f( x) + \mu D_h(x, z_{k+1})) +\alpha_k D_h(x, z_{k+1}) \\
%&\quad - A_{k+1} D_h(z_{k+1}, z_k))+ A_{k+1} (f(x_{k+1}) - f(x)) - A_{k} (f(x_{k}) - f(x))\\
%&\quad -  A_{k} \tau_k\mu D_h(z_{k+1}, x_{k+1}) + A_{k} \langle \nabla f(x_{k+1}), x_k - x_{k+1}\rangle.
\end{align*}
The inequality uses the Bregman three-point identity~\eqref{eq:bregthree} and $\mu$-uniform convexity of $f$ with respect to $h$ \eqref{eq:unif}. 
\end{proof}

%%The proof of this result can be found in Appendix~\ref{App:PropForward2}.  
We now focus on analyzing the accelerated gradient family, which can be viewed as a discretization that contains easier subproblems. %As a preview, it appears as thouDiscretization of dynamics~\eqref{eq:EL1} which use explicit methods will lead to algorithms which can only be analyzed in the setting where $h$ is Euclidean.
 
\subsubsection{Accelerated gradient descent~\cite{Nesterov04}}
We study a family of algorithms which can be thought of as slight variations of the implicit Euler scheme applied to~\eqref{Eq:ELZ1} and the explicit Euler scheme applied to~\eqref{Eq:ELH1}
\begin{subequations}\label{Eq:StrongConv21}
\begin{align}
x_k &= \frac{\tau_k}{1+\tau_k} z_k + \frac{1}{1+\tau_k} y_k \label{Eq:Coupling1}\\
\nabla h(z_{k+1}) - \nabla h(z_k) &= \tau_k \left( \nabla h(x_k) - \nabla h(z_k) - \frac{1}{\mu} \nabla f(x_k)\right) \label{Eq:ZSeq1}\\
y_{k+1} &= \mathcal{G}(x), \label{Eq:Grad1}
\end{align}
\end{subequations}
%where $x = (x_{k}, z_{k+1}, y_k)$ is the previous state and  $\tau_k = \frac{A_{k+1} - A_k}{A_{k}}$.
where $x = (x_{k}, z_{k+1}, y_k)$ is the previous state and  $\tau_k = \frac{A_{k+1} - A_k}{A_{k+1}}$. Note that when $\mathcal{G}(x) = x_{k}$, we recover classical discretizations. The additional sequence $y_{k+1} = \mathcal{G}(x)$, however, allows us to obtain better error bounds using the Lyapunov analysis. To analyze the general algorithm~\eqref{Eq:StrongConv21}, we use the following Lyapunov function:
\begin{align}\label{eq:lyapdd}
E_{k} = A_k(\mu D_h(x^\ast, z_k)+ f(y_k) - f(x^\ast)). 
\end{align}
We begin with the following proposition, which provides an initial error bound for algorithm~\eqref{Eq:StrongConv21} using the general update \eqref{Eq:Grad1}.
\begin{proposition} \label{prop:strongbound}
Assume the objective function $f$ is $\mu$-uniformly convex with respect to $h$ \eqref{eq:unif} and $h$ is $\sigma$-strongly convex. In addition, assume $f$ is $(1/\epsilon)$-smooth.
Using the sequences~\eqref{Eq:Coupling1} and \eqref{Eq:ZSeq1}, the following bound holds: 
\begin{align}\label{eq:errrrr}
\frac{E_{k+1} - E_k}{\delta} \leq  \varepsilon_{k+1},
\end{align}
where the error term has the following form: 
\begin{align*}%\label{eq:error_acst}
\varepsilon_{k+1} &= % A_{k+1}\frac{\mu\tau_k}{2\sigma}\|\nabla h(x_k) - \nabla h(z_k) - \frac{1}{\mu} \nabla f(x_k)\|^2  - \mu A_{k+1}\tau_k D_h(x_k , z_k)+ A_{k+1}\tau_k(f(y_k)- f(x_k))\notag\\
%&\quad  - \frac{\mu A_{k+1}}{2} \|x_k - y_k\|^2 + A_{k+1}(f(y_{k+1}) - f(x_k)).
%
\frac{ A_{k+1} }{\delta}(f(y_{k+1}) - f(x_k)) +\frac{A_{k+1}}{\delta}\left( \frac{\tau_k}{2\epsilon}- \frac{\sigma\mu}{2\tau_k}\right)\|x_k - y_k\|^2  - \frac{A_{k+1} \mu\sigma}{2\delta}\|x_k - y_k\|^2  \\
&\quad + \frac{\alpha_k}{\delta}\langle \nabla f(x_k), y_k - x_k\rangle  +  \frac{A_{k+1}\sigma \mu}{2\delta}\|\tau_k(\nabla h(x_k) - \nabla h(z_k) - \frac{1}{\mu}\nabla f(x_k))\|^2. \notag
\end{align*}
When $h$ is Euclidean, the error simplifies to the following form
%\begin{align*}
%\varepsilon_{k+1} &= \frac{\mu}{2}\|y_{k} - x_k - \frac{\tau_k}{\mu}\nabla f(x_k)\|^{2} - \frac{\mu}{2\tau_k} \|x_k - y_k\|^2 + \tau_k(f(y_k) - f(x_k))\\
%&\quad - \frac{\mu}{2} \|x_k - y_k\|^2 +  f(y_{k+1}) - f(x_k).
%\end{align*} 
%Finally, if we assume $\frac{1}{\epsilon}$-smoothness of the gradients of $f$, we can further simplify the error bound to the following form,
\begin{align*}
\varepsilon_{k+1} &=  \frac{A_{k+1}}{\delta}\left( f(y_{k+1}) - f(x_k) + \frac{\tau_k^2}{2\mu}\|\nabla f(x_k)\|^{2} +\left(\frac{\tau_k}{2\epsilon} -\frac{\mu }{2\tau_k} \right)\|x_k - y_k\|^2 \right). 
\end{align*}
\end{proposition}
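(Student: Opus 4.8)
The plan is to follow the template of the proof of Proposition~\ref{Prop:Forward2}, replacing the implicit variational identities used there by the identities that come out of the explicit/coupled scheme~\eqref{Eq:StrongConv21}. First I would record the two identities produced by the algorithm: clearing denominators in the coupling~\eqref{Eq:Coupling1} gives $x_k - y_k = \tau_k(z_k - x_k)$ (equivalently $\|z_k-x_k\| = \tau_k^{-1}\|x_k-y_k\|$), while~\eqref{Eq:ZSeq1} is already in the form $\nabla h(z_{k+1})-\nabla h(z_k) = \tau_k\big(\nabla h(x_k)-\nabla h(z_k)-\tfrac{1}{\mu}\nabla f(x_k)\big)$; these play the roles of~\eqref{eq:coupfor2} and~\eqref{eq:mirfor1}. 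Next I would expand $E_{k+1}-E_k$ from~\eqref{eq:lyapdd}, splitting $A_{k+1}=A_k+\alpha_k$ with $\alpha_k := A_{k+1}-A_k$ (so $\tau_k = \alpha_k/A_{k+1}$) in each group, so that the Bregman part becomes $A_k\mu\big(D_h(x^\ast,z_{k+1})-D_h(x^\ast,z_k)\big) + \alpha_k\mu D_h(x^\ast,z_{k+1})$ and the function-value part becomes $A_{k+1}\big(f(y_{k+1})-f(x_k)\big) + A_k\big(f(x_k)-f(y_k)\big) + \alpha_k\big(f(x_k)-f(x^\ast)\big)$. Applying the Bregman three-point identity~\eqref{eq:bregthree} to $D_h(x^\ast,z_{k+1})-D_h(x^\ast,z_k)$ produces $-D_h(z_{k+1},z_k) - \langle\nabla h(z_{k+1})-\nabla h(z_k),x^\ast-z_{k+1}\rangle$, into which I substitute~\eqref{Eq:ZSeq1}; writing $x^\ast - z_{k+1} = (x^\ast-x_k)+(x_k-z_{k+1})$ and using the coupling identity together with the $z$-update (to express $z_{k+1}$ through $z_k$, $x_k$, $\nabla f(x_k)$) re-expresses every $\nabla f(x_k)$-inner product in terms of $\langle\nabla f(x_k),x^\ast-x_k\rangle$ and $\langle\nabla f(x_k),y_k-x_k\rangle$.

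Then I would bound $\alpha_k\langle\nabla f(x_k),x^\ast-x_k\rangle \le \alpha_k\big(f(x^\ast)-f(x_k)-\mu D_h(x^\ast,x_k)\big)$ using the $\mu$-uniform convexity of $f$ with respect to $h$~\eqref{eq:unif}: the $\alpha_k f(x^\ast)$ kills $-\alpha_k f(x^\ast)$, the $-\alpha_k f(x_k)$ kills $+\alpha_k f(x_k)$, and $-\alpha_k\mu D_h(x^\ast,x_k)$ is cancelled against $\alpha_k\mu D_h(x^\ast,z_{k+1})$ after one more three-point expansion $D_h(x^\ast,z_{k+1}) = D_h(x^\ast,x_k) + D_h(x_k,z_{k+1}) + \langle\nabla h(x_k)-\nabla h(z_{k+1}),x^\ast-x_k\rangle$ --- exactly as the term $-\alpha_k\mu D_h(x_{k+1},z_{k+1})$ surfaced in the proof of Proposition~\ref{Prop:Forward2}. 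The term $A_k\big(f(x_k)-f(y_k)\big)$ I would bound using convexity together with the $(1/\epsilon)$-smoothness of $f$, which contributes a $\langle\nabla f(x_k),y_k-x_k\rangle$ cross term (partly cancelling the one from the previous paragraph, leaving the surviving $\tfrac{\alpha_k}{\delta}\langle\nabla f(x_k),y_k-x_k\rangle$) and the quadratic penalty proportional to $\tfrac{1}{\epsilon}\|x_k-y_k\|^2$.

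What remains is then a combination of $\nabla h$-inner products, $-A_k\mu D_h(z_{k+1},z_k)$, and $\alpha_k\mu D_h(x_k,z_{k+1})$; I would control these using the $\sigma$-strong convexity of $h$ (so $D_h(u,v)\ge\tfrac{\sigma}{2}\|u-v\|^2$) and Young's inequality, recognising $\tau_k\big(\nabla h(x_k)-\nabla h(z_k)-\tfrac{1}{\mu}\nabla f(x_k)\big) = \nabla h(z_{k+1})-\nabla h(z_k)$, so that the positive residual is exactly $\tfrac{A_{k+1}\sigma\mu}{2}\|\nabla h(z_{k+1})-\nabla h(z_k)\|^2$ and the negative residuals are the $-\tfrac{\sigma\mu}{2\tau_k}\|x_k-y_k\|^2$ and $-\tfrac{\mu\sigma}{2}\|x_k-y_k\|^2$ pieces (using $\|z_k-x_k\| = \tau_k^{-1}\|x_k-y_k\|$, up to the common $A_{k+1}/\delta$ factor). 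Dividing by $\delta$ gives the general form of $\varepsilon_{k+1}$. For the Euclidean case I would set $h=\tfrac{1}{2}\|\cdot\|^2$, so $\sigma=1$, $\nabla h=\mathrm{id}$, $D_h(u,v)=\tfrac{1}{2}\|u-v\|^2$, whence $\|\nabla h(z_{k+1})-\nabla h(z_k)\|^2 = \tau_k^2\|x_k-z_k-\tfrac{1}{\mu}\nabla f(x_k)\|^2$; expanding this square and substituting $x_k-z_k = \tau_k^{-1}(y_k-x_k)$, the emerging $-\tfrac{\alpha_k}{\delta}\langle\nabla f(x_k),y_k-x_k\rangle$ exactly cancels the surviving $+\tfrac{\alpha_k}{\delta}\langle\nabla f(x_k),y_k-x_k\rangle$, the $\|\nabla f(x_k)\|^2$ coefficient collapses to $\tfrac{\tau_k^2}{2\mu}$, and the $\|x_k-y_k\|^2$ pieces combine with $-\tfrac{A_{k+1}\mu}{2\delta}\|x_k-y_k\|^2$ into $\tfrac{A_{k+1}}{\delta}\big(\tfrac{\tau_k}{2\epsilon}-\tfrac{\mu}{2\tau_k}\big)\|x_k-y_k\|^2$, which is the simplified expression.

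The hard part will be the constant-chasing in the last two paragraphs: there are many $\nabla h$/Bregman cross terms to route correctly, and one must choose the Young split so that the positive square term carries precisely the coefficient $A_{k+1}\sigma\mu/2$ while the leftover stays dominated by $-A_k\mu D_h(z_{k+1},z_k)$. The exact cancellations in the Euclidean specialization are the consistency check that the bookkeeping is right.
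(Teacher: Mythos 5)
Your overall strategy is the paper's: the paper also expands the Lyapunov difference, applies the Bregman three-point identity, substitutes the variational identities of \eqref{Eq:Coupling1} and \eqref{Eq:ZSeq1}, uses $\mu$-uniform convexity on the $\langle\nabla f(x_k),x^\ast-x_k\rangle$ term, uses $\sigma$-strong convexity of $h$ plus Fenchel--Young to produce $\tfrac{\sigma\mu}{2}\|\nabla h(z_{k+1})-\nabla h(z_k)\|^2$, and closes the Euclidean case by the same cross-term cancellation you describe. (The paper phrases it via the normalized function $\tilde E_k=E_k/A_k$ and the intermediate bound $\tilde E_{k+1}-\tilde E_k\le-\tau_k\tilde E_k+\varepsilon_{k+1}/A_{k+1}$, but that is cosmetic.) However, two steps of your bookkeeping would fail as written. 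First, you import the decomposition of Proposition~\ref{Prop:Forward2}, putting the weight $A_k$ on $D_h(x^\ast,z_{k+1})-D_h(x^\ast,z_k)$ and isolating $\alpha_k\mu D_h(x^\ast,z_{k+1})$. That template works there because $\tau_k=\alpha_k/A_k$, so $A_k\tau_k=\alpha_k$. Here $\tau_k=\alpha_k/A_{k+1}$, so after substituting \eqref{Eq:ZSeq1} the coefficient of $\langle\nabla f(x_k),x^\ast-\cdot\rangle$ is $A_k\tau_k=\alpha_k(1-\tau_k)$, which falls short of the $\alpha_k$ you need to cancel $\alpha_k(f(x_k)-f(x^\ast))$ by convexity; the deficit $\alpha_k\tau_k\langle\nabla f(x_k),x^\ast-\cdot\rangle$ depends on $x^\ast$ and cannot be absorbed into $\varepsilon_{k+1}$. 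The correct split for this scheme puts $A_{k+1}$ on the Bregman difference and $\alpha_k$ on $D_h(x^\ast,z_k)$ (equivalently, work with $\tilde E_k$ and multiply by $A_{k+1}$ at the end).

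Second, the source you give for the $\tfrac{\tau_k}{2\epsilon}\|x_k-y_k\|^2$ penalty is a sign error: $(1/\epsilon)$-smoothness upper-bounds $f(y_k)-f(x_k)$, not $f(x_k)-f(y_k)$, so it cannot contribute a positive quadratic when you upper-bound $A_k(f(x_k)-f(y_k))$ (that term must be handled by convexity, or better by uniform convexity, which is where the $-\tfrac{\mu\sigma}{2}\|x_k-y_k\|^2$ piece actually originates via $-\mu D_h(x_k,y_k)$). In the paper the $\tfrac{\tau_k}{2\epsilon}$ term arises from the correction $+\alpha_k(f(y_k)-f(x_k))$ that appears because the uniform-convexity bound is anchored at $x_k$ while the Lyapunov function is evaluated at $(y_k,z_k)$; that correction has the right sign for smoothness and also generates, via the three-point identity, the $-\tau_k\mu D_h(x_k,z_k)\le-\tfrac{\sigma\mu}{2\tau_k}\|x_k-y_k\|^2$ piece. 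This reconciliation step is absent from your plan, so neither of those two terms in the stated $\varepsilon_{k+1}$ would be produced by a legitimate inequality under your accounting.
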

We present a proof of Proposition~\ref{prop:strongbound} in Appendix~\ref{App:strongbound_proof}. %\paragraph{\bf Accelerated gradient descent~\cite{Nesterov04}}
 The result for accelerated gradient descent can be summed up in the following corollary, which is a consequence of Propositions~\ref{eq:prop1} and~\ref{prop:strongbound}.
 \begin{corollary}\label{prop:gradstr}
Using the gradient step,
\begin{align*}
\mathcal{G}(x) = x_{k}- \epsilon \nabla f(x_{k}),
\end{align*}
 for update~\eqref{Eq:Grad1} results in an error which scales as
\begin{align*}
\varepsilon_{k+1} &=\frac{A_{k+1}}{\delta}\left(\frac{\tau_k^2}{2\mu} - \frac{\epsilon}{2}  \right)\|\nabla f(x_k)\|^{2} +\frac{A_{k+1}}{\delta}\left(\frac{\tau_k}{2\epsilon} -\frac{\mu }{2\tau_k} \right)\|x_k - y_k\|^2,
\end{align*}
when $h$ is Euclidean.
\end{corollary}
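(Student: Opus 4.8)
The plan is to combine the Euclidean error bound already established in Proposition~\ref{prop:strongbound} with the smoothness estimate proved inside Lemma~\ref{eq:prop1}. Specialized to $h(x)=\tfrac12\|x\|^2$ (so $\sigma=1$), Proposition~\ref{prop:strongbound} gives
\begin{align*}
\frac{E_{k+1}-E_k}{\delta} \;\le\; \frac{A_{k+1}}{\delta}\left( f(y_{k+1}) - f(x_k) + \frac{\tau_k^2}{2\mu}\|\nabla f(x_k)\|^{2} +\Bigl(\frac{\tau_k}{2\epsilon} -\frac{\mu }{2\tau_k} \Bigr)\|x_k - y_k\|^2 \right),
\end{align*}
where $y_{k+1}$ is whatever the map $\mathcal{G}$ produces out of the previous state. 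Since $\mathcal{G}$ here depends only on $x_k$, everything reduces to controlling the single term $f(y_{k+1}) - f(x_k)$ for the choice $y_{k+1} = x_k - \epsilon\nabla f(x_k)$.

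First I would observe that in the Euclidean setting this explicit gradient step is exactly the gradient map $\mathcal{G}_\epsilon$ of \eqref{eq:gradmap}: the optimality condition \eqref{eq:one} reads $\nabla f(x) = \tfrac1\epsilon\bigl(x - \mathcal{G}_\epsilon(x)\bigr)$, i.e.\ $\mathcal{G}_\epsilon(x) = x - \epsilon\nabla f(x)$. Hence the first estimate derived in the proof of Lemma~\ref{eq:prop1}, which uses only $(1/\epsilon)$-smoothness of $f$ together with \eqref{eq:one}, applies verbatim and yields $f(y_{k+1}) = f(\mathcal{G}_\epsilon(x_k)) \le f(x_k) - \tfrac{\epsilon}{2}\|\nabla f(x_k)\|^2$. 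Substituting this into the bound above, the term $-\tfrac{\epsilon}{2}\|\nabla f(x_k)\|^2$ merges with $\tfrac{\tau_k^2}{2\mu}\|\nabla f(x_k)\|^2$ to produce the coefficient $\bigl(\tfrac{\tau_k^2}{2\mu} - \tfrac{\epsilon}{2}\bigr)$, while the $\|x_k-y_k\|^2$ term is carried through unchanged, giving precisely the claimed expression for $\varepsilon_{k+1}$. All hypotheses are already in place: Proposition~\ref{prop:strongbound} assumes $f$ is $\mu$-uniformly convex w.r.t.\ $h$, $h$ is $\sigma$-strongly convex, and $f$ is $(1/\epsilon)$-smooth, and Lemma~\ref{eq:prop1} needs only the latter two.

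There is no genuinely hard step; the two things to be careful about are (i) verifying that the Euclidean instance of the gradient map \eqref{eq:gradmap} really is $x\mapsto x-\epsilon\nabla f(x)$, so that Lemma~\ref{eq:prop1} is applicable to $\mathcal{G}$ as stated in the corollary, and (ii) noting that, because the smoothness estimate is an inequality, the resulting $\varepsilon_{k+1}$ should be read as an upper bound on $(E_{k+1}-E_k)/\delta$ rather than an identity, which is all that is needed downstream. One may additionally remark that non-positivity of this error — hence an $O(1/A_k)$ rate — follows as soon as $\tau_k^2 \le \epsilon\mu$, since that single condition makes both coefficients $\tfrac{\tau_k^2}{2\mu} - \tfrac{\epsilon}{2}$ and $\tfrac{\tau_k}{2\epsilon} - \tfrac{\mu}{2\tau_k}$ nonpositive.
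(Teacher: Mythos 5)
Your proposal is correct and follows exactly the route the paper intends: the corollary is stated as ``a consequence of Propositions~\ref{eq:prop1} and~\ref{prop:strongbound},'' i.e.\ plug the descent estimate $f(\mathcal{G}_\epsilon(x_k)) \le f(x_k) - \tfrac{\epsilon}{2}\|\nabla f(x_k)\|^2$ from Lemma~\ref{eq:prop1} into the Euclidean form of the error bound in Proposition~\ref{prop:strongbound}, which is precisely what you do. Your two cautionary remarks — that the Euclidean instance of \eqref{eq:gradmap} is $x \mapsto x - \epsilon\nabla f(x)$ via \eqref{eq:one}, and that $\varepsilon_{k+1}$ is an upper bound rather than an identity — are both accurate and consistent with the paper's subsequent discussion of the condition $\tau_k \le \sqrt{\mu\epsilon}$.
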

The parameter choice $\tau_k \leq \sqrt{\mu\epsilon}= 1/\sqrt{\kappa}$ ensures the error is non-positive. With this choice, we obtain a linear $O(e^{-\sqrt{\mu \epsilon}k})=O(e^{-k/\sqrt{\kappa}}) $ convergence rate. Again, if we take the discretization step to scale according to the smoothness as $\delta = \sqrt{\epsilon}$, then both $\|x_k - y_k\| = O(\sqrt{\epsilon})$ and $\varepsilon_k = O(\sqrt{\epsilon})$, so we recover the dynamics~\eqref{eq:euc} and the continuous Lyapunov argument $\dot{\mathcal{E}}_t \leq 0$ in the limit $\sqrt{\epsilon} \rightarrow 0$.
\subsubsection{\bf Quasi-monotone method}
%This family is a unique in our story so far.   T
We end this section by studying a family of algorithms which can be thought of as a variation of the implicit Euler scheme applied to~\eqref{Eq:ELH1} and~\eqref{Eq:ELH1},
\begin{subequations}\label{eq:str1}
\begin{align}
x_{k+1} &= \frac{\tau_{k}}{1+\tau_{k}}z_k + \frac{1}{1+\tau_{k}} x_k\label{eq:strx}\\
%y_{k+1} &= \mathcal{G}(x)\label{eq:stry}\\
\nabla h(z_{k+1}) &= \nabla h(z_k) + \tau_{k}\left(\nabla h(x_{k+1}) - \nabla h(z_{k+1}) - (1/\mu)\nabla f(x_{k+1})\right),\label{eq:strz}
\end{align}
\end{subequations}
where $\tau_k = \frac{A_{k+1} - A_k}{A_{k}}:=\frac{\alpha_k}{A_{k}} $. In discretization~\eqref{eq:strx}, the state $z_{k+1}$ has been replaced by the state $z_k$. %In particular, update~\eqref{eq:strz} can be written
%\begin{align}\label{eq:newup}
%z_{k+1} = \arg \min_{z\in \X} \left\{ \langle  \nabla f(x_{k+1}), z\rangle + \frac{\mu}{\tau_k} D_h(z, z_{k+1}) - \mu D_h(z, x_{k+1}) \right\}.
%\end{align}
When $h$ is Euclidean, we can write~\eqref{eq:strz} as the following update:
\begin{align*}
z_{k+1} = \arg \min_{z\in \X} \left\{ \langle  \nabla f(x_{k+1}), z\rangle + \frac{\mu}{2\tau_k}\left\|z - \tilde z_{k+1}\right\|^2\right\}.
\end{align*}
%and $\mathcal{X} = \mathbb{R}$, the update~\eqref{eq:newup} simplifies to an averaging scheme. 
where $\tilde z_{k+1} = \frac{z_k + \tau_kx_{k+1}}{1+\tau_k}$. The update~\eqref{eq:strz} involves optimizing a linear approximation to the function regularized by a weighted combination of Bregman divergences. This yields the result summarized in the following proposition. %In what follows, we show the algorithm with these optimality conditions is optimal for some oracle models in optimization. We begin by analyzing algorithm~\eqref{eq:str1} using  Lyapunov function~\eqref{eq:lyap5}. This algorithm can be thought of as the quasi-monotone method~\ref{sec:quasi} applied to the setting where $f$ is strongly convex; it is optimal for the setting where $f$ and $h$ are strongly convex and there are no smoothness assumptions on $f$ other than bounded subgradients. The following proposition summarizes the behavior of algorithm~\eqref{eq:str1}.
\begin{proposition}\label{prop:quasi-strong}
 Assume $f$ is $\mu$-strongly convex with respect to $h$ and $h$ is $\sigma$-strongly convex. The following error bound:
\begin{align*}
\frac{E_{k+1} - E_k}{\delta} \leq  \varepsilon_{k+1},
\end{align*}
 can be shown for algorithm~\eqref{eq:str1} using Lyapunov function~\eqref{eq:lyapddd}, where the error scales as 
\begin{align}\label{eq:quas_bound2}
\varepsilon_{k+1} = \frac{ A_k\tau_{k}^2}{2\mu\sigma\delta} \|\nabla f(x_{k+1})\|^{2}.
\end{align}
%This gives the convergence rate bound,
%\begin{align*}
%f(x_k) - f(x^\ast) \leq \frac{A_0(\mu D_h(x^\ast, z_0) + f(x_0) - f(x^\ast)) +\sum_{i=1}^k \varepsilon_i}{A_k}.
%\end{align*}
\end{proposition}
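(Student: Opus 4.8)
The argument will run parallel to the proof of Proposition~\ref{Prop:Forward2}; the only structural difference is that the coupling~\eqref{eq:strx} involves $z_k$ in place of $z_{k+1}$, and it is exactly this substitution that converts the exact decrease $E_{k+1}-E_k\le 0$ of the fully implicit scheme into a bound carrying the error term~\eqref{eq:quas_bound2}. First I would record the two variational identities satisfied by algorithm~\eqref{eq:str1}: the mirror step $\nabla h(z_{k+1}) - \nabla h(z_k) = \tau_k\big(\nabla h(x_{k+1}) - \nabla h(z_{k+1}) - \tfrac{1}{\mu}\nabla f(x_{k+1})\big)$ from~\eqref{eq:strz}, and the coupling $x_{k+1} - x_k = \tau_k(z_k - x_{k+1})$ obtained by rearranging~\eqref{eq:strx}, keeping in mind $\tau_k = \alpha_k/A_k$ and $A_{k+1} = A_k + \alpha_k$.

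Next I would expand $E_{k+1}-E_k$ for the Lyapunov function~\eqref{eq:lyapddd}. Splitting $A_{k+1}\mu D_h(x^\ast, z_{k+1}) = A_k\mu D_h(x^\ast, z_{k+1}) + \alpha_k\mu D_h(x^\ast, z_{k+1})$ and applying the Bregman three-point identity~\eqref{eq:bregthree} to $A_k\mu\big(D_h(x^\ast, z_{k+1}) - D_h(x^\ast, z_k)\big)$ turns this into $-A_k\mu\langle \nabla h(z_{k+1}) - \nabla h(z_k), x^\ast - z_{k+1}\rangle - A_k\mu D_h(z_{k+1}, z_k)$; substituting the mirror identity and using $A_k\tau_k = \alpha_k$ converts the inner product into $-\alpha_k\mu\langle \nabla h(x_{k+1}) - \nabla h(z_{k+1}), x^\ast - z_{k+1}\rangle + \alpha_k\langle \nabla f(x_{k+1}), x^\ast - z_{k+1}\rangle$. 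A second application of the three-point identity with base point $x_{k+1}$ makes every $D_h(x^\ast,\cdot)$ contribution cancel against the leftover $\alpha_k\mu D_h(x^\ast, z_{k+1})$ and against a $-\alpha_k\mu D_h(x^\ast, x_{k+1})$ generated below, leaving only the harmless $-\alpha_k\mu D_h(z_{k+1}, x_{k+1})$. For the gradient inner product I would write $x^\ast - z_{k+1} = (x^\ast - x_{k+1}) + (x_{k+1} - z_k) + (z_k - z_{k+1})$, use the coupling to replace $\alpha_k(x_{k+1} - z_k)$ by $A_k(x_k - x_{k+1})$, and then apply $\mu$-uniform convexity of $f$ with respect to $h$~\eqref{eq:unif} to the first piece and ordinary convexity of $f$ to the second; the $f$-values telescope exactly against $A_{k+1}(f(x_{k+1}) - f(x^\ast)) - A_k(f(x_k) - f(x^\ast))$ (this is where $A_{k+1} = A_k + \alpha_k$ enters), and the $\mu D_h(x^\ast, x_{k+1})$ produced is the term cancelled above. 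The only survivor is the residual $\alpha_k\langle \nabla f(x_{k+1}), z_k - z_{k+1}\rangle$.

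At that point the inequality reads $E_{k+1} - E_k \le -\alpha_k\mu D_h(z_{k+1}, x_{k+1}) - A_k\mu D_h(z_{k+1}, z_k) + \alpha_k\langle \nabla f(x_{k+1}), z_k - z_{k+1}\rangle$, and I would close it by Cauchy--Schwarz followed by Young's inequality, tuning the weight so that the $\|z_k - z_{k+1}\|^2$ term is absorbed exactly by $A_k\mu D_h(z_{k+1}, z_k)$: since $\sigma$-strong convexity of $h$ gives $D_h(z_{k+1}, z_k) \ge \tfrac{\sigma}{2}\|z_{k+1} - z_k\|^2$, the weight $A_k\mu\sigma$ leaves precisely $\tfrac{\alpha_k^2}{2A_k\mu\sigma}\|\nabla f(x_{k+1})\|^2 = \tfrac{A_k\tau_k^2}{2\mu\sigma}\|\nabla f(x_{k+1})\|^2$. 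Dropping the nonpositive $-\alpha_k\mu D_h(z_{k+1}, x_{k+1})$ and dividing by $\delta$ gives exactly the stated $\varepsilon_{k+1}$ in~\eqref{eq:quas_bound2}.

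The main obstacle is bookkeeping rather than insight: one must keep the numerous $D_h(x^\ast,\cdot)$ and $f(\cdot)$ terms organized so they annihilate in pairs, and match the Young's-inequality constant to the strong-convexity constant of $h$ so that the $z_{k+1}$-dependent remainder disappears---any other constant would leave an uncontrolled term. Conceptually the proof reuses the same telescoping template as Propositions~\ref{Prop:Forward2} and~\ref{prop:strongbound}, with one extra inner-product term to absorb.
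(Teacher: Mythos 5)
Your proposal is correct and follows essentially the same route as the paper's proof: expand the Bregman difference, substitute the mirror step~\eqref{eq:strz}, use the three-point identity~\eqref{eq:bregthree} and the coupling~\eqref{eq:strx} to telescope the $D_h(x^\ast,\cdot)$ and $f$-terms via $\mu$-uniform convexity, and absorb the residual $\alpha_k\langle\nabla f(x_{k+1}), z_k - z_{k+1}\rangle$ against $A_k\mu D_h(z_{k+1},z_k)$ by Fenchel--Young and $\sigma$-strong convexity of $h$. The only cosmetic difference is that you work with the weighted function $E_k = A_k\tilde E_k$ directly, whereas the paper establishes $\tilde E_{k+1}-\tilde E_k\le -\tau_k\tilde E_{k+1}+\varepsilon$ for the normalized function and rescales at the end; your bookkeeping in fact lands on the stated constant $\tfrac{A_k\tau_k^2}{2\mu\sigma\delta}$ more cleanly than the paper's displayed derivation.
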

No smoothness assumptions on $f$ and $h$ are needed to show this bound, and we can replace all the gradients with subgradients. If we assume that all the subgradients of $f$ are upper bounded in norm, then optimizing this bound results in an $f(x_k) - f(x^\ast) \leq O(1/k)$ convergence rate for the function value, which is optimal for subgradient methods designed for strongly convex functions.\footnote{In particular, this rate is achieved by taking $\tau_k = \frac{2}{k+2}$.}

\subsection{Frank-Wolfe algorithms}
\label{Sec:Frank}
In this section we describe how Frank-Wolfe algorithms can, in a sense, be considered as discrete-time mappings of dynamics which satisfy the conditions, 
 \begin{subequations}\label{Eq:Frankd}
 \begin{align}
 Z_t &= X_t + \dot \beta_t^{-1} \dot X_t,\label{Eq:Frank2}\\
0 \leq \langle &\nabla f(X_t), x-  Z_t\rangle,\quad \forall x \in \X. \label{Eq:Frank1} 
\end{align}
\end{subequations}
These dynamics are not guaranteed to exist; however, they are remarkably similar to the dynamics~\eqref{eq:el1}, where instead of using the Bregman divergence to ensure nonnegativity of the variational inequality $0\leq  \dot \beta_t e^{\beta_t}\langle \nabla f(X_t), x- Z_t\rangle$, we simply assume \eqref{Eq:Frank1} holds on the domain $\mathcal{X}$. We summarize the usefulness of dynamics~\eqref{Eq:Frankd} in the following proposition.
\begin{proposition} \label{prop:franklyap} Assume $f$ is convex and the ideal scaling \eqref{Eq:IdeScaBet} holds. The following function:
\begin{equation}\label{Eq:FrankE}
\mathcal{E}_t= e^{\beta_t} (f(X_t) - f(x)),
\end{equation} 
is a Lyapunov function for the dynamics which satisfies~\eqref{Eq:Frankd}. We can therefore conclude an $O(e^{-\beta_t})$ convergence rate of dynamics~\eqref{Eq:Frankd} to the minimizer of the function. 
\end{proposition}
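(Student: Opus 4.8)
The plan is to show directly that $\frac{d}{dt}\mathcal{E}_t \le 0$ along any curve satisfying \eqref{Eq:Frankd}, and then to read off the $O(e^{-\beta_t})$ rate by integrating this inequality. First I would differentiate \eqref{Eq:FrankE} by the product rule, using $\frac{d}{dt}e^{\beta_t} = \dot\beta_t e^{\beta_t}$, to get
\begin{align*}
\frac{d}{dt}\mathcal{E}_t = \dot\beta_t e^{\beta_t}\big(f(X_t)-f(x)\big) + e^{\beta_t}\langle \nabla f(X_t), \dot X_t\rangle.
\end{align*}
Next I would eliminate the velocity using the first relation \eqref{Eq:Frank2}, which rearranges to $\dot X_t = \dot\beta_t\,(Z_t - X_t)$; substituting yields
\begin{align*}
\frac{d}{dt}\mathcal{E}_t = \dot\beta_t e^{\beta_t}\Big( f(X_t) - f(x) + \langle \nabla f(X_t), Z_t - X_t\rangle \Big).
\end{align*}

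Then I would invoke convexity of $f$ in the subgradient form $f(X_t) - f(x) \le \langle \nabla f(X_t), X_t - x\rangle$, which collapses the bracket to $\langle \nabla f(X_t), Z_t - x\rangle = -\langle \nabla f(X_t), x - Z_t\rangle$. The second relation \eqref{Eq:Frank1} asserts precisely that this inner product is $\le 0$ for every $x \in \X$, and since $\beta$ is increasing (so that $\dot\beta_t^{-1}$ in \eqref{Eq:Frank2} makes sense and $\dot\beta_t e^{\beta_t} \ge 0$), we conclude $\frac{d}{dt}\mathcal{E}_t \le 0$; hence $\mathcal{E}_t$ is a Lyapunov function for \eqref{Eq:Frankd}. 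Integrating, $\mathcal{E}_t \le \mathcal{E}_0$ gives $f(X_t) - f(x) \le e^{-\beta_t}e^{\beta_0}\big(f(X_0) - f(x)\big)$, and taking $x = x^\ast$ establishes the claimed $O(e^{-\beta_t})$ convergence rate to the minimum value.

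There is essentially no technical obstacle here: the entire argument is one differentiation, one application of the subgradient inequality, and one application of \eqref{Eq:Frank1}, and it requires no smoothness of $f$ and not even the ideal scaling \eqref{Eq:IdeScaBet} beyond $\dot\beta_t \ge 0$. The only point worth emphasizing in the write-up is the bookkeeping that makes the $Z_t$-term produced by the dynamics cancel against the $X_t$-term produced by convexity, leaving exactly the quantity $\langle \nabla f(X_t), x - Z_t\rangle$ that \eqref{Eq:Frank1} controls; this is the continuous-time analogue of why minimizing a linear model of $f$ over $\X$ (a Frank--Wolfe step) can play the role that nonnegativity of the Bregman divergence played in Proposition~\ref{Prop:WeakLyap}.
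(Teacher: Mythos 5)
Your proof is correct and is essentially the paper's own argument run in the opposite direction: the paper starts from the variational inequality $0 \le \dot\beta_t e^{\beta_t}\langle \nabla f(X_t), x - Z_t\rangle$, expands $Z_t$ via \eqref{Eq:Frank2}, and applies convexity to arrive at $\frac{d}{dt}\mathcal{E}_t \le 0$, whereas you differentiate $\mathcal{E}_t$ first and land on the same inner product; the algebraic content is identical. Your observation that only $\dot\beta_t \ge 0$ (rather than the full ideal scaling) is used is also consistent with the paper's proof.
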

The proof of this Proposition is in Appendix~\ref{App:franklyap}. Here, we will analyze two Frank-Wolfe algorithms that arise from dynamics~\eqref{Eq:Frankd}. % For all of these results we will take $\mathcal{X}$ to be convex and compact so that update~\eqref{Eq:Frank1} is well defined. 
Applying the backward-Euler scheme to~\eqref{Eq:Frank2} and~\eqref{Eq:Frank1}, with the same approximations, $\frac{d}{dt} X_t =\frac{x_{k+1} - x_k}{\delta}$, $\frac{d}{dt} e^{\beta_t} = \frac{A_{k+1} - A_k}{\delta}$, and denoting  $\tau_k = \frac{A_{k+1} - A_k}{A_{k+1}}$, we obtain the variational conditions for the following algorithm:
\begin{subequations}\label{eq:Frank}
\begin{align}
z_k &= \arg \min_{z \in \X}\,\,  \langle \nabla f(x_k), z\rangle, \label{Eq:XSeqFrank1}\\
x_{k+1} &= \tau_k z_k + (1- \tau_k) x_k.\label{Eq:ZSeqFrank1}
\end{align}
\end{subequations}
Update~\eqref{Eq:XSeqFrank1} requires the assumptions that $\mathcal{X}$ be convex and compact; under this assumption, \eqref{Eq:XSeqFrank1} satisfies
\begin{align*}
0 \leq \langle \nabla f(x_k), x - z_k \rangle, \forall x \in \mathcal{X},
\end{align*}
%To analyze algorithm~\eqref{eq:Frank}, we use the following Lyapunov function,
consistent with \eqref{Eq:Frank1}. The following proposition describes how a discretization of~\eqref{Eq:FrankE} can be used to analyze the behavior of algorithm~\eqref{eq:Frank}. 
\begin{proposition}\label{eq:propFrank}
Assume $f$ is convex and $\mathcal{X}$ is convex and compact. If f is $(1/\epsilon)$-smooth, using the Lyapunov function,
\begin{align}\label{eq:lyapfrank}
E_k = A_k (f(x_k) - f(x)),
\end{align}
we obtain the error bound, 
\begin{align*}
\frac{E_{k+1} - E_k}{\delta} \leq \varepsilon_{k+1},
\end{align*}
where the error for algorithm~\eqref{eq:Frank} scales as
\begin{align}\label{eq:bound1}
\varepsilon_{k+1} =\frac{A_{k+1} \tau_k^2}{2\epsilon\delta} \|z_k - x_k\|^2.
\end{align}
 If instead we assume $f$ has $(\epsilon,\nu)$-H\"older-continuous gradients~\eqref{eq:Hold}, the error in algorithm~\eqref{eq:Frank} now scales as
\begin{align}\label{eq:bound2}
\varepsilon_{k+1} = \frac{A_{k+1} \tau_k^{1+\nu}}{(1+\nu)\epsilon\delta} \|z_k - x_k\|^{1+\nu}.
\end{align}
\end{proposition}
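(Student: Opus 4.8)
The plan is to run the same three-line telescoping argument used in the proofs of Propositions~\ref{Prop:Forward1} and~\ref{Prop:Forward2}, but now with convexity (rather than strong/uniform convexity) and with the linear-minimization optimality of the Frank--Wolfe step playing the role of the mirror-descent variational inequality. First I would expand the one-step difference of the Lyapunov function~\eqref{eq:lyapfrank}, writing $\alpha_k = A_{k+1}-A_k = A_{k+1}\tau_k$ and $A_k = (1-\tau_k)A_{k+1}$, to get
\begin{align*}
E_{k+1}-E_k = A_{k+1}f(x_{k+1}) - A_k f(x_k) - \alpha_k f(x).
\end{align*}
Next, since~\eqref{Eq:ZSeqFrank1} gives $x_{k+1}-x_k = \tau_k(z_k-x_k)$ and $\X$ is convex (so the whole segment lies in $\X$), I would apply the descent lemma for $(1/\epsilon)$-smooth functions along this segment,
\begin{align*}
f(x_{k+1}) \leq f(x_k) + \tau_k\langle \nabla f(x_k), z_k-x_k\rangle + \tfrac{\tau_k^2}{2\epsilon}\|z_k-x_k\|^2,
\end{align*}
multiply by $A_{k+1}$, and substitute. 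Collecting terms and using $A_{k+1}\tau_k=\alpha_k$ produces
\begin{align*}
E_{k+1}-E_k \leq \alpha_k\big(f(x_k)-f(x)\big) + \alpha_k\langle\nabla f(x_k), z_k-x_k\rangle + \tfrac{A_{k+1}\tau_k^2}{2\epsilon}\|z_k-x_k\|^2.
\end{align*}

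The key step is then to kill the first two terms. The linear-minimization update~\eqref{Eq:XSeqFrank1} gives $\langle\nabla f(x_k), z_k\rangle \leq \langle\nabla f(x_k), x\rangle$ for every $x\in\X$ (this is exactly the variational form noted right after~\eqref{eq:Frank}), so $\langle\nabla f(x_k), z_k-x_k\rangle \leq \langle\nabla f(x_k), x-x_k\rangle$, and convexity of $f$ bounds the right-hand side by $f(x)-f(x_k)$. Hence $\alpha_k\langle\nabla f(x_k), z_k-x_k\rangle \leq \alpha_k(f(x)-f(x_k))$, which cancels the $\alpha_k(f(x_k)-f(x))$ term and leaves precisely $E_{k+1}-E_k \leq \tfrac{A_{k+1}\tau_k^2}{2\epsilon}\|z_k-x_k\|^2$; dividing by $\delta$ yields~\eqref{eq:bound1}.

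For the second bound I would repeat the argument verbatim, only replacing the smoothness descent lemma by the analogous inequality for $(\epsilon,\nu)$-H\"older-continuous gradients~\eqref{eq:Hold}, namely $f(y)\leq f(x)+\langle\nabla f(x), y-x\rangle + \tfrac{1}{(1+\nu)\epsilon}\|y-x\|^{1+\nu}$, which follows by integrating $\langle\nabla f(x+s(y-x))-\nabla f(x), y-x\rangle$ over $s\in[0,1]$. Applying it along the segment $[x_k,x_{k+1}]$ gives the error term $\tfrac{A_{k+1}\tau_k^{1+\nu}}{(1+\nu)\epsilon}\|z_k-x_k\|^{1+\nu}$ after dividing by $\delta$, which is~\eqref{eq:bound2}. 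There is no real obstacle here: the only mildly non-routine ingredient is the H\"older descent lemma, and the one point to be careful about is that convexity of $\X$ guarantees the interpolating segment stays in the feasible set so the descent inequalities apply; everything else is the standard telescoping bookkeeping already rehearsed in the earlier propositions.
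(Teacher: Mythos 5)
Your proposal is correct and follows essentially the same route as the paper's proof in Appendix~B.6: expand $E_{k+1}-E_k$, apply the (H\"older) descent lemma along the segment $x_{k+1}-x_k=\tau_k(z_k-x_k)$, and cancel the remaining terms using convexity of $f$ together with the linear-minimization optimality of $z_k$. The only cosmetic difference is that the paper proves only the H\"older case and recovers the smooth case by setting $\nu=1$ (and, incidentally, your version writes the exponent on $\tau_k$ correctly where the paper's appendix has a small typo with $\alpha_k$ in its place).
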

Taking $x = x^\ast$ we infer the convergence rates $O(1/\epsilon k)$ and $O(1/\epsilon k^{\nu})$, respectively. We provide a proof of Proposition~\ref{eq:propFrank} in Appendix~\ref{App:prop_Frank}.
 \section{Equivalence to Estimate Sequences}
 \label{Sec:Est}
 In this section, we connect our Lyapunov framework directly to estimate sequences. We derive continuous-time estimate sequences directly from our Lyapunov function and demonstrate how these two techniques are equivalent.

\subsection{Estimate sequences}
We provide a brief review of the technique of estimate sequences~\cite{Nesterov04}.
We begin with the following definition.
\begin{definition}\cite[2.2.1]{Nesterov04} A pair of sequences $\{\phi_k(x)\}_{k=1}^\infty$ and $\{A_k\}_{k=0}^\infty$ $A_k \geq1$ is called an {\em estimate sequence} of function $f(x)$ if 
\[ 
A_k^{-1} \rightarrow 0,
\]
and, for any $x \in \R^n$ and for all $k \geq 0$, we have 
\begin{equation}\label{Eq:Ineq1}
\phi_k(x) \leq \Big(1 - A_k^{-1}\Big)f(x) + A_k^{-1}\phi_0(x).
\end{equation}
\end{definition}
\noindent The following lemma, due to Nesterov, explains why estimate sequences are useful.
%
%
%Lemma 
\begin{lemma}\label{Lem:Nest2.2.1} \cite[2.2.1]{Nesterov04}
If for some sequence $\{x_k\}_{k\geq0}$ we have 
\begin{equation}\label{Eq:Seq}
f(x_k) \leq \phi_k^\ast \equiv \min_{x \in \X} \phi_k(x),
\end{equation}
then $f(x_k) - f(x^\ast) \leq A_k^{-1} [\phi_0(x^\ast) - f(x^\ast)]$. 
\end{lemma}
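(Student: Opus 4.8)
The plan is to chain the defining inequality of an estimate sequence together with the hypothesis \eqref{Eq:Seq}, both evaluated at the single point $x = x^\ast$. First I would observe that, since $\phi_k^\ast \equiv \min_{x\in\X}\phi_k(x) \le \phi_k(x^\ast)$ by definition of the minimum, the hypothesis $f(x_k)\le \phi_k^\ast$ immediately gives $f(x_k) \le \phi_k(x^\ast)$.

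Next I would instantiate the estimate-sequence property \eqref{Eq:Ineq1} at $x = x^\ast$, which reads $\phi_k(x^\ast) \le (1 - A_k^{-1}) f(x^\ast) + A_k^{-1}\phi_0(x^\ast)$. Combining this with the previous display yields $f(x_k) \le (1 - A_k^{-1}) f(x^\ast) + A_k^{-1}\phi_0(x^\ast)$. Subtracting $f(x^\ast)$ from both sides and collecting the $A_k^{-1}$ terms then gives $f(x_k) - f(x^\ast) \le A_k^{-1}\big(\phi_0(x^\ast) - f(x^\ast)\big)$, which is exactly the claimed bound. Since $A_k^{-1}\to 0$, this also shows $f(x_k)\to f(x^\ast)$, explaining the utility of estimate sequences.

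I do not expect any real obstacle here: the lemma is a two-line consequence of the definitions, and the only points requiring a moment's care are purely bookkeeping — that the minimum is taken over the same set $\X$ containing $x^\ast$, so that $\phi_k^\ast \le \phi_k(x^\ast)$ is legitimate, and that $A_k\ge 1$ keeps the coefficient $1 - A_k^{-1}$ nonnegative so the intermediate convex-combination bound is meaningful. No smoothness or convexity of $f$ beyond what is already assumed is needed for this step.
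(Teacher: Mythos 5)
Your proposal is correct and follows essentially the same chain as the paper's proof: bound $f(x_k)$ by $\phi_k^\ast \le \phi_k(x^\ast)$, invoke the estimate-sequence inequality~\eqref{Eq:Ineq1} at $x = x^\ast$, and rearrange. The paper phrases the middle step as minimizing the right-hand side of~\eqref{Eq:Ineq1} over $\X$ before specializing to $x^\ast$, but this is the same argument.
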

\begin{proof}The proof is straightforward:
\begin{align*}
f(x_k) \overset{\eqref{Eq:Seq}}{\leq} \phi_k^\ast \equiv \min_{x \in \X} \phi_k(x) &\overset{\eqref{Eq:Ineq1}}{\leq}\min_{x \in\X}\left[\Big(1 - A_k^{-1}\Big)f(x) + A_k^{-1}\phi_0(x)\right]\leq \Big(1 - A_k^{-1}\Big)f(x^\ast) + A_k^{-1}\phi_0(x^\ast).
\end{align*}
Rearranging gives the desired inequality.
\end{proof}
Notice that this definition is not constructive. Finding sequences which satisfy these conditions is a non-trivial task. The next proposition, formalized by Baes in \cite{Baes09} as an  extension of Nesterov's Lemma 2.2.2 \cite{Nesterov04}, provides guidance for constructing estimate sequences. This construction is used in \cite{Nesterov04, Nesterov05, Nesterov08, Baes09, Nesterov15, NesterovCond15}, and is, to the best of our knowledge, 
the only known formal way to construct an estimate sequence.  We will see below that this particular class of estimate sequences can be turned into our Lyapunov functions with a few algebraic manipulations (and vice versa).
\begin{proposition}\cite[2.2]{Baes09}\label{Eq:Prop}
 Let $\phi_0: \X \rightarrow \R$ be a convex function such that $\min_{x \in \X} \phi_0(x)\geq f^\ast$. Suppose also that we have a sequence $\{f_k\}_{k\geq 0}$ of functions from $\X$ to $\R$ that underestimates $f$:
\begin{align}\label{Eq:Under}
f_k(x) \leq f(x) \quad \text{ for all $x \in \X$ and all $k \geq 0$}.
\end{align}
Define recursively $A_0 = 1$, $\tau_k = \frac{A_{k+1} - A_k}{A_{k+1}} := \frac{\alpha_{k}}{A_k}$, 
%\begin{align}
%\alpha_{k} &:= A_{k+1} - A_k \label{Eq:alpha}\\ 
%\tau_k &:= \frac{a_{k}}{A_{k+1}},\label{Eq:tau}
%\end{align}
and 
\begin{equation}\label{Eq:Est}
\phi_{k+1}(x) := (1 - \tau_k) \phi_k(x) + \tau_k f_k(x) = A_{k+1}^{-1} \left(A_0\phi_0(x) + \sum_{i=0}^k a_i f_i(x)\right),
\end{equation}
for all $k\geq 0$. Then $\left(\{\phi_k\}_{k\geq0}, \{A_k\}_{k\geq0}\right)$ is an estimate sequence.
\end{proposition}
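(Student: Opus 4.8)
The plan is to prove both defining properties of an estimate sequence by a straightforward induction on $k$, after first checking that the two expressions given for $\phi_{k+1}$ in \eqref{Eq:Est} agree. For that preliminary identity I would multiply the convex-combination recursion $\phi_{k+1} = (1-\tau_k)\phi_k + \tau_k f_k$ through by $A_{k+1}$ and use $1-\tau_k = A_k/A_{k+1}$ together with $\tau_k = a_k/A_{k+1}$ (writing $a_k = A_{k+1}-A_k$) to obtain the telescoping relation $A_{k+1}\phi_{k+1} = A_k\phi_k + a_k f_k$; unrolling this from $A_0\phi_0$ yields the closed form. The condition $A_k^{-1}\to 0$ requires nothing beyond recording it: it is guaranteed by the choice of the $A_k$ as an increasing sequence tending to infinity (equivalently, $\sum_k a_k$ diverges), which is part of the hypotheses implicit in the construction.

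The substance is inequality \eqref{Eq:Ineq1}, which I would establish by induction on $k$. The base case $k=0$ is an equality, since $A_0 = 1$ makes the right-hand side collapse to $\phi_0(x)$. For the inductive step, assume $\phi_k(x)\le (1-A_k^{-1})f(x) + A_k^{-1}\phi_0(x)$ for every $x\in\X$. Substituting this bound into the recursion and then using the underestimation hypothesis \eqref{Eq:Under}, namely $f_k(x)\le f(x)$, to replace the term $\tau_k f_k(x)$ by $\tau_k f(x)$ (valid because $\tau_k\ge 0$), one gets an upper bound on $\phi_{k+1}(x)$ that is an affine combination of $f(x)$ and $\phi_0(x)$: the coefficient of $\phi_0(x)$ is $(1-\tau_k)A_k^{-1}$ and that of $f(x)$ is $(1-\tau_k)(1-A_k^{-1}) + \tau_k$. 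The only algebraic point is the identity $1-\tau_k = A_k/A_{k+1}$, which turns the first coefficient into exactly $A_{k+1}^{-1}$ and, after the simplification $(1-\tau_k)(1-A_k^{-1})+\tau_k = 1-(1-\tau_k)A_k^{-1}$, the second into $1-A_{k+1}^{-1}$. This is precisely \eqref{Eq:Ineq1} at index $k+1$, closing the induction.

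I do not expect a genuine obstacle here; the proof is essentially bookkeeping. The only points requiring care are keeping the relations among $\tau_k$, $\alpha_k$, $a_k$, and the ratio $A_k/A_{k+1}$ consistent with the paper's conventions, and making sure the passage from $f_k$ to $f$ uses the inequality only in the direction in which it holds (it does, since the weight $\tau_k$ is nonnegative). The hypothesis $\min_{x\in\X}\phi_0(x)\ge f^\ast$ plays no role in verifying the estimate-sequence property itself; it is recorded because it is exactly the condition under which Lemma~\ref{Lem:Nest2.2.1} converts such a sequence into a meaningful convergence guarantee, so I would simply carry it along without invoking it.
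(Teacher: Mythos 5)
Your proof is correct: the paper itself does not prove Proposition~\ref{Eq:Prop} (it defers to Baes~\cite{Baes09}), and your induction --- base case from $A_0=1$, inductive step combining the recursion with $f_k\le f$ and the identity $1-\tau_k = A_k/A_{k+1}$ --- is exactly the standard argument in that reference. The only microscopic point worth recording is that substituting the inductive hypothesis into the recursion also requires $1-\tau_k\ge 0$ (i.e.\ the $A_k$ are increasing), which holds alongside the $\tau_k\ge 0$ you already note.
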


\noindent From \eqref{Eq:Seq} and \eqref{Eq:Est}, we observe that the following  invariant:
\begin{align}\label{Eq:EstSeqInv}
A_{k+1} f(x_{k+1}) &\leq \min_x A_{k+1} \phi_{k+1}(x) = \min_x  \sum_{i=0}^k \alpha_i f_i(x) +  A_0\phi_0(x),
\end{align}
is maintained. In \cite{Nesterov15, NesterovCond15}, this technique was extended to incorporate an error term $\{\tilde \varepsilon_k\}_{k=1}^\infty$,
%\begin{subequations}\label{Eq:Est2}
\begin{align*}
\phi_{k+1}(x) - A_{k+1}^{-1}\tilde \varepsilon_{k+1} &:= (1 - \tau_k) \Big(\phi_k(x) - A_k^{-1}\tilde \varepsilon_{k}\Big) + \tau_k f_k(x) = A_{k+1}^{-1} \Big(A_0(\phi_0(x) - \tilde \varepsilon_0) + \sum_{i=0}^k a_i f_i(x)\Big), 
\end{align*}
where $\varepsilon_k \geq 0, \forall k$.
%\begin{align*}
%\phi_{k+1}(x) - \frac{\tilde \varepsilon_{k+1}}{A_{k+1}} &:= (1 - \tau_k) \Big(\phi_k(x) - \frac{\tilde \varepsilon_{k}}{A_k}\Big) + \tau_k f_k(x)\\& = \frac{1}{A_{k+1}} \Big(A_0(\phi_0(x) - \tilde \varepsilon_0) + \sum_{i=0}^k a_i f_i(x)\Big). 
%\end{align*}
%\end{subequation}
Rearranging, we have the following bound: 
%\begin{subequations}\label{Eq:EstSeqInv2}
\begin{align*}
A_{k+1} f(x_{k+1}) &\leq \min_x A_{k+1}\phi_{k+1}(x)= \min_x  \sum_{i=0}^k \alpha_i f_i(x) +  A_0\Big(\phi_0(x) - A_0^{-1}\tilde \varepsilon_0\Big) + \tilde \varepsilon_{k+1}.
\end{align*}
%\end{subequations}
Notice that an argument analogous to that of Lemma~\ref{Lem:Nest2.2.1} holds:
\begin{align*}
A_{k+1} f(x_{k+1}) \leq \sum_{i=0}^k \alpha_i f_i(x^\ast) + A_0 (\phi_0(x^\ast) - \tilde \varepsilon_0)  + \tilde \varepsilon_{k+1}\notag &\overset{\eqref{Eq:Under}}{\leq} \sum_{i=0}^k \alpha_i f(x^\ast) +  A_0\phi_0(x^\ast)  + \tilde \varepsilon_{k+1}\notag\\ &= A_{k+1} f(x^\ast) +  A_0\phi_0(x^\ast) + \tilde \varepsilon_{k+1}.
\end{align*}
%\end{subequations}
 Rearranging, we obtain the desired bound,
\begin{align*}
f(x_{k+1}) - f(x^\ast) \leq\frac{A_0\phi_0(x^\ast)  + \tilde \varepsilon_{k+1}}{A_{k+1}}.
\end{align*}
Thus, we simply need to choose our sequences $\{A_k, \phi_k, \tilde \varepsilon_k\}_{k=1}^\infty$ to ensure $\tilde \varepsilon_{k+1}/ A_{k+1} \rightarrow 0$.  The following table illustrates the choices of $\phi_k(x)$ and $\tilde \varepsilon_k$ for the four methods discussed earlier. 
\begin{center}
\begin{table}[h!]\label{Eq:Table}
\tabcolsep=0.2cm
\begin{center}
\begin{tabular}{|c|c|c|c|}
\hline 
Algorithm & $f_i(x)$ &  $\phi_k(x)$ & $\tilde \varepsilon_{k+1}$ \\ \hline
$\substack{\text{Quasi-Monotone Subgradient Method}}$ & linear & $\frac{1}{A_k}D_h(x, z_k) + f(x_k)$ & $\frac{1}{2}\sum_{i=1}^{k+1} \frac{(A_i - A_{i-1})^2}{2}G^2$  \\ \hline
$\substack{\text{Accelerated Gradient Method} \\\text{ (Weakly Convex)}} $& linear  &$\frac{1}{A_k}D_h(x, z_k) +  f(x_k) $ & 0 \\  \hline
$\substack{\text{Accelerated Gradient Method}\\\text{ (Strongly Convex)}} $  & quadratic &$  f(x_k)  + \frac{\mu}{2}\|x - z_k\|^2$ &  0 \\ \hline
$\substack{\text{Conditional Gradient Method }}$ & linear  & $f(x_k)$ & $ \frac{1}{2\epsilon}\sum_{i=1}^{k+1}\frac {(A_{i} - A_{i-1})^2}{A_{i}} diam(\X)^2$\\
\hline
\end{tabular}
\end{center}
\caption{Choices of estimate sequences for various algorithms}
\label{table:Table}
\end{table}
\end{center}
In Table~\ref{table:Table} ``linear'' is defined as 
%\begin{equation}
$f_i(x) = f(x_i) + \langle \nabla f(x_i), x - x_i\rangle,$
%\end{equation}
and ``quadratic'' is defined as
%\begin{equation}
$f_i(x) = f(x_i) + \langle \nabla f(x_i), x - x_i\rangle + \frac{\mu}{2}\|x - x_i\|^2.$ The estimate-sequence argument is inductive; one must know the three sequences $\{\varepsilon_{k}, A_k, \phi_k(x)\}$ a priori in order to check the invariants hold. This aspect 
of the estimate-sequence technique has made it hard to discern its structure and 
scope. 
%\end{equation}

\subsection{Equivalence to  Lyapunov functions}
We now demonstrate an equivalence between these two frameworks.  The continuous-time view shows that the errors in both the Lyapunov function and estimate sequences are due to discretization errors. We demonstrate how this works for accelerated methods, and defer the proofs for the other algorithms discussed earlier in the paper to Appendix~\ref{App:EstSeq}.
%\subsubsection{Accelerated Gradient Descent}
\paragraph{Equivalence in discrete time.}
The discrete-time estimate sequence~\eqref{Eq:Est} for accelerated gradient descent can be written:
\begin{align*}
\phi_{k+1}(x) &:= f(x_{k+1}) + A_{k+1}^{-1}D_h(x, z_{k+1}) \\
& \overset{\eqref{Eq:Est}}{=} (1 - \tau_k) \phi_k(x)  + \tau_k f_k(x) \\
&\overset{\text{Table}~\ref{table:Table}}{=} \Big(1 - A_{k+1}^{-1}\alpha_k \Big)\Big(f(x_k) +A_k^{-1} D_h(x, z_k)\Big) +A_{k+1}^{-1} \alpha_k f_k(x).
\end{align*}
Multiplying through by $A_{k+1}$, we have the following argument, which follows 
directly from our definitions:
\begin{align*}
A_{k+1} f(x_{k+1}) + D_h(x, z_{k+1}) &= (A_{k+1} - \alpha_k) \Big(f(x_k) + A_k^{-1}D_h(x, z_k)\Big) + \alpha_k f_k(x)\\
%&= (A_{k+1} - (A_{k+1} - A_k)) \Big(f(x_k) + A_k^{-1} D_h(x, z_k)\Big) \alpha_kf_k(x)\\
&=  A_k \Big(f(x_k) + A_k^{-1}D_h(x, z_k) \Big)+ (A_{k+1} - A_k) f_k(x)\\
& \leq  A_k f(x_k) + D_h(x, z_k)+ (A_{k+1} - A_k) f(x).
\end{align*}
The last inequality follows from definition~\eqref{Eq:Under}. Rearranging, we obtain the inequality $E_{k+1} \leq E_k$ for our Lyapunov function~\eqref{eq:lyap}. 
Going the other direction, from our Lyapunov analysis we can derive the following bound:
%\begin{subequations}%\label{Eq:EstSeqErr}
\begin{align}
E_k &\leq E_0\notag\\
A_{k}(f(x_{k}) - f(x)) + D_h(x, z_{k}) &\leq A_0 (f(x_0) - f(x)) + D_h(x, z_0) \notag\\
A_{k}\Big(f(x_{k}) - A_k^{-1}D_h(x, z_{k})\Big) &\leq (A_k -  A_0) f(x) + A_0\Big(f(x_0)+ A_0^{-1}  D_h(x^\ast, z_0)\Big) \notag \\
A_k \phi_k(x) &\leq (A_k -A_0) f(x) + A_0 \phi_0(x). \label{Eq:Sequence}
\end{align}
%\end{subequations}
Rearranging, we obtain the estimate sequence~\eqref{Eq:Ineq1}, with $A_0 = 1$:
%\begin{subequations}
\begin{align*}
\phi_k(x) &\leq \Big(1 -A_k^{-1}A_0\Big) f(x) + A_k^{-1}A_0 \phi_0(x) =\Big(1 - A_k^{-1}\Big)f(x) + A_k^{-1}\phi_0(x).
\end{align*}
%\end{subequations}
Writing $\mathcal{E}_t \leq \mathcal{E}_0$, one can simply rearrange terms to extract an estimate sequence:
\begin{align*}
%e^{\beta_t}(f(X_t) - f(x^\ast)) + D_h\left(x, Z_t\right) &\leq e^{\beta_0}(f(X_0) - f(x^\ast)) + D_h\left(x, Z_0\right)\\
f(X_t) + e^{-\beta_t}D_h\left(x, Z_t\right) &\leq \Big(1- e^{-\beta_t}e^{\beta_0}\Big) f(x^\ast) +  e^{-\beta_t}e^{\beta_0}\Big(f(X_0)  + e^{-\beta_0}D_h\left(x, Z_0\right)\Big).
\end{align*}
Comparing this to~\eqref{Eq:Sequence}, matching terms allows us to extract the 
continuous-time estimate sequence
$\{\phi_t(x) , e^{\beta_t}\}$, where $\phi_t(x) = f(X_t) + e^{-\beta_t}D_h(x, Z_t)$.

%Going the other direction in continuous time entails extracting the continuous-time invariant formulated by Baes~\eqref{Eq:Est} in order to show $\frac{d}{dt} \mathcal{E}_t\leq 0$. 
%the above discussion shows that  is a continuous-time estimate sequence. %In Appendix~\ref{App:EstSeq} we extend this argument to the quasi-monotone subgradient method by adding an error term $\varepsilon_k$.
 
\section{Further Observations}
The dynamical perspective can be extended to the derivation and analysis 
of a range of other methods. In this section, we provide sketches of some
of these analyses, providing a detailed treatment in Appendix~\ref{Sec:AddObv}.
\paragraph{Proximal methods.}
Methods for minimizing the composite of two convex functions, 
$\varphi(x) = f(x) + \psi(x)$, were introduced by Nesterov~\cite{Nesterov13} 
and studied by Beck and Teboulle~\cite{BeckTeboulle09}, Tseng~\cite{Tseng08} 
and several others. In Appendix~\ref{Sec:Prox}, we present a dynamical 
perspective on these methods and show how to recover their convergence
theory via the Lyapunov functions presented in this paper. 
\paragraph{Stochastic methods.}
We sketch a high-level view of algorithms which use stochastic estimates 
of gradients, and provide a more detailed analysis in Appendix~\ref{App:Stoch}. 
Our scope is a Lyapunov-based analysis of four algorithms---stochastic 
mirror descent with momentum, accelerated (proximal) coordinate 
descent~\cite{Zeyuan16,NestStich17,Tu17,AccCordProx1,AccCordProx2}, 
accelerated stochastic variance reduction (SVRG)~\cite{Zhu17}, and 
accelerated stochastic composite methods~\cite{Lan12}.  We study these
methods under two smoothness settings and present proofs for several 
explicit methods. Broadly, we consider algorithms~\eqref{eq:agd}, 
\eqref{eq:str1} and~\eqref{Eq:StrongConv21}, where stochastic gradients 
are used instead of full gradients. For these methods, we show the bound 
$\mathbb{E}[E_{k+1}]- E_{k} \leq \mathbb{E}[\varepsilon_{k+1}]$
for Lyapunov function~\eqref{eq:nlyap} and
$
\mathbb{E}[E_{k+1}]- E_{k} \leq -\tau_k E_k + \mathbb{E}[\varepsilon_{k+1}]
$
for Lyapunov function~\eqref{eq:lyapdd}, where the expectation is taken 
conditioned on the previous state. By summing, we obtain convergence rates 
for the aforementioned algorithms, provided the sequence $(A_i)_{i=1}^\infty$ 
is chosen so that $\mathbb{E}[\sum_{i=1}^\infty \varepsilon_i ]< \infty$.

\section{Discussion}
The main contributions in this paper are twofold: We have presented a 
unified analysis of a wide variety of algorithms using three Lyapunov 
functions--\eqref{eq:lyap},~\eqref{eq:lyapdd} and \eqref{eq:lyapfrank},
and we have demonstrated the equivalence between Lyapunov functions and 
estimate sequences, under the formalization of the latter due to Baes~\cite{Baes09}.
More generally, we have provided a dynamical-systems perspective that 
builds on Polyak's early intuitions, and elucidates connections between 
discrete-time algorithms and continuous-time, dissipative second-order 
dynamics.  We believe that the dynamical perspective renders the design 
and analysis of accelerated algorithms for optimization particularly 
transparent, and we also note in passing that Lyapunov analyses for 
non-accelerated gradient-based methods, such as mirror descent and 
natural gradient descent, can be readily derived from analyses of 
gradient-flow dynamics.  

We close with a brief discussion of some possible directions for future 
work.  First, we remark that requiring a continuous-time Lyapunov 
function to remain a Lyapunov function in discrete time places significant 
constraints on which ODE solvers can be used.  In this paper, we show 
that we can derive new algorithms using a restricted set of ODE techniques 
(several of which are nonstandard) but it remains to be seen if other 
methods can be applied in this setting.  Techniques such as the midpoint 
method and Runge Kutta provide more accurate solutions of ODEs than Euler 
methods~\cite{Butcher20001}.  Is it possible to analyze such techniques 
as optimization methods?  We expect that these methods do not achieve 
better asymptotic convergence rates, but may inherit additional favorable 
properties.  Determining the advantages of such schemes could provide 
more robust optimization techniques in certain scenarios.  In a similar
vein, it would be of interest to analyze the symplectic integrators
studied by~\cite{BetancourtJordanWilson} within our Lyapunov framework.

Several restart schemes have been suggested for the strongly convex setting based on the momentum dynamics~\eqref{eq:el1}. In many settings,  while the Lipschitz parameter can be estimated using backtracking line-search, the strong convexity parameter is often hard---if not impossible---to estimate~\cite{SuBoydCandes}.  Therefore, many authors~\cite{ODonoghue15,SuBoydCandes,Krichene15} have developed heuristics to empirically speed up the convergence rate of the ODE (or discrete-time algorithm), based on model misspecification. In particular, both Su, Boyd, and Candes~\cite{SuBoydCandes} and Krichene, Bayen and Bartlett~\cite{Krichene15} develop restart schemes designed for the strongly convex setting based on the momentum dynamics~\eqref{eq:el1}. Our analysis suggests that restart schemes based on the dynamics~\eqref{eq:el2} might lead to better results. 

%As mentioned in Section~\ref{Sec:StrongConv}, the dynamics~\eqref{Eq:StrongDyn} developed for the setting when $f$ is strongly convex can be viewed variationally, as the Euler-Lagrange equation for a different Lagrangian functional. While outside the scope of this paper, the Lagrangian~\eqref{Eq:LagStrongConv} can be generalized to a non-Euclidean setting. In follow-up work, we plan to study this second Lagrangian family and its properties further. %A natural question is whether the 
%Other extensions of the Lyapunov analysis left for followup work are to nondeterministic settings such as accelerated coordinate ascent and stochastic gradient descent. A natural question we hope to answer is whether a dynamical perspective can be fruitfully applied in these settings. 

Earlier work by Drori and Teboulle~\cite{DroriTeboulle13}, Kim and Fessler~\cite{Kim2016}, Taylor \emph{et al}~\cite{Taylor2016}, and Lessard~\emph{et al}~\cite{Lessard14} have shown that optimization algorithms can be analyzed by solving convex programming problems.  In particular, Lessard~\emph{et al} show that Lyapunov-like potential functions called \emph{integral quadratic constraints} can be found by solving a constant-sized semidefinite programming problem. It would be interesting to see if these results can be adapted to directly search for Lyapunov functions like those studied in this paper.  This would provide a method to automate the analysis of new techniques, possibly moving beyond momentum methods to novel families of optimization techniques.

\section*{Acknowledgements}
We would like to give special thanks to Andre Wibisono as well as Orianna Demassi 
and Stephen Tu  for the many helpful discussions involving this paper.   
ACW was supported by an NSF Graduate Research Fellowship. This work was 
supported in part by the Army Research Office under grant number W911NF-17-1-0304
and by the Mathematical Data Science program of the Office of Naval Research.

\bibliographystyle{siamplain}
\bibliography{refs.bib}
%%\label{table:Table}
%\end{sidewaystable}
%%\end{minipage}
%%\end{table}
%%\end{center}
\clearpage
\appendix
\section{Dynamics}
\label{App:sec_dyn}
\subsection[Proof of Proposition]{Proof of Proposition~\ref{Prop:EL}}
\label{App:Prop_EL}
We compute the Euler-Lagrange equation for the second Bregman Lagrangian~\eqref{eq:breg2}. 
Denote $z = x + e^{-\alpha_t} \dot x$. The partial  
derivatives of the Bregman Lagrangian can be written,
%\begin{subequations}
\begin{align*}
\frac{\partial\mathcal{L}}{\partial v} (X_t, \dot X_t, t)&= \mu  e^{ \beta_t + \gamma_t} \left(\nabla h(Z_t) - \nabla h(X_t)\right)  \\
\frac{\partial\mathcal{L}}{\partial x}(X_t, \dot X_t, t) &= \mu e^{\alpha_t} \frac{\partial\mathcal{L}}{\partial v} (X_t, \dot X_t, t) - \mu e^{ \beta_t + \gamma_t}\frac{d}{dt}\nabla h(X_t) - e^{\alpha_t + \beta_t + \gamma_t} \nabla f(X_t).
\end{align*}
%\end{subequations*}
We also compute the time derivative of the momentum  $p = \frac{\partial\mathcal{L}}{\partial v} (X_t, \dot X_t, t)$,
\begin{align*}
\frac{d}{dt}\frac{\partial\mathcal{L}}{\partial v}(X_t, \dot X_t, t) &= (\dot \beta_t + \dot \gamma_t) \frac{\partial\mathcal{L}}{\partial v} (X_t, \dot X_t, t)+ \mu  e^{ \beta_t + \gamma_t}\frac{d}{dt}\nabla h(Z_t)  - \mu  e^{\beta_t + \gamma_t} \frac{d}{dt}\nabla h(X_t). 
\end{align*} 
The terms involving $\frac{d}{dt}\nabla h(X)$ cancel and the terms involving the momentum will simplify under the scaling condition~\eqref{Eq:IdeScaGam} when computing the Euler-Lagrange equation $ \frac{\partial\mathcal{L}}{\partial x}(X_t, \dot X_t, t) = \frac{d}{dt}\frac{\partial\mathcal{L}}{\partial v}(X_t, \dot X_t, t)$. Compactly, the Euler-Lagrange equation can be written 
%Thus, the Euler-Lagrange is given by
\begin{align*}%\label{Eq:UnSimEL}
\frac{d}{dt} \mu \nabla h(Z_t)  =  - \dot \beta_t\mu \left(\nabla h(Z_t) - \nabla h(X_t)\right) - e^{\alpha_t} \nabla f(x).
\end{align*}

\paragraph{Remark.}
It is interesting to compare with the partial derivatives of the 
first Bregman Lagrangian~\eqref{eq:breg},
\begin{subequations}
\begin{align*}
\frac{\partial\mathcal{L}}{\partial v} (X_t, \dot X_t, t)&= e^{ \gamma_t} \left(\nabla h(Z_t) - \nabla h(X_t)\right) \\
\frac{\partial\mathcal{L}}{\partial x}(X_t, \dot X_t, t) &=  e^{\alpha_t} \frac{\partial\mathcal{L}}{\partial v} (X_t, \dot X_t, t) -  e^{\gamma_t}\frac{d}{dt}\nabla h(X_t) - e^{\alpha_t + \beta_t + \gamma_t} \nabla f(X_t),
%\label{eq:mom}
\end{align*}
\end{subequations}
as well as the derivative of the momentum,
\begin{align*}
\frac{d}{dt}\frac{\partial\mathcal{L}}{\partial v} (X_t, \dot X_t, t)&= \dot \gamma_t \frac{\partial\mathcal{L}}{\partial v} (X_t, \dot X_t, t) + e^{ \gamma_t}\frac{d}{dt}\nabla h(Z_t) -  e^{ \gamma_t}\frac{d}{dt}\nabla h(X_t). 
\end{align*}
For Lagrangian~\eqref{eq:breg}, not only do the terms involving $\frac{d}{dt}\nabla h(X)$ cancel when computing the Euler-Lagrange equation, but the ideal scaling will also force the terms involving the momentum to cancel as well. 
\subsection{Deriving the Lyapunov functions}
\label{App:deriv_lyap}

\subsubsection[Proof of Proposition]{Proof of Proposition~\ref{Prop:WeakLyap}} 

We demonstrate how to derive the Lyapunov function~\eqref{eq:lyap} for the momentum dynamics~\eqref{eq:el1}; this derivation is similar in spirit to the Lyapunov analysis of mirror descent by Nemirovski and Yudin.  Denote $Z_t = X_t + e^{-\alpha_t} \dot X_t$. We have:
\begin{subequations}\label{eq:iden}
\begin{align}
 \frac{d}{dt}D_h\left(x, Z_t\right)  &= \frac{d}{dt}\left( h(x) - h(Z_t) - \langle \nabla h(Z_t), x- Z_t\rangle\right)  \notag\\
& = - \langle \nabla h(Z_t), \dot Z_t\rangle -\left\langle \frac{d}{dt} \nabla h(Z_t),x - Z_t\right\rangle  + \langle \nabla h(Z_t), \dot Z_t\rangle \notag\\ 
 &= -\left\langle \frac{d}{dt} \nabla h\left(Z_t\right), x - Z_t\right\rangle. \, \notag%\label{eq:apply-identity}\\
 \end{align}
 \end{subequations}
Using this identity, we obtain the following argument:
%\begin{small}
\begin{subequations}\label{Eq:LyapAnal}
\begin{align}
 \frac{d}{dt}D_h\left(x, Z_t\right)  
 &= -\left\langle \frac{d}{dt} \nabla h\left(Z_t\right), x - Z_t\right\rangle \, \notag\\%\label{eq:apply-identity}\\
&=    e^{\alpha_t +\beta_t}\left\langle\nabla f(X_t), x  - X_{t} - e^{-\alpha_t} \dot X_{t} \right\rangle  \, \label{eq:apply-dynamics}   \\
& =   e^{\alpha_t + \beta_t} \langle \nabla f(X_t), x - X_t \rangle\, - e^{ \beta_t} \langle \nabla f(X_t), \dot X_t\rangle \, \notag   \\
& = e^{\alpha_t+ \beta_t} \langle \nabla f(X_t), x - X_t \rangle\,  -   \frac{d}{dt}\left(e^{ \beta_t} f(X_t)\right)\,   + \dot \beta_t e^{\beta_t}  f(X_t)  \notag\\
&= \dot \beta_t e^{\beta_t} [f(X_t)+ \langle \nabla f(X_t), x - X_t \rangle] -\frac{d}{dt}\left(e^{ \beta_t} f(X_t)\right) +e^{\beta_t} \Big(e^{\alpha_t} - \dot \beta_t\Big)\langle \nabla f(X_t), x - X_t \rangle\, \notag  \\ %\label{Eq:Line9}\\
& \leq  \dot \beta_t e^{\beta_t}f(x) - \frac{d}{dt}\left(e^{ \beta_t} f(X_t)\right)\, \label{eq:apply-convexity}\\
& = - \frac{d}{dt} \Big\{e^{\beta_t}\left(f(X_t) - f(x)\right)\Big\} \label{Eq:cond1}.
\end{align}
\end{subequations}
%\end{small}
Here~\eqref{eq:apply-dynamics} uses the momentum dynamics%~\eqref{Eq:EL}.
~\eqref{Eq:ELH} and~\eqref{Eq:ELZ}. 
The inequality~\eqref{eq:apply-convexity} follows from the convexity of $f$. If $\dot \beta_t = e^{\alpha_t}$, simply by rearranging terms and taking $x = x^\ast$, we have shown that the function~\eqref{eq:Lyap1} has nonpositive derivative for all $t$ and is hence a Lyapunov function for the family of momentum dynamics~\eqref{eq:el1}. If $\dot \beta_t \leq e^{\alpha_t}$, the Lyapunov function is only decreasing for $x = x^\ast$. 

\subsubsection[Proof of Proposition]{Proof of Proposition~\ref{prop:strLyap}}
\label{app:strLyapproof}
We demonstrate how to derive the Lyapunov function~\eqref{eq:Lyap2} for the momentum dynamics~\eqref{eq:el2}. 
Using the same identity~\eqref{eq:iden}, we have the following initial,
\begin{subequations}
\begin{align*}
\frac{d}{dt} \Big\{ e^{\beta_t} \mu D_h\left(x, Z_t\right)\Big\} &=  - e^{\beta_t}\mu \left\langle \frac{d}{dt} \nabla h\left(Z_t\right), x - Z_t\right\rangle + \mu\dot \beta_t e^{\beta_t}D_h\left(x, Z_t\right)\notag \\
%& =  \mu \dot \beta_t e^{\beta_t} \left[ \left\langle \nabla h\left(Z_t\right) - \nabla h(X_t),x - X_t - \frac{1}{\dot \beta_t} \dot X_t\right\rangle +D_h\left(x, Z_t\right)\right]\notag
%   \\& \quad+e^{\alpha_t+ \beta_t} \left\langle \nabla f(X_t), x - X_t\right\rangle -  e^{\beta_t}  \left\langle \nabla f(X_t) ,\dot X_t\right\rangle\notag \\
& = \mu \dot \beta_t e^{\beta_t} \left[ \left\langle \nabla h\left(Z_t\right) - \nabla h(X_t),x - Z_t\right\rangle +D_h\left(x, Z_t\right)\right]  \notag\\
 & \quad + \dot \beta_t e^{\beta_t}\left\langle \nabla f(X_t), x - Z_t\right\rangle+ \left(e^{\alpha_t} - \dot \beta_t\right) \left\langle \nabla f(X_t), x - Z_t\right\rangle. \notag
 %&\quad -  e^{\beta_t}  \left\langle \nabla f(X_t) ,\dot X_t\right\rangle\notag 
 \end{align*}
 \end{subequations}
The Bregman three-point identity,
\begin{align}\label{eq:bregthree}
\langle \nabla h(Z_t) - \nabla h(X_t), x- Z_t\rangle + D_h(x, Z_t)   = D_h(x, X_t)-D_h(Z_t, X_t), 
\end{align}
will now be useful. Proceeding from the last line, we have
\begin{subequations}
\begin{align*}
\frac{d}{dt} \Big\{ e^{\beta_t} \mu D_h\left(x, Z_t\right)\Big\} & =  \dot \beta_t e^{\beta_t}\left[  \left\langle \nabla f(X_t), x - X_t\right\rangle  + \mu D_h(x, X_t)\right] - \mu\dot \beta_t e^{\beta_t} D_h\left( Z_t, X_t\right) \notag \\
  &\quad -  e^{\beta_t}  \left\langle \nabla f(X_t) ,\dot X_t\right\rangle   + \left(e^{\alpha_t} - \dot \beta_t\right) \left\langle \nabla f(X_t), x - X_t\right\rangle\notag\\
&\leq - \dot \beta_t e^{\beta_t}  (f(X_t) - f(x)) + \dot \beta_t e^{\beta_t}  f(X_t) -\frac{d}{dt} \left\{e^{\beta_t} f(X_t)\right\}\notag\\
&\quad   - \mu \dot \beta_t e^{\beta_t}  D_h\left( Z_t, X_t\right) + \left(e^{\alpha_t} - \dot \beta_t\right) \left\langle \nabla f(X_t), x - Z_t\right\rangle\notag\\
%& \leq  -\frac{d}{dt} \left\{ e^{\beta_t} (f(X_t) - f(x^\ast))\right\}  - \mu \dot \beta_t e^{\beta_t}  D_h\left( Z_t, X_t\right) \\ %\quad+ \left(e^{\alpha_t} - \dot \beta_t\right) \left\langle \nabla f(X_t), x - X_t\right\rangle\notag\\
& \leq  -\frac{d}{dt} \Big\{ e^{\beta_t} (f(X_t) - f(x))\Big\}.\notag
%& \leq -\dot \beta_t e^{\beta_t}\left(f(x) - f(x^\ast) + \mu D_h(x^\ast, x)\right)
%& =  -\dot \beta_t e^{\beta_t} \left\langle \nabla f(x), x^\ast - x - \frac{1}{\dot \beta_t} \dot x\right\rangle\\
%& =  -\dot \beta_t e^{\beta_t} \left\langle \nabla f(x), x^\ast - x\right\rangle -  e^{\beta_t}  \left\langle \nabla f(x) ,\dot x\right\rangle\\
%& = - \dot \beta_t e^{\beta_t} (f(x) - f(x^\ast)) + \dot \beta_t e^{\beta_t} f(x) - \frac{d}{dt} \left\{e^{\beta_t} f(x)\right\}\\
%& = -\frac{d}{dt} \left\{ e^{\beta_t} (f(x) - f(x^\ast))\right\}
\end{align*}
\end{subequations}
The first inequality follows from the $\mu$-uniform convexity of $f$ with respect to $h$.  The second inequality follows from nonnegativity of the Bregman divergence, and the ideal scaling condition~\eqref{Eq:IdeScaBet}, where we must take $x= x^\ast$ if $\dot \beta_t \leq e^{\alpha_t}$.
\section{Algorithms derived from dynamics~\eqref{eq:el1}}

%
%\subsection[Proof of Proposition]{Proof of~\cref{Prop:Forward1}}
%\label{App:PropForward}
%The implicit scheme~\eqref{Eq:AlgoForward12} satisfies the variational inequalities
%\begin{subequations}
%\begin{align}
%\nabla h(z_{k+1}) - \nabla h(z_k) &= -(A_{k+1}-A_k) \nabla f(x_{k+1})\label{eq:mirfor}\\
%A_{k}(x_{k+1} - x_k) &= \alpha_k(z_{k+1} - x_{k+1}).\label{eq:coupfor}
%\end{align}
%\end{subequations}
%Using these identities, we have the following argument,
%\begin{align*}
%E_{k+1} - E_k &= D_h(x, z_{k+1})  -  D_h(x, z_{k}) + A_{k+1}(f(x_{k+1}) - f(x)) - A_{k}(f(x_{k}) - f(x))\\
%& = - \langle  \nabla h(z_{k+1}) - \nabla h(z_k), x - z_{k+1}\rangle - D_h(z_{k+1}, z_k) + A_{k+1}(f(x_{k+1}) - f(x))- A_{k}(f(x_{k}) - f(x))\\
%& \overset{\eqref{eq:mirfor}}{=}  (A_{k+1}-A_k)\langle  \nabla f(x_{k+1}), x - z_{k+1}\rangle - D_h(z_{k+1}, z_k)+ A_{k+1}(f(x_{k+1}) - f(x))- A_{k}(f(x_{k}) - f(x))\\
%&\overset{\eqref{eq:coupfor}}{=} (A_{k+1}-A_k)\langle  \nabla f(x_{k+1}), x - x_{k+1}\rangle + A_k\langle  \nabla f(x_{k+1}), x_{k} - x_{k+1}\rangle
% - D_h(z_{k+1}, z_k)\\&\quad+ A_{k}(f(x_{k+1}) - f(x_k)) +  (A_{k+1} - A_k)(f(x_{k+1}) - f(x))\\
%&\leq 0
%\end{align*}
% The inequality on the last line follows from the convexity of $f$ and the strict convexity of $h$. This allows us to conclude $E_{k+1} - E_k \leq 0$.
\subsection[Proof of Proposition]{Proof of Proposition~\ref{Prop:WeakBound}}
\label{App:WeakBound}
We show the initial bounds~\eqref{eq:err} and \eqref{eq:err1}. We begin with algorithm~\eqref{eq:agd}:
\begin{align*}
E_{k+1} - E_k &= D_h(x, z_{k+1})  -  D_h(x, z_{k}) + A_{k+1}(f(y_{k+1}) - f(x)) - A_{k}(f(y_{k}) - f(x))\\
& = - \langle  \nabla h(z_{k+1}) - \nabla h(z_k), x - z_{k+1}\rangle - D_h(z_{k+1}, z_k) + A_{k+1}(f(y_{k+1}) - f(x))- A_{k}(f(y_{k}) - f(x))\\
& \overset{\eqref{eq:mir}}{=} \alpha_k \langle  \nabla f(x_{k+1}), x - z_{k+1}\rangle - D_h(z_{k+1}, z_k) + \alpha_k(f(x_{k+1}) - f(x)) + A_k (f(x_{k+1}) - f(y_k))
\\\quad &+ A_{k+1}(f(y_{k+1}) - f(x_{k+1}))\\
& \leq  \alpha_k \langle  \nabla f(x_{k+1}), x - z_{k}\rangle + \alpha_k\langle  \nabla f(x_{k+1}), z_k- z_{k+1}\rangle - \frac{\sigma}{p}\|z_{k+1} - z_k\|^p + \alpha_k(f(x_{k+1}) - f(x)) 
\\\quad &+ A_k (f(x_{k+1}) - f(y_k)) + A_{k+1}(f(y_{k+1}) - f(x_{k+1}))\\
& \leq  \alpha_k \langle  \nabla f(x_{k+1}), x - z_{k}\rangle  + A_k (f(x_{k+1}) - f(y_k)) + \alpha_k(f(x_{k+1}) - f(x))\\
&\quad + \frac{p-1}{p}\sigma^{-\frac{1}{p-1}}(A_{k+1} - A_k)^{\frac{p}{p-1}}\|\nabla f(x_{k+1})\|^{\frac{p}{p-1}}   + A_{k+1}(f(y_{k+1}) -f(x_{k+1})). %+ \varepsilon_{k+1}%A_{k+1}(f(y_{k+1}) - f(x_{k+1})) \\
%&\quad- \frac{p-1}{p} \sigma^{-\frac{1}{p-1}}(A_{k+1} - A_k)^{\frac{p}{p-1}}\|\nabla f(x_{k+1})\|^{\frac{p}{p-1}} 
\end{align*}
The first inequality follows from the $\sigma$-uniform convexity of $h$ with respect to the $p$-th power of the norm and the last inequality follows from the Fenchel Young inequality. If we continue with our argument, and plug in the identity~\eqref{eq:err}, it simply remains to use our second update~\eqref{eq:coup}:
\begin{align*}
E_{k+1} - E_k& \leq  \alpha_k \langle  \nabla f(x_{k+1}), x - z_{k}\rangle  + A_k (f(x_{k+1}) - f(y_k)) + \alpha_k(f(x_{k+1}) - f(x))\\
&\quad + \frac{p-1}{p}\sigma^{-\frac{1}{p-1}}(A_{k+1} - A_k)^{\frac{p}{p-1}}\|\nabla f(x_{k+1})\|^{\frac{p}{p-1}}   + A_{k+1}(f(y_{k+1}) -f(x_{k+1}))\\
&\leq  \alpha_k \langle  \nabla f(x_{k+1}), x - y_{k}\rangle + A_{k+1}\langle  \nabla f(x_{k+1}), y_k - x_{k+1}\rangle +A_k (f(x_{k+1}) - f(y_k)) \\\quad & + \alpha_k(f(x_{k+1}) - f(x))+\varepsilon_{k+1} \\
&=  \alpha_k(  f(x_{k+1}) - f(x) + \langle 
\nabla f(x_{k+1}), x - x_{k+1}\rangle) + A_k (f(x_{k+1}) - f(y_k)  + \langle \nabla f(x_{k+1}), y_{k} - x_{k+1}\rangle) \\
&\quad +   \varepsilon_{k+1}.
\end{align*}
From here, we can conclude $E_{k+1} - E_k \leq \varepsilon_k$ using the convexity of $f$. 

We now show the bound~\eqref{eq:err1} for algorithm~\eqref{eq:agd2} using a similar argument.
\begin{align*}
E_{k+1} - E_k &= D_h(x, z_{k+1})  -  D_h(x, z_{k}) + A_{k+1}(f(y_{k+1}) - f(x)) - A_{k}(f(y_{k}) - f(x))\\
%& = - \langle  \nabla h(z_{k+1}) - \nabla h(z_k), x - z_{k+1}\rangle - D_h(z_{k+1}, z_k)
%\\\quad &+ A_{k+1}(f(y_{k+1}) - f(x))- A_{k}(f(y_{k}) - f(x))\\
& \overset{\eqref{eq:mir}}{=}  \alpha_k\langle  \nabla f(y_{k+1}), x - z_{k+1}\rangle - D_h(z_{k+1}, z_k) + \alpha_k (f(y_{k+1}) - f(x)) + A_k (f(y_{k+1}) - f(y_k))\\
%&\quad+ A_{k+1}(f(y_{k+1}) - f(x_{k+1}))\\
& \leq  \alpha_k\langle  \nabla f(y_{k+1}), x - z_{k}\rangle + \alpha_k\langle  \nabla f(y_{k+1}), z_k- z_{k+1}\rangle - \frac{\sigma}{p}\|z_{k+1} - z_k\|^p
\\\quad &+ \alpha_k(f(y_{k+1}) - f(x)) + A_k (f(y_{k+1}) - f(y_{k})) \\
& \leq  \alpha_k\langle  \nabla f(y_{k+1}), x - z_{k}\rangle  + A_k (f(y_{k+1}) - f(y_k))+ \alpha_k(f(y_{k+1}) - f(x))\\\quad & - A_{k+1}\langle \nabla f(y_{k+1}), y_{k+1} - x_{k+1}\rangle  + \varepsilon_{k+1}.
\end{align*}
The first inequality follows from the uniform convexity of $h$ and the second uses the Fenchel Young inequality and definition~\eqref{eq:err1}. 
Using the second update~\eqref{eq:coup1}, we obtain our initial error bound:
\begin{align*}
E_{k+1} - E_k&\leq  \alpha_k\langle  \nabla f(y_{k+1}), x - y_{k}\rangle + A_k (f(y_{k+1}) - f(y_k))+ \alpha_k(f(y_{k+1}) - f(x)) \\ 
&\quad + A_{k+1}\langle  \nabla f(y_{k+1}), y_k - x_{k+1}\rangle- A_{k+1}\langle \nabla f(y_{k+1}), y_{k+1} - x_{k+1}\rangle+\varepsilon_{k+1} \\
&=  \alpha_k(  f(y_{k+1}) - f(x) + \langle 
\nabla f(y_{k+1}), x - y_{k+1}\rangle)   \\&\quad + A_k (f(y_{k+1}) - f(y_k) + \langle \nabla f(y_{k+1}), y_{k} - y_{k+1}\rangle)+
\varepsilon_{k+1}.
\end{align*}
The last line can be upper bounded by the error $\varepsilon_{k+1}$ using convexity of $f$. 

\subsection[Proof of Proposition]{Proof of Proposition~\ref{prop:Hold}}
\label{App:Hold}
A similar progress bound was proved in Wibisono, Wilson and Jordan~\cite[Lem 3.2]{Acceleration}. Note that $y = \mathcal{G}(x)$ satisfies the optimality condition 
\begin{align}\label{Eq:UpdateProof1}
\sum_{i=1}^{p-1} \frac{1}{(i-1)!} \nabla^i f(x) \, (y-x)^{i-1} + \frac{N}{\epsilon} \|y-x\|^{\tilde p-2} \, (y-x) = 0.
\end{align}
Furthermore, since $\nabla^{p-1} f$ is H\"older-continuous~\eqref{eq:Hold}, we have the following error bound on the $(p-2)$-nd order Taylor expansion of $\nabla f$,
\begin{multline}\label{Eq:UpdateProof2}
\left\|\nabla f(y) - \sum_{i=0}^{p-1} \frac{1}{(i-1)!} \nabla^i f(x)(y-x)^{i-1}\right\| = \left\|\int_0^1 [\nabla^{p-1} f(ty + (1- t)x) - \nabla^{p-1} f(x)](y - x)^{p-2}dt\right\| \\\le \frac{1}{\epsilon} \|y-x\|^{p-2+ \nu} \int_0^1 t^\nu  = \frac{1}{\epsilon} \|y-x\|^{\tilde p -1}.
%\left\|\nabla f(y) - \sum_{i=1}^{p-1} \frac{1}{(i-1)!} \nabla^i f(x) \, (y-x)^{i-1}\right\|_* \le \frac{1}{\epsilon} \|y-x\|^{\tilde p-1}.
\end{multline}
Substituting~\eqref{Eq:UpdateProof1} to~\eqref{Eq:UpdateProof2} and writing $r = \|y-x\|$, we obtain
\begin{align}\label{Eq:UpdateProof2a}
\left\|\nabla f(y) + \frac{Nr^{\tilde p-2}}{\epsilon} \, (y-x)\right\|_* \,\le\, \frac{r^{\tilde p-1}}{\epsilon}.
\end{align}
Now the argument proceeds as in~\cite{Acceleration}.
Squaring both sides, expanding, and rearranging the terms, we get the inequality
\begin{align}\label{Eq:UpdateProof3}
\langle \nabla f(y), x-y \rangle
\,\ge\, \frac{\epsilon}{2Nr^{\tilde p-2}} \|\nabla f(y)\|_*^2 + \frac{(N^2-1)r^{\tilde p}}{2N\epsilon}.
\end{align}
Note that if $\tilde p=2$, then the first term in~\eqref{Eq:UpdateProof3} already implies the desired bound~\eqref{eq:progcond}. Now assume $\tilde p \ge 3$. The right-hand side of~\eqref{Eq:UpdateProof3} is of the form $A/r^{\tilde p-2} + Br^{\tilde p}$, which is a convex function of $r > 0$ and minimized by $r^* = \left\{\frac{(\tilde p-2)}{\tilde p} \frac{A}{B} \right\}^{\frac{1}{2\tilde p-2}}$, yielding a minimum value of
\begin{align*}
\frac{A}{(r^*)^{ \tilde p-2}} + B(r^*)^p
\,=\, A^{\frac{\tilde p}{2\tilde p-2}} B^{\frac{\tilde p-2}{2\tilde p-2}} \left[\left(\frac{\tilde p}{\tilde p-2}\right)^{\frac{\tilde p-2}{2\tilde p-2}} + \left(\frac{\tilde p-2}{\tilde p}\right)^{\frac{\tilde p}{\tilde p-2}}\right]
\,\ge\, A^{\frac{\tilde p}{2\tilde p-2}} B^{\frac{\tilde p-2}{2\tilde p-2}}.
\end{align*}
Substituting the values $A = \frac{\epsilon}{2N} \|\nabla f(y)\|_*^2$ and $B = \frac{1}{2N\epsilon} (N^2-1)$ from~\eqref{Eq:UpdateProof3}, we obtain
\begin{multline*}
\langle \nabla f(y), x-y \rangle
\,\ge\, \left(\frac{\epsilon}{2N} \|\nabla f(y)\|_*^2\right)^{\frac{\tilde p}{2 \tilde p-2}} \left(\frac{1}{2N\epsilon} (N^2-1)\right)^{\frac{ \tilde p-2}{2\tilde p-2}}
= \frac{(N^2-1)^{\frac{\tilde p-2}{2 \tilde p-2}}}{2N} \epsilon^{\frac{1}{\tilde p-1}} \|\nabla f(y)\|_*^{\frac{\tilde p}{\tilde p-1}},
\end{multline*}
which proves the progress bound~\eqref{eq:progcond}.
\subsection{Proof of Universal Gradient Method}
\label{App:UnivGrad}
We present a convergence rate for higher-order gradient method $y_{k+1} = \mathcal{G}_{\epsilon, p, \nu,N}(x_{k+1})$ where $\mathcal{G}$ is given by~\eqref{eq:taylor3} and $f$ has $(\epsilon, \nu)$-H\"older-continuous gradients~\eqref{eq:Hold}. The proof is inspired by the proof of the rescaled gradient flow
$
\dot X_t = -\nabla f(X_t)/\|\nabla f(X_t)\|_\ast^{\frac{p-2}{p-1}},
$
outlined in~\cite[Appendix G]{Acceleration}, which is the continuous-time limit of the algorithm. Using the Lyapunov function
\begin{align*}
\mathcal{E}_t = t^p(f(X_t) - f(x)),
\end{align*}
the following argument can be made using the convexity of $f$ and the dynamics:
\begin{align*}
\dot{\mathcal{E}}_t &=  t^p\langle \nabla f(X_t), \dot X_t\rangle  + p t^{p-1}(f(X_t) - f(x^\ast))\\
&\leq t^p\langle \nabla f(X_t), \dot X_t\rangle +  p t^{p-1}\langle \nabla f(X_t), X_t - x^\ast\rangle\\
&=  - t^p\|\nabla f(X_t)\|_\ast^{\frac{p}{p-1}} + p t^{p-1}\langle \nabla f(X_t), X_t - x^\ast \rangle\\
&\leq \frac{1}{p-1}\|(p-1)(X_t - x^\ast)\|^p\\
& \leq (p-1)^{p-1} R^p.
\end{align*}
The last two inequalities use the Fenchel-Young inequality and the fact that 
$\|X_t - x^\ast\|\leq R$ since rescaled gradient flow is a descent method. 
We can conclude $O(t^{p-1})$ convergence rate by integrating. We now proceed with the discrete-time argument by using the Lyapunov function~\eqref{eq:lyapfrank} ($\tilde p \geq 2$):
\begin{align*}
E_k = A_k(f(x_k) - f(x))
\end{align*}
We argue as follows:
\begin{align*}
E_{k+1} - E_k &= A_{k} (f(x_{k+1}) - f(x_k)) + \alpha_k (f(x_{k+1}) - f(x^\ast))\\
&\leq A_{k} \langle \nabla f(x_{k+1}), x_{k+1} - x_k \rangle + \alpha_k \langle \nabla f(x_{k+1}), x_{k+1} - x^\ast\rangle\\
&\leq -A_{k} \frac{(N^2-1)^{\frac{\tilde p-2}{2 \tilde p-2}}}{2N} \epsilon^{\frac{1}{\tilde p-1}} \|\nabla f(x_{k+1})\|_*^{\frac{\tilde p}{\tilde p-1}} + \alpha_k \langle \nabla f(x_{k+1}),x_{k+1} - x^\ast\rangle\\
&\leq  \frac{1}{\epsilon}\alpha_k^{\tilde p} A_k^{1-\tilde p} \frac{(\tilde p-1)^{\tilde p-1}}{\tilde p^{\tilde p}}  \left(\frac{(N^2-1)^{\frac{\tilde p-2}{2 \tilde p-2}}}{2N}\right)^{-\frac{\tilde p-1}{\tilde p}} \|x_{k+1} - x^\ast\|^{\tilde p},
\end{align*}
where the first inequality follows from convexity, the second inequality 
uses~\ref{prop:Hold}, and the third line uses Young's inequality, $\langle s,u\rangle + \frac{1}{p}\|u\|^p \leq -\frac{p-1}{p}\|s\|_\ast^{\frac{p-1}{p}}$, $2\geq p\in \mathbb{R}$  with the identifications 
\begin{align*} s &= \epsilon^{1/\tilde{p}} \nabla f(x_{k+1}) \left(A_k\frac{(N^2-1)^{\frac{\tilde p-2}{2 \tilde p-2}}}{2N}\right)^{\frac{\tilde p-1}{\tilde p}} \\
u &= (x_{k+1} - x^\ast) \epsilon^{-\frac{1}{\tilde p}} \left(A_k\frac{(N^2-1)^{\frac{\tilde p-2}{2 \tilde p-2}}}{2N}\right)^{-\frac{\tilde p-1}{\tilde p}} \alpha_k \frac{\tilde p-1}{\tilde p}.
\end{align*}
From Lemma~\ref{prop:Hold}, it follows that this method is a descent method. 
Furthermore, we can choose $\alpha_k^{\tilde {p}} A_k^{1-\tilde p}\leq C$, for some constant $C$,  by choosing $A_k$ to be a polynomial of degree $\tilde p$. By summing we 
obtain the desired $O(1/k^{\tilde p - 1})$ convergence rate.

\subsection{Proof of Proposition~\ref{prop:strongbound}}
\label{App:strongbound_proof}
We show the initial error bound~\eqref{eq:errrrr}. To do so, we define the Lyapunov function,
\begin{align}\label{eq:lyapplain}
\tilde E_k = f(y_k) - f(x^\ast) + \mu D_h(x, z_k).
\end{align}
Note that we simply need to show $\tilde E_{k+1} - \tilde E_k \leq -\tau_k \tilde E_k + \varepsilon_{k+1}/A_{k+1}$ where $\tau_k = \frac{A_{k+1} - A_k}{A_{k+1}}$. Thus, we begin with the following bound:
\begin{align*}
\tilde E_{k+1} -\tilde  E_k &= f(y_{k+1}) - f(y_k) - \mu \langle \nabla h(z_{k+1}) - \nabla h(z_k), x^\ast - z_{k+1}\rangle - \mu D_h(z_{k+1} ,z_k)\\
&\leq f(y_{k+1}) - f(x_k) + f(x_k) - f(y_k) - \mu \langle \nabla h(z_{k+1}) - \nabla h(z_k), x^\ast - z_{k}\rangle+ \frac{\sigma \mu}{2}\|\nabla h(z_{k+1}) - \nabla h(z_k)\|^2\\
&\leq f(y_{k+1}) - f(x_k) + \langle \nabla f(x_k), x_k - y_k\rangle - \mu D_h(x_k,y_k) - \mu \langle \nabla h(z_{k+1}) - \nabla h(z_k), x - z_{k}\rangle\\
& + \frac{\sigma \mu}{2}\|\nabla h(z_{k+1}) - \nabla h(z_k)\|^2 \\
&\overset{\eqref{Eq:ZSeq1}}{=} f(y_{k+1}) - f(x_k) + \langle \nabla f(x_k), x_k - y_k\rangle -  \mu D_h(x_k,y_k)   + \tau_k \langle \nabla f(x_{k}), x - z_k\rangle \\
&\quad   - \mu\tau_k\langle \nabla h(x_{k}) - \nabla h(z_k), x^\ast - z_{k}\rangle+ \frac{\sigma \mu}{2}\|\nabla h(z_{k+1}) - \nabla h(z_k)\|^2\\
%&\quad \\
&\overset{\eqref{Eq:Coupling1}}{=} f(y_{k+1})- f(x_k) + \langle \nabla f(x_k), x_k - y_k\rangle - \mu D_h(x_k,y_k)  + \tau_k \langle \nabla f(x_{k}), x - x_{k} \rangle\\
&\quad  - \tau_k\mu\langle \nabla h(x_{k}) - \nabla h(z_k), x - z_{k}\rangle + \langle\nabla f(x_{k}), y_{k} - x_{k}\rangle + \frac{\sigma \mu}{2}\|\nabla h(z_{k+1}) - \nabla h(z_k)\|^2\\
%& \quad
%\end{align*}
%Proceeding from the last line,
%\begin{align*}
%
%\tilde E_{k+1} - \tilde E_k&
&\leq -\tau_k\left( f(x_{k} ) - f(x^\ast) + \mu D_h(x, x_{k}) \right) + f(y_{k+1}) - f(x_k)  - \frac{\sigma \mu}{2}\|x_k - y_k\|^2   \\
&\quad - \tau_k\mu\langle \nabla h(x_{k}) -\nabla h(z_k), x^\ast - z_{k}\rangle  + \frac{\sigma \mu}{2}\|\nabla h(z_{k+1}) - \nabla h(z_k)\|^2\\
&= -\tau_k\left( f(y_{k} ) - f(x^\ast) + \mu D_h(x,z_k) \right) + f(y_{k+1}) - f(x_k)  - \frac{\mu\sigma}{2}\|x_k - y_k\|^2  \\
&\quad   +  \frac{\sigma \mu}{2}\|\nabla h(z_{k+1}) - \nabla h(z_k)\|^2 -\tau_k\mu D_h(x_k,z_k) + \tau_k(f(y_k)) - f(x_k))\\
&\leq -\tau_k \tilde E_k  + f(y_{k+1}) - f(x_k)  - \frac{\mu\sigma}{2}\|x_k - y_k\|^2+ \tau_k\langle \nabla f(x_k), y_k - x_k\rangle  \\
&\quad   +  \frac{\sigma \mu}{2}\|\tau_k(\nabla h(x_k) - \nabla h(z_k) - \frac{1}{\mu}\nabla f(x_k))\|^2 -\left( \frac{\sigma\mu}{2\tau_k} - \frac{\tau_k}{2\epsilon}\right)\|x_k - y_k\|^2\\
&\leq -\tau_k \tilde E_k +  \varepsilon_{k+1} /A_{k+1}.
\end{align*}
The first inequality uses the $\sigma$-strong convexity of $h$ and the 
Fenchel-Young inequality. The second inequality uses the $\mu$-strong 
convexity of $f$ with respect to $h$.  The third inequality uses the 
strong convexity of $f$ and $\sigma$-strong convexity of $h$. The following line uses the Bregman three point identity~\eqref{eq:bregthree} and the subsequent 
inequality uses the strong convexity of $f$. The last line follows 
from the smoothness of $f$. Now we turn to the case where $h$ is 
Euclidean (so $\sigma = 1$):
\begin{align*}
\tilde E_{k+1} - \tilde E_k &\leq -\tau_k \tilde E_k  + f(y_{k+1}) - f(x_k)  - \frac{\mu}{2}\|x_k - y_k\|^2+ \tau_k\langle \nabla f(x_k), y_k - x_k\rangle  \\
&\quad   +  \frac{ \mu}{2}\|\tau_k(x_k - z_k) - \frac{\tau_k}{\mu}\nabla f(x_k))\|^2 -\left( \frac{\mu}{2\tau_k} - \frac{\tau_k}{2\epsilon}\right)\|x_k - y_k\|^2\\
&= -\tau_k \tilde E_k  + f(y_{k+1}) - f(x_k)  - \frac{\mu}{2}\|x_k - y_k\|^2+ \tau_k\langle \nabla f(x_k), y_k - x_k\rangle   +  \frac{ \mu}{2}\|\tau_k(x_k - z_k)\| \\
&\quad  - \tau_k \langle \nabla f(x_{k}), \tau_k(x_{k} - z_k)\rangle + \frac{\tau_k^2}{2\mu}\|\nabla f(x_{k})\|^2  -\left( \frac{\mu}{2\tau_k} - \frac{\tau_k}{2\epsilon}\right)\|x_k - y_k\|^2\\
%&\quad \\
&=-\tau_k \tilde E_k  + f(y_{k+1}) - f(x_k)   + \frac{\tau_k^2}{2\mu}\|\nabla f(x_{k})\|^2  -\left( \frac{\mu}{2\tau_k} - \frac{\tau_k}{2\epsilon}\right)\|x_k - y_k\|^2.
\end{align*}
In the second line we have expanded the square. The last line uses the update~\eqref{Eq:Coupling1}.

\subsection{Proof of Proposition~\ref{prop:quasi-strong}}
\label{App:quasi-strong}
We show the convergence bound for the quasi-monotone method~\eqref{eq:str1}. We have,
\begin{align*}
\tilde E_{k+1} - \tilde E_k &= - \langle \nabla h(z_{k+1}) - \nabla h(z_k), x - z_{k+1}\rangle - D_h(z_{k+1}, z_k) + f(x_{k+1}) - f(x_k)\\
&\overset{\eqref{eq:strz}}{=} \tau_k \langle \nabla f(x_{k+1}), x - z_{k} \rangle +\tau_k \langle \nabla h(z_{k+1}) - \nabla h(x_{k+1}), x- z_{k+1}\rangle \\&\quad +\tau_k \langle \nabla f(x_{k+1}), z_k- z_{k+1} \rangle - D_h(z_{k+1}, z_k) + f(x_{k+1}) - f(x_k)\\
&\leq \tau_k \langle \nabla f(x_{k+1}), x - z_{k} \rangle+ \tau_k \langle \nabla h(z_{k+1}) - \nabla h(x_{k+1}), x- z_{k+1}\rangle \\
&\quad +\frac{\tau_k^2\sigma }{2} \|\nabla f(x_{k+1})\|^2 + f(x_{k+1}) - f(x_k)\\
&= \tau_k \langle \nabla f(x_{k+1}), x - x_{k+1} \rangle+ \tau_k \langle \nabla h(z_{k+1}) - \nabla h(x_{k+1}), x- z_{k+1}\rangle \\
&\quad +\frac{\tau_k^2\sigma }{2} \|\nabla f(x_{k+1})\|^2 + f(x_{k+1}) - f(x_k)  + \langle \nabla f(x_{k+1}), x_{k} -x_{k+1}\rangle\\
&\leq -\tau_k( f(x_{k+1}) - f(x^\ast) + D_h(x,x_{k+1}))   + \tau_k \langle \nabla h(z_{k+1}) - \nabla h(x_{k+1}), x- z_{k+1}\rangle\\
&\quad +\frac{\tau_k^2\sigma }{2} \|\nabla f(x_{k+1})\|^2 \\
&\leq -\tau_k( f(x_{k+1}) - f(x^\ast) + D_h(x,z_{k+1})) +\frac{\tau_k^2\sigma }{2} \|\nabla f(x_{k+1})\|^2 
%& = -\tau_k\tilde E_{k+1} +\frac{\tau_k^2\sigma }{2} \|\nabla f(x_{k+1})\|^2.
\end{align*}
The first inequality from the strong convexity of $h$ as well as H\"older's 
inequality. The second inequality from the uniform convexity of $f$ with respect to $h$ and convexity of $f$. The last line follows from the Bregman three-point identity~\eqref{eq:bregthree} and non-negativity of the Bregman divergence. Taking $\tau_k = \frac{A_{k+1} - A_k}{A_k}$ gives the desired error bound. 

\subsection{Proof of Proposition~\ref{prop:franklyap}}
\label{App:franklyap}
We show that~\eqref{eq:lyapfrank} is a Lyapunov function for dynamics \eqref{Eq:Frankd}. The argument is simple:
\begin{align*}
0 \leq \dot\beta_te^{\beta_t}\langle \nabla f(X_t), x -Z_t \rangle &= \dot\beta_te^{\beta_t}\langle \nabla f(X_t), x -X_t \rangle - e^{\beta_t}\langle \nabla f(X_t), \dot X_t \rangle \\
&=  \dot\beta_te^{\beta_t}\langle \nabla f(X_t), x -X_t \rangle - \frac{d}{dt}\left\{e^{\beta_t}f(X_t)\right\} + \dot \beta_t e^{\beta_t} f(X_t) \\
&\leq  -\frac{d}{dt}\left\{e^{\beta_t}(f(X_t) - f(x))\right\}  .
\end{align*}
\subsection{Proof of Proposition~\ref{eq:propFrank}}
\label{App:prop_Frank} 
If we take $\nu = 1$ bound~\eqref{eq:bound2} implies~\eqref{eq:bound1}; therefore we simply show the bound~\eqref{eq:bound2}. To that end,
\begin{align*}
E_{k+1} - E_k %&= A_{k+1}(f(x_{k+1}) - f(x)) -A_k (f(x_k) - f(x))\\
&= A_{k+1}(f(x_{k+1}) - f(x_k)) + \alpha_k (f(x_k) - f(x))\\
&\leq A_{k+1}\langle \nabla f(x_k), x_{k+1} - x_k\rangle + \frac{A_{k+1}}{(1+\nu)\epsilon}\|x_{k+1} - x_k\|^{1+\nu} + \alpha_k \langle \nabla f(x_k), x - x_k\rangle \\
&\overset{\eqref{Eq:ZSeqFrank1}}{=} \alpha_k\langle \nabla f(x_k), z_k - x_k\rangle + \frac{A_{k+1}\alpha_k^{1+\nu}}{(1+\nu)\epsilon}\|z_k - x_k\|^{1+\nu}+ \alpha_k \langle \nabla f(x_k), x - x_k\rangle \\
%&=  \frac{A_{k+1}\alpha_k^{1+\nu}}{(1+\nu)\epsilon}\|z_k - x_k\|^{1+\nu} + \alpha_k \langle \nabla f(x_k), x - z_k\rangle \\
&\overset{\eqref{Eq:XSeqFrank1}}{\leq}\frac{A_{k+1}\alpha_k^{1+\nu}}{(1+\nu)\epsilon}\|z_k - x_k\|^{1+\nu}.
\end{align*}
The first inequality follows from the H\"older continuity and convexity of $f$. The rest simply follows from plugging in our identities.
\section{Estimate Sequences}
\label{App:EstSeq}
\subsection{The quasi-monotone subgradient method}
The discrete-time estimate sequence~\eqref{Eq:Est} for quasi-monotone subgradient method can be written:
\begin{align*}
\phi_{k+1}(x) - A_{k+1}^{-1}\tilde \varepsilon_{k+1}&:= f(x_{k+1}) + A_{k+1}^{-1} D_h(x, z_{k+1}) - A_{k+1}^{-1}\tilde \varepsilon_{k+1}\\
& \overset{\eqref{Eq:Est}}{=} (1 - \tau_k) \left(\phi_k(x) - A_k^{-1}\tilde \varepsilon_k \right) + \tau_k f_k(x) \\
&= \left(1 - \frac{\alpha_k}{A_{k+1}} \right)\left(f(x_k) +\frac{1}{A_k} D_h(x, z_k) -\frac{\tilde \varepsilon_k}{A_k}\right) + \frac{\alpha_k}{A_{k+1}} f_k(x).
\end{align*}
Multiplying through by $A_{k+1}$, we have
\begin{align*}
A_{k+1} f(x_{k+1}) + D_h(x, z_{k+1}) - \tilde \varepsilon_{k+1} &= (A_{k+1} - \alpha_k) (f(x_k) + A_k^{-1}D_h(x, z_k) -A_k^{-1}\tilde{\varepsilon}_k) \\
&\quad  - (A_{k+1} - \alpha_k) A_k^{-1}\tilde{\varepsilon}_k + \alpha_k f_k(x)\\
&=  A_k \left(f(x_k) + A_k^{-1}D_h(x, z_k) - A_k^{-1}\tilde \varepsilon_k\right) + \alpha_k f_k(x)\\
& \overset{\eqref{Eq:Under}}{\leq}  A_k f(x_k) + D_h(x, z_k) - \tilde \varepsilon_k + \alpha_k f(x).
\end{align*}
Rearranging, we obtain our Lyapunov argument $E_{k+1} \leq E_k + \varepsilon_{k+1}$ for~\eqref{eq:lyap}:
\begin{align*}
A_{k+1}(f(x_{k+1}) - f(x)) + D_h(x, z_{k+1}) &\leq A_k (f(x_k) - f(x)) + D_h(x, z_k) + \varepsilon_{k+1}.
\end{align*}
%where $\varepsilon_{k+1} = \tilde \varepsilon_{k+1} - \tilde \varepsilon_k = \frac{A_{k+1} -A_k}{2}\|\nabla f(x_{k+1})\|^2$.
%
Going the other direction, from our Lyapunov analysis we can derive the following bound:
%\begin{subequations}%\label{Eq:EstSeqErr}
\begin{align}
E_k &\leq E_0 + \tilde \varepsilon_{k}\\
A_{k}(f(x_{k}) - f(x)) + D_h(x, z_{k}) &\leq A_0 (f(x_0) - f(x)) + D_h(x, z_0) + \tilde \varepsilon_{k}\notag\\
A_{k}\left(f(x_{k}) - \frac{1}{A_{k}}D_h(x, z_{k})\right) &\leq (A_k -  A_0) f(x) + A_0\left(f(x_0)+ \frac{1}{A_0}  D_h(x^\ast, z_0)\right) + \tilde \varepsilon_{k}\notag \\
A_k \phi_k(x) &\leq (A_k -A_0) f(x) + A_0 \phi_0(x) + \tilde \varepsilon_{k}.
%\label{Eq:EstSeq2}
\end{align}
%\end{subequations}
Rearranging, we obtain our estimate sequence~\eqref{Eq:Ineq1} ($A_0 = 1$) with an additional error term:
\begin{subequations}
\begin{align}
\phi_k(x) &\leq \Big(1 -\frac{A_0}{A_k}\Big) f(x) + \frac{A_0}{A_k} \phi_0(x) + \frac{\tilde \varepsilon_{k}}{A_k}= \Big(1 - \frac{1}{A_k}\Big)f(x) + \frac{1}{A_k}\phi_0(x)+ \frac{\tilde \varepsilon_{k}}{A_k}.
\end{align}
\end{subequations}
%Note, given accelerated gradient method and quasi-monotone subgradient method have the same continuous-time limit, the equivalence between the Lyapunov argument and method estimate sequences holds for this setting.
%
%%
%%
\subsection{Frank-Wolfe}
The discrete-time estimate sequence~\eqref{Eq:Est} for conditional gradient method can be written:
\begin{align*}
\phi_{k+1}(x) - \frac{\tilde \varepsilon_{k+1}}{A_{k+1}} := f(x_{k+1})  - \frac{\tilde \varepsilon_{k+1}}{A_{k+1}}&\overset{\eqref{Eq:Est}}{=} (1 - \tau_k) \left(\phi_k(x) - \frac{\tilde \varepsilon_k}{A_k}\right) + \tau_k f_k(x) \\
&\overset{\text{Table}~\ref{table:Table}}{=} \left(1 - \frac{\alpha_k}{A_{k+1}} \right)\left(f(x_k) -\frac{\tilde \varepsilon_k}{A_k}\right)+ \frac{\alpha_k}{A_{k+1}} f_k(x).
\end{align*}
Multiplying through by $A_{k+1}$, we have
\begin{align*}
A_{k+1} \left(f(x_{k+1}) - \frac{\tilde \varepsilon_{k+1}}{A_{k+1}}\right) &= (A_{k+1} - (A_{k+1} - A_k)) \left(f(x_k) - \frac{\tilde \varepsilon_k}{A_k}\right) + \alpha_k f_k(x)\\
&=  A_k \left(f(x_k) - A_k^{-1}\tilde \varepsilon_k\right) + (A_{k+1} - A_k) f_k(x)\\
& \overset{\eqref{Eq:Under}}{\leq}  A_k f(x_k)  - \tilde \varepsilon_k + (A_{k+1} - A_k) f(x).\
\end{align*}
Rearranging, we obtain our Lyapunov argument $E_{k+1} - E_k \leq \varepsilon_{k+1}$ for~\eqref{eq:lyapfrank} :
\begin{align*}
A_{k+1}(f(x_{k+1}) - f(x))  &\leq A_k (f(x_k) - f(x)) + \varepsilon_{k+1}.
\end{align*}
%also written $E_{k+1} \leq E_k + \varepsilon_{k+1}$ for \eqref{Eq:DiscLyapFunc1}, 
%where $\varepsilon_{k+1} = \tilde \varepsilon_{k+1} - \tilde \varepsilon_k = \frac{L}{2}\frac {(A_{k+1} - A_k)^2}{A_{k+1}} diam(\X)^2$.
Going the other direction, from our Lyapunov analysis we can derive the following bound:
%\begin{align*}
%0 &\geq E_{k+1} - E_k - \varepsilon_k  \\
%& = A_{k+1}(f(x_{k+1}) - f(x)) + D_h(x, z_{k+1})  - A_k (f(x_k) - f(x)) - D_h(x, z_k)  -  \varepsilon_{k}\\
%& =  A_{k+1} \left(f(x_{k+1}) + \frac{1}{A_{k+1}} D_h(x, z_{k+1})\right) - A_k \left(f(x_k) +  \frac{1}{A_{k}} D_h(x, z_{k})\right) - (A_{k+1} - A_k)f(x) - \tilde \varepsilon_{k+1} - \tilde \varepsilon_k
%\end{align*}
\begin{subequations}
\begin{align*}
E_k &\leq E_0 + \tilde \varepsilon_{k}\\
%A_{k}(f(x_{k}) - f(x)) &\leq A_0 (f(x_0) - f(x)) + \tilde \varepsilon_{k}\notag\\
A_{k}f(x_{k}) &\leq (A_k -  A_0) f(x) + A_0f(x_0) + \tilde \varepsilon_{k}\notag \\
A_k \phi_k(x) &\leq (A_k -A_0) f(x) + A_0 \phi_0(x) + \tilde \varepsilon_{k}%\label{Eq:EstSeq}.
\end{align*}
\end{subequations}
Rearranging, we obtain our estimate sequence~\eqref{Eq:Ineq1} ($A_0 = 1$) with an additional error term:
\begin{subequations}
\begin{align*}
\phi_k(x) &\leq \Big(1 -\frac{A_0}{A_k}\Big) f(x) + \frac{A_0}{A_k} \phi_0(x) + \frac{\tilde \varepsilon_{k}}{A_k}=\Big(1 - \frac{1}{A_k}\Big)f(x) + \frac{1}{A_k}\phi_0(x)+ \frac{\tilde \varepsilon_{k}}{A_k}.
\end{align*}
\end{subequations}
Given that the Lyapunov function property allows us to write 
\begin{align*}
e^{\beta_t} f(X_t) \leq  (e^{\beta_t} - e^{\beta_0}) f(x) + e^{\beta_0}f(X_0),
\end{align*}
we can extract $\{f(X_t),e^{\beta_t}\}$ as the continuous-time estimate sequence for Frank-Wolfe. %
\subsection{Accelerated gradient descent (strong convexity)}
The discrete-time estimate sequence~\eqref{Eq:Est} for accelerated gradient descent can be written:
\begin{align*}
\phi_{k+1}(x) &:= f(x_{k+1}) + \frac{\mu}{2}\|x -  z_{k+1}\|^2 \overset{\eqref{Eq:Est}}{=} (1 - \tau_k) \phi_k(x)  + \tau_k f_k(x)\overset{\eqref{Eq:Under}}{\leq}(1 - \tau_k) \phi_k(x)  + \tau_k f(x).
%&= \left(1 - \frac{\alpha_k}{A_{k+1}} \right)\left(f(x_k) +\frac{1}{A_k} D_h(x, z_k)\right) + \frac{\alpha_k}{A_{k+1}} f_k(x)
\end{align*}
Therefore, we obtain the inequality $\tilde E_{k+1} - \tilde E_k \leq -\tau_k \tilde E_k$ for our Lyapunov function~\eqref{eq:lyapplain} by simply writing $\phi_{k+1}(x) - f(x) + f(x) - \phi_k(x) \leq - \tau_k(\phi_k(x)  -  f(x))$:
%Multiplying through by $A_{k+1}$, we have
\begin{multline*}
%\\
%f(x_{k+1})  + \frac{\mu}{2}\|x - z_{k+1}\|^2 - \left(f(x_k)   + \frac{\mu}{2}\|x - z_{k+1}\|^2\right)  - \tau_k \left(f(x_k) - f(x)  + \frac{\mu}{2}\|x - z_{k+1}\|^2\right)\\ 
f(x_{k+1}) -f(x) + \frac{\mu}{2}\|x - z_{k+1}\|^2 - \left(f(x_k) - f(x)  + \frac{\mu}{2}\|x - z_{k+1}\|^2\right) \\\overset{\text{Table}~\ref{table:Table}}{\leq}- \tau_k \left(f(x_k) - f(x)  + \frac{\mu}{2}\|x - z_{k+1}\|^2\right). 
%A_{k+1} \left(f(x_{k+1}) + \frac{1}{A_{k+1}}D_h(x, z_{k+1})\right) &= (A_{k+1} - (A_{k+1} - A_k)) \left(f(x_k) + \frac{1}{A_k}D_h(x, z_k)\right) + (A_{k+1} - A_k) f_k(x)\\
%&=  A_k \left(f(x_k) + \frac{1}{A_k}D_h(x, z_k) \right) + (A_{k+1} - A_k) f_k(x)\\
%& \overset{\eqref{Eq:Under}}{\leq}  A_k f(x_k) + D_h(x, z_k)+ (A_{k+1} - A_k) f(x)\
\end{multline*}
%Rearranging, we obtain the inequality $E_{k+1} \leq E_k$ for our Lyapunov function~\eqref{Eq:LyapStrong}. 
Going the other direction, we have, 
\begin{align*}
E_{k+1} - E_k &\leq -\tau_k E_k\\
\phi_{k+1} &\leq (1 - \tau_k) \phi_k(x)  + \tau_k f(x)\\
A_{k+1} \phi_{k+1} &\leq A_k \phi_k + (A_{k+1} - A_k) f(x).
\end{align*}
Summing over the right-hand side, we obtain the estimate sequence~\eqref{Eq:Ineq1}: 
\begin{align*}
\phi_{k+1} &\leq \Big(1 -\frac{A_0}{A_{k+1}}\Big) f(x) + \frac{A_0}{A_{k+1}} \phi_0(x)=  \Big(1 -\frac{1}{A_{k+1}}\Big) f(x) + \frac{1}{A_{k+1}} \phi_0(x).
\end{align*}
Since the Lyapunov function property allows us to write 
 \begin{align*}
e^{\beta_t} \left(f(X_t) +\frac{\mu}{2} \|x - Z_t\|^2\right) \leq (e^{\beta_t} - e^{\beta_0}) f(x) + e^{\beta_0}  \left(f(X_0) + \frac{\mu}{2} \|x - Z_0\|^2\right),
\end{align*}
we can extract $\{f(X_t) + \frac{\mu}{2} \|x - Z_t\|^2, e^{\beta_t}\}$ as the continuous-time estimate sequence for accelerated gradient descent in the strongly convex setting.
%\section{Additional Observations}
%\label{App:AddObv}
%Using time invariance property, once we know a unique solution exists for any setting $\beta_t$, we can also conclude in exist for all the dynamics of interest. 
\subsection{Existence and uniqueness}
\label{Sec:Exist}
In this section, we show existence and uniqueness of solutions for the differential equations~\eqref{eq:el2}, when $h$ is Euclidean. To do so, we write the dynamics as the following system of equations
\begin{subequations}\label{eq:exist}
\begin{align}
\dot X_t &= \sqrt{\mu}(W_t - 2X_t)\\
\dot W_t &= -\frac{1}{\sqrt{\mu}} \nabla f(X_t),
\end{align}
\end{subequations}
where we have taken $W_t = Z_t + X_t$ and $\beta_t = \sqrt{\mu} t$. Now if we assume $\nabla f$ is Lipschitz continuous, then over any bounded interval $[t_0, t_1]$ with $0\leq t_0< t_1$, the right-hand side of~\eqref{eq:exist} is a Lipschitz-continuous vector field. Therefore, by the Cauchy-Lipschitz theorem, for any initial conditions $(X_{t_0}, W_{t_0}) = (x_0, w_0)$ at time $t = t_0$, the system of differential equations has a unique solution over the time interval $[t_0, t_1]$. Since $t_1$ is arbitrary and the energy is decreasing in $t_1$, this shows that there is a unique maximal solution for any $t_1 \rightarrow \infty$. To show a unique solution exists for an arbitrary $\beta_t$, we show the family of dynamics~\eqref{eq:el2} is closed under time-dilation (similar to dynamics~\eqref{eq:el1}). Thus, if a unique solution exists for any setting $\beta_t$, we can conclude it exists for all $\beta_t$. To demonstrate the time-dilation property, we calculate the velocity and acceleration of the reparameterized curve $Y_t = X_{\tau_t}$, where $\tau: \mathbb{R}_+ \rightarrow \mathbb{R}_+$ is an increasing function of time:
\begin{align*}
\dot Y_t &= \dot \tau_t \dot X_{\tau_t}\\
\ddot Y_t &= \ddot \tau_t \dot X_{\tau_t}+ \dot \tau_t^2 \ddot X_{\tau_t}\\
\dot{\tilde\beta}_t &= \frac{d}{dt} \beta_{\tau_t} = \dot \tau_t\dot{\beta}_{\tau_t} \\
\ddot {\tilde\beta}_t &= \frac{d}{dt} \dot \tau_t \dot \beta_{\tau_t} = \ddot \tau_t \dot   \beta_{\tau_t} + \dot \tau_t^2\ddot  \beta_{\tau_t}.
\end{align*}
Inverting the first of these relations, we get
\begin{align*}
\dot X_{\tau_t} &= \frac{1}{\dot \tau_t} \dot Y_t\\
\ddot X_{\tau_t} &= \frac{1}{\dot \tau_t^2} \ddot Y_t - \frac{\ddot \tau_t}{\dot \tau_t^3} \dot Y_t. \\
\dot{\beta}_{\tau_t}&= \frac{1}{\dot \tau_t}\dot{\tilde\beta}_t  \\
\ddot  \beta_{\tau_t} &= \frac{1}{\dot \tau_t^2}\ddot {\tilde\beta}_t - \frac{\ddot \tau_t }{ \dot \tau_t^2}\dot   \beta_{\tau_t}.
\end{align*}
Computing the time-dilated Euler-Lagrange equation, we get
\begin{align*}
Z_{\tau_t} &= X_{\tau_t} + \frac{1}{\dot \beta_{\tau_t}} \dot X_{\tau_t}= Y_t +  \frac{1}{\dot \beta_{\tau_t}\dot \tau_t} \dot Y_{t} = Y_t +  \frac{1}{\dot{\tilde \beta}_{t}} \dot Y_{t}
\end{align*}
for the first equation, as well as the identity
\begin{align*}
\dot Z_{\tau_t} &= \dot X_{\tau_t}+ \frac{1}{\dot \beta_{\tau_t}}\ddot X_{\tau} - \frac{\ddot \beta_{\tau_t}}{\dot \beta_{\tau_t}^2} \dot X_{\tau_t} \\
& = \frac{1}{\dot \tau_t} \dot Y_t + \frac{1}{\dot \tau_t\dot{\tilde \beta}_{\tau_t}}\ddot Y_t - \frac{\ddot \tau_t}{\dot{\tilde{\beta}}_t\dot \tau_t^2} \dot Y_t - \frac{\ddot \beta_{\tau_t}}{\dot \tau_t\dot \beta_{\tau_t}^2 } \dot Y_t\\
& = \frac{1}{\dot \tau_t} \dot Y_t + \frac{1}{\dot \tau_t\dot{\tilde \beta}_{\tau_t}}\ddot Y_t - \frac{\ddot \tau_t}{\dot{\tilde{\beta}}_t\dot \tau_t^2} \dot Y_t - \frac{\ddot {\tilde\beta}_t }{\dot \tau_t\dot{\tilde{\beta}}_{t}^2 } \dot Y_t + \frac{\ddot \tau_t}{\dot \tau_t^2 \dot{\tilde{ \beta}}_{\tau_t} } \dot Y_t\\
&= \frac{1}{\dot \tau_t}\left( \dot Y_t  + \frac{1}{\dot{\tilde \beta}_{\tau_t}}\ddot Y_t -  \frac{\ddot {\tilde\beta}_t }{\dot{\tilde{\beta}}_{t}^2 } \dot Y_t \right).
\end{align*}
Therefore the second equation
\begin{align*}
\nabla^2 h(Z_{\tau_t})\dot Z_{\tau_t} = - \dot \beta_{\tau_t}(\nabla h(X_{\tau_t}) -  \nabla h(Z_{\tau_t}) -  \frac{1}{\mu}\nabla f(X_{\tau_t}))
\end{align*}
can be written, 
\begin{align*}
\frac{1}{\dot \tau_t}\left(\nabla^2 h\left(Y_t +  \frac{1}{\dot{\tilde \beta}_{t}} \dot Y_{t}\right) \left( \dot Y_t  + \frac{1}{\dot{\tilde \beta}_{\tau_t}}\ddot Y_t -  \frac{\ddot {\tilde\beta}_t }{\dot{\tilde{\beta}}_{t}^2 } \dot Y_t \right) \right)= - \frac{\dot{\tilde{\beta}}_{t}}{\dot \tau_t}\left(\nabla h(Y_t)- \nabla h\left(Y_t +  \frac{1}{\dot{\tilde \beta}_{t}} \dot Y_{t}\right) -  \frac{1}{\mu}\nabla f(Y_t)\right),
\end{align*}
which is the Euler-Lagrange equation for the sped-up curve, where the ideal 
scaling holds with equality. 
Finally, we mention that we can deduce the existence/uniqueness of solution for the proximal dynamics~\eqref{eq:prox_dyn} and \eqref{eq:prox_dyn2} from the  existence/uniqueness of solution for dynamics~\eqref{eq:el1} and~\eqref{eq:el2}, given the difference between these dynamics is that~\eqref{eq:prox_dyn}~\eqref{eq:prox_dyn2} have an extra Lipschitz-continuous vector field. Thus, the Cauchy-Lipschitz theorem can be readily applied to the proximal dynamics and the same arguments can be made regarding time-dilation. 

\section{Additional Observations}
\label{Sec:AddObv}
  \subsection{Proximal algorithms}
  \label{Sec:Prox}
  \subsubsection{Convex functions~\cite{Tseng08,BeckTeboulle09,Nesterov13}}
In 2009, Beck and Teboulle introduced FISTA, which is a method for minimizing the composite of two convex functions 
\begin{align}\label{eq:composite}
f(x) = \varphi(x) + \psi(x)
\end{align}
where $\varphi$ is  $(1/\epsilon)$-smooth and $\psi$ is simple. 
The canonical example of this is $\psi(x) = \|x\|_1$, which
defines the $\ell_1$-ball. The following proposition provides 
dynamical intuition for momentum algorithms derived for this setting.
\begin{proposition}\label{prop:prox_dynamics}
Define $f = \varphi + \psi$ and assume $\varphi$ and $\psi$ are convex. Under the ideal scaling condition~\eqref{Eq:IdeScaBet}, Lyapunov function~\eqref{eq:Lyap1} can be used to show that solutions to dynamics
\begin{subequations}\label{eq:prox_dyn}
\begin{align}
Z_t &= X_t + e^{-\alpha_t} \dot X_t \label{eq:zup}\\
\frac{d}{dt}\nabla h(Z_t) &= -e^{\alpha_t+ \beta_t} (\nabla \varphi(X_t)+ \nabla \psi(Z_t))\label{eq:comp}
\end{align}
\end{subequations}
satisfy  $f(X_t) - f(x^\ast) \leq O(e^{-\beta_t})$.
\end{proposition}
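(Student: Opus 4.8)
The plan is to follow the proof of Proposition~\ref{Prop:WeakLyap} almost line for line, the only genuinely new ingredient being how to absorb the composite term $\nabla\psi(Z_t)$ that the dynamics~\eqref{eq:comp} forces into the analysis. Writing $Z_t = X_t + e^{-\alpha_t}\dot X_t$ as in~\eqref{eq:zup}, I would start from the same identity used in Appendix~\ref{App:deriv_lyap}, namely $\frac{d}{dt}D_h(x, Z_t) = -\langle\frac{d}{dt}\nabla h(Z_t), x - Z_t\rangle$, and substitute~\eqref{eq:comp} to get
\[
\frac{d}{dt}D_h(x, Z_t) = e^{\alpha_t+\beta_t}\langle\nabla\varphi(X_t), x - Z_t\rangle + e^{\alpha_t+\beta_t}\langle\nabla\psi(Z_t), x - Z_t\rangle,
\]
and then process the two inner products separately.

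The $\varphi$-piece is handled verbatim as in Proposition~\ref{Prop:WeakLyap}: split $x - Z_t = (x - X_t) - e^{-\alpha_t}\dot X_t$, rewrite $-e^{\beta_t}\langle\nabla\varphi(X_t),\dot X_t\rangle = -\frac{d}{dt}(e^{\beta_t}\varphi(X_t)) + \dot\beta_t e^{\beta_t}\varphi(X_t)$, decompose $e^{\alpha_t} = \dot\beta_t + (e^{\alpha_t}-\dot\beta_t)$, and use convexity of $\varphi$ on $\dot\beta_t e^{\beta_t}[\varphi(X_t) + \langle\nabla\varphi(X_t), x - X_t\rangle] \le \dot\beta_t e^{\beta_t}\varphi(x)$, leaving a residual $e^{\beta_t}(e^{\alpha_t}-\dot\beta_t)\langle\nabla\varphi(X_t), x - X_t\rangle$. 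For the $\psi$-piece I would use convexity of $\psi$ twice: first the subgradient inequality at $Z_t$, $\langle\nabla\psi(Z_t), x - Z_t\rangle \le \psi(x) - \psi(Z_t)$, and then the subgradient inequality at $X_t$, $\psi(Z_t) \ge \psi(X_t) + e^{-\alpha_t}\langle\nabla\psi(X_t),\dot X_t\rangle$, so that $-e^{\alpha_t+\beta_t}\psi(Z_t) \le -e^{\alpha_t+\beta_t}\psi(X_t) - e^{\beta_t}\langle\nabla\psi(X_t),\dot X_t\rangle$; the last inner product collapses into $-\frac{d}{dt}(e^{\beta_t}\psi(X_t)) + \dot\beta_t e^{\beta_t}\psi(X_t)$ exactly as for $\varphi$, and the same $e^{\alpha_t} = \dot\beta_t + (e^{\alpha_t}-\dot\beta_t)$ bookkeeping turns this piece into $\dot\beta_t e^{\beta_t}\psi(x) - \frac{d}{dt}(e^{\beta_t}\psi(X_t))$ plus a residual $e^{\beta_t}(e^{\alpha_t}-\dot\beta_t)(\psi(x) - \psi(X_t))$.

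Adding the two pieces and writing $f = \varphi+\psi$ gives $\frac{d}{dt}D_h(x, Z_t) \le -\frac{d}{dt}\{e^{\beta_t}(f(X_t) - f(x))\} + e^{\beta_t}(e^{\alpha_t}-\dot\beta_t)[\langle\nabla\varphi(X_t), x - X_t\rangle + \psi(x) - \psi(X_t)]$. Bounding $\langle\nabla\varphi(X_t), x - X_t\rangle \le \varphi(x) - \varphi(X_t)$ by convexity, the bracket is at most $f(x) - f(X_t)$, which is $\le 0$ at $x = x^\ast$; since the ideal scaling~\eqref{Eq:IdeScaBet} gives $e^{\alpha_t} - \dot\beta_t \ge 0$, the residual is nonpositive at $x = x^\ast$, and when $\dot\beta_t = e^{\alpha_t}$ it vanishes for every $x$. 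This is exactly the assertion that $\mathcal{E}_t$ of~\eqref{eq:Lyap1}, with $f = \varphi+\psi$, is a Lyapunov function for~\eqref{eq:prox_dyn}; integrating $\mathcal{E}_t \le \mathcal{E}_0$ and dropping the nonnegative $D_h$ term then yields $f(X_t) - f(x^\ast) \le e^{-\beta_t}\mathcal{E}_0 = O(e^{-\beta_t})$.

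The main obstacle is confined to the $\psi$-piece: the dynamics evaluate $\nabla\psi$ at the auxiliary point $Z_t$, whereas to close the argument one needs it repackaged into the total derivative $\frac{d}{dt}(e^{\beta_t}\psi(X_t))$ at $X_t$, and the clean device for that is the two-point use of convexity above. The one thing that truly must be checked is the orientation of each of those two subgradient inequalities together with the sign of $e^{\alpha_t} - \dot\beta_t$ under~\eqref{Eq:IdeScaBet}; a single sign error there destroys the nonpositivity of the residual term. Once the $\psi$-term is under control, the remainder is the same $e^{\alpha_t} = \dot\beta_t + (e^{\alpha_t}-\dot\beta_t)$ accounting already carried out in Appendix~\ref{App:deriv_lyap}.
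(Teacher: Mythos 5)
Your proposal is correct and follows essentially the same route as the paper's proof: the Bregman identity, the standard treatment of the $\varphi$-term from Proposition~\ref{Prop:WeakLyap}, and crucially the same two-point use of convexity of $\psi$ (the subgradient inequality at $Z_t$ followed by the one at $X_t$) to repackage $\nabla\psi(Z_t)$ into the total derivative $\frac{d}{dt}(e^{\beta_t}\psi(X_t))$. If anything, your explicit bookkeeping of the residual $e^{\beta_t}(e^{\alpha_t}-\dot\beta_t)[\langle\nabla\varphi(X_t),x-X_t\rangle+\psi(x)-\psi(X_t)]$ is slightly more careful than the paper, which passes from factors of $e^{\alpha_t+\beta_t}$ to $\dot\beta_t e^{\beta_t}$ without isolating that term.
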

The same Lyapunov argument can be made for the dynamics \eqref{eq:prox_dyn} if we replace $\nabla \psi(Z_t)$ with a directional subgradient at the position $Z_t$, provided $\beta_t = p\log t$ for $p \in \mathbb{R}$. 
\begin{proof}
\begin{align*}
 \frac{d}{dt}D_h\left(x, Z_t\right)  
 &= -\left\langle \frac{d}{dt} \nabla h\left(Z_t\right), x - Z_t\right\rangle \, \notag\\%\label{eq:apply-identity}\\
&=    e^{\alpha_t +\beta_t}\left\langle\nabla \varphi(X_t), x  - X_{t} - e^{-\alpha_t} \dot X_{t} \right\rangle  + e^{\alpha_t +\beta_t}\left\langle\nabla \psi(Z_t), x  -Z_t \right\rangle\, \\ %\label{eq:apply-dynamics2}   \\
&\leq    - \frac{d}{dt} \Big\{e^{\beta_t}\left(\varphi(X_t) - \varphi(x)\right)\Big\}  + e^{\alpha_t +\beta_t}\left\langle\nabla \psi(Z_t), x  -Z_t \right\rangle\,\\ %\label{eq:apply-dynamics2}   \\
&\leq  - \frac{d}{dt} \Big\{e^{\beta_t}\left(\varphi(X_t) - \varphi(x)\right)\Big\}  + \dot \beta_t e^{\beta_t}( \psi(x) - \psi(Z_t))\, \\%\label{eq:apply-dynamics2}   \\
&\leq  - \frac{d}{dt} \Big\{e^{\beta_t}\left(\varphi(X_t) - f(x)\right)\Big\}- \dot \beta_t e^{\beta_t}(\psi(X_t) +\langle \nabla \psi(X_t), Z_t - X_t\rangle) \,\\% \label{eq:apply-dynamics2}   \\
&= - \frac{d}{dt} \Big\{e^{\beta_t}\left(\varphi(X_t) - f(x)\right)\Big\}- \dot \beta_t e^{\beta_t}\psi(X_t) -e^{\beta_t}\langle \nabla \psi(X_t), \dot X_t\rangle) \, \\%\label{eq:apply-dynamics2}   \\
&= - \frac{d}{dt} \Big\{e^{\beta_t}\left(f(X_t) - f(x)\right)\Big\}.
\end{align*}
%\end{subequations}
The first line follows from the Bregman identity~\eqref{eq:iden}. 
The second line plugs in the dynamics \eqref{eq:zup2} and \eqref{eq:comp2}. 
The third lines follows from~\eqref{Eq:LyapAnal}. The fourth and fifth 
lines follow from convexity. The sixth line plugs in the dynamics~\eqref{eq:comp2} 
and the last line follows from application of the chain rule.
\end{proof}

Next, to show results for dynamics when subgradients of the function are used, 
we adopt the setting of Su, Boyd and Candes~\cite[p.35]{SuBoydCandes}. 
First, we define the subgradient through the following lemma.
\begin{lemma}[Rockafellar, 1997] For any convex function $f$ and any $x, v \in \mathbb{R}^n$, the directional derivative $\lim_{\delta \rightarrow 0+}(f(x+\delta v) - f(x))/\delta$ exists, and can be evaluated as
\begin{align*}
\lim_{\delta \rightarrow 0+}(f(x+\delta v) - f(x))/\delta = \sup_{w \in \partial f(x)}\langle w, v\rangle.
\end{align*}
\end{lemma}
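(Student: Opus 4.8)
The plan is to split the claim into two parts: the existence of the directional derivative, and the identification of its value with the support function of $\partial f(x)$. Throughout I take $x$ to lie in the interior of the domain of $f$ (which in the intended applications is all of $\mathbb{R}^n$), so that $f$ is finite and locally bounded near $x$ and $\partial f(x)\neq\emptyset$. For existence, the key fact is that the difference quotient $\delta \mapsto \big(f(x+\delta v) - f(x)\big)/\delta$ is nondecreasing on $(0,\infty)$: for $0 < \delta_1 < \delta_2$, write $x + \delta_1 v = (1 - \delta_1/\delta_2)\,x + (\delta_1/\delta_2)\,(x + \delta_2 v)$ and apply convexity of $f$; rearranging gives the monotonicity. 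Since $f$ is locally bounded near $x$, this monotone quotient is bounded below, so the limit as $\delta \to 0^+$ exists and equals $\inf_{\delta > 0}\big(f(x+\delta v) - f(x)\big)/\delta =: f'(x;v)$.

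The inequality $f'(x;v) \ge \sup_{w \in \partial f(x)}\langle w, v\rangle$ is immediate: for any $w \in \partial f(x)$, the subgradient inequality at the point $x + \delta v$ gives $f(x+\delta v) - f(x) \ge \delta\,\langle w, v\rangle$, and dividing by $\delta > 0$ and passing to the limit yields $f'(x;v) \ge \langle w, v\rangle$.

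For the reverse inequality I would show that for each fixed direction $v_0$ there is a subgradient $w \in \partial f(x)$ with $\langle w, v_0\rangle = f'(x;v_0)$. First one checks that $v \mapsto f'(x;v)$ is sublinear: positive homogeneity is clear from the definition, and subadditivity follows by averaging the difference quotients in directions $v$ and $v'$ using convexity of $f$. Then one applies the Hahn-Banach theorem: on the line $\mathbb{R}\,v_0$, the linear functional $t v_0 \mapsto t\,f'(x;v_0)$ is dominated by $f'(x;\cdot)$ — the only case needing thought, $t<0$, amounts to $f'(x;v_0) + f'(x;-v_0) \ge f'(x;0) = 0$, which is subadditivity. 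Extending this functional to all of $\mathbb{R}^n$ while preserving domination by $f'(x;\cdot)$ produces $w \in \mathbb{R}^n$ with $\langle w, v\rangle \le f'(x;v)$ for every $v$ and $\langle w, v_0\rangle = f'(x;v_0)$. Taking $\delta = 1$ in the infimum characterization gives $f'(x; y-x) \le f(y) - f(x)$, hence $\langle w, y-x\rangle \le f(y) - f(x)$ for all $y$, that is $w \in \partial f(x)$. Combined with the previous paragraph this yields $f'(x;v_0) = \max_{w \in \partial f(x)}\langle w, v_0\rangle$, with the maximum attained.

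I expect the main obstacle to be this last direction: establishing that $f'(x;\cdot)$ is a genuine sublinear functional and then invoking Hahn-Banach (equivalently, a supporting-hyperplane argument) to realize its value as a subgradient pairing, together with the underlying requirement that $\partial f(x)$ be nonempty at interior points of the domain. The remaining steps are short convexity manipulations.
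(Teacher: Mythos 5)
Your proof is correct, but there is nothing in the paper to compare it against: the paper states this lemma as a citation to Rockafellar and offers no proof of its own, using it only to justify the definition of a directional subgradient in the proximal-dynamics appendix. Your argument is the standard one from convex analysis (Rockafellar, Theorems 23.1--23.2 and 23.4): monotonicity of the difference quotient in $\delta$ gives existence of the limit as an infimum, the subgradient inequality gives $f'(x;v)\geq \sup_{w\in\partial f(x)}\langle w,v\rangle$, and sublinearity of $f'(x;\cdot)$ plus a Hahn--Banach extension from the line $\mathbb{R}v_0$ produces a subgradient attaining the value, closing the other direction. All the steps you outline go through; the only point worth tightening is the claim that the monotone quotient is bounded below, which is cleanest to justify either by noting that a finite convex function is locally Lipschitz, or by the symmetric convexity inequality $\bigl(f(x+\delta v)-f(x)\bigr)/\delta \geq \bigl(f(x)-f(x-\delta v)\bigr)/\delta$ together with local boundedness (one should avoid deducing it from $\partial f(x)\neq\emptyset$ alone, since nonemptiness of the subdifferential is itself established by the same supporting-hyperplane machinery you invoke later). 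Your observation that the Hahn--Banach step yields a maximum, not merely a supremum, is a genuine strengthening of the statement as quoted.
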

\begin{definition}
A Borel-measurable function $G_f(x,v)$ defined on $\mathbb{R}^n \times \mathbb{R}^n$ 
is said to be a directional subgradient of $f$ if 
\begin{align*}
G_f(x,v) &\in \partial f(X)\\
\langle G_f(x,v),v\rangle& = \sup_{w\in \partial f(x)}\langle w, v\rangle,
\end{align*}
for all $x,v$.
\end{definition} 
This guarantees the existence of a directional derivative.  Now we establish the following theorem (similar to \cite[Thm 24]{SuBoydCandes}):
\begin{theorem}
Given the sum of two convex functions $f(x) = \varphi(x) + \psi(x)$ with directional subgradient $G_\psi(x,v)$, assume that the second-order ODE 
\begin{align*}
Z_t &= X_t + \frac{t}{p} \dot X_t\\
\frac{d}{dt} \nabla h(Z_t) &= -pt^{p-1}(G_\varphi(X_t, \dot X_t) + G_\psi(Z_t, \dot Z_t))
\end{align*}
admits a solution $X_t$ on $[0,\alpha)$ for some $\alpha>0$. Then for any $0<t<\alpha$, we have $f(X_t) - f(x) \leq O(1/t^p)$.
\end{theorem}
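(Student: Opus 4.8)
The statement is the nonsmooth analogue of Proposition~\ref{prop:prox_dynamics}, obtained by taking $\beta_t = p\log t$ (so that $e^{\beta_t} = t^p$ and, under the first ideal scaling, $e^{-\alpha_t} = t/p$, which matches $Z_t = X_t + \tfrac{t}{p}\dot X_t$) and replacing $\nabla\varphi$, $\nabla\psi$ by the directional subgradients $G_\varphi$, $G_\psi$. The plan is to run the Lyapunov argument of Proposition~\ref{prop:prox_dynamics} along the given solution $X_t$, using the specialization of~\eqref{eq:Lyap1},
\begin{align*}
\mathcal{E}_t \;=\; D_h(x, Z_t) + t^p\bigl(f(X_t) - f(x)\bigr),
\end{align*}
and showing it is nonincreasing on $(0,\alpha)$. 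Since $t^p\bigl(f(X_t)-f(x)\bigr) \le \mathcal{E}_t$, monotonicity gives $f(X_t) - f(x) \le \mathcal{E}_s/t^p$ for any $s\in(0,t]$; fixing a small $s\in(0,\alpha)$ already yields the $O(1/t^p)$ bound, and letting $s\to 0^+$ (using that $Z_s\to X_0$ and $s^p(f(X_s)-f(x))\to 0$ when $\dot X$ is bounded near $0$) sharpens the constant to $D_h(x, X_0)$. Because $\beta_t = p\log t$ makes the ideal scaling hold with equality, the conclusion holds for every $x\in\X$, in particular $x = x^\ast$.

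The core computation is exactly the chain of inequalities in the proof of Proposition~\ref{prop:prox_dynamics}, read in the almost-everywhere sense. Starting from the Bregman identity~\eqref{eq:iden}, $\tfrac{d}{dt}D_h(x,Z_t) = -\langle \tfrac{d}{dt}\nabla h(Z_t),\, x - Z_t\rangle$, and substituting the dynamics gives $\tfrac{d}{dt}D_h(x,Z_t) = p t^{p-1}\langle G_\varphi(X_t,\dot X_t), x - Z_t\rangle + p t^{p-1}\langle G_\psi(Z_t,\dot Z_t), x - Z_t\rangle$. For the $\varphi$-term one splits $x-Z_t = (x-X_t) - \tfrac{t}{p}\dot X_t$, bounds $\langle G_\varphi(X_t,\dot X_t), x-X_t\rangle \le \varphi(x)-\varphi(X_t)$ by convexity (any element of $\partial\varphi(X_t)$ suffices), and identifies $\langle G_\varphi(X_t,\dot X_t),\dot X_t\rangle = \tfrac{d}{dt}\varphi(X_t)$ for a.e.\ $t$ via the Rockafellar directional-derivative lemma together with a.e.\ differentiability of the locally Lipschitz map $t\mapsto\varphi(X_t)$; this contributes $pt^{p-1}\varphi(x) - \tfrac{d}{dt}\bigl(t^p\varphi(X_t)\bigr)$. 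For the $\psi$-term one uses $G_\psi(Z_t,\dot Z_t)\in\partial\psi(Z_t)$ to get $\langle G_\psi(Z_t,\dot Z_t), x-Z_t\rangle \le \psi(x)-\psi(Z_t)$, and then the lower bound $\psi(Z_t)\ge \psi(X_t) + \langle G_\psi(X_t,\dot X_t), Z_t-X_t\rangle = \psi(X_t) + \tfrac{t}{p}\tfrac{d}{dt}\psi(X_t)$ (again convexity of $\psi$ plus the directional property), contributing $pt^{p-1}\psi(x) - \tfrac{d}{dt}\bigl(t^p\psi(X_t)\bigr)$. Summing the two and using $f = \varphi + \psi$, every $X_t$-dependent term telescopes into $\tfrac{d}{dt}\bigl(t^p f(X_t)\bigr)$ and the constants into $pt^{p-1}f(x) = \tfrac{d}{dt}\bigl(t^p f(x)\bigr)$, so $\dot{\mathcal{E}}_t \le 0$ for a.e.\ $t$, and integration gives $\mathcal{E}_t\le\mathcal{E}_s$.

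The main obstacle is not the algebra but the measure-theoretic bookkeeping that the nonsmooth setting forces: one must (i) show $t\mapsto\mathcal{E}_t$ is locally absolutely continuous on $(0,\alpha)$ — which follows from absolute continuity of $t\mapsto\nabla h(Z_t)$ guaranteed by the (Carathéodory) ODE, plus local Lipschitzness of the finite convex functions $\varphi,\psi,f$ and of $t\mapsto X_t$ — so that the fundamental theorem of calculus legitimizes passing from $\dot{\mathcal{E}}_t\le 0$ a.e.\ to $\mathcal{E}$ nonincreasing; (ii) justify the a.e.\ identities $\tfrac{d}{dt}\varphi(X_t) = \langle G_\varphi(X_t,\dot X_t),\dot X_t\rangle$ and $\tfrac{d}{dt}\psi(X_t) = \langle G_\psi(X_t,\dot X_t),\dot X_t\rangle$, which rest on the facts that a locally Lipschitz scalar function is differentiable almost everywhere and that the one-sided directional derivative of a convex function in direction $v$ equals $\langle G(x,v),v\rangle$, so the two coincide wherever the honest derivative exists; and (iii) ensure $X_t,Z_t$ stay in $\X$ (and in a compact set on each closed subinterval of $(0,\alpha)$) so the subgradients appearing are well defined and bounded. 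These points are handled exactly as in Su, Boyd and Candes~\cite[Thm 24]{SuBoydCandes}; the remainder is a verbatim transcription of the proof of Proposition~\ref{prop:prox_dynamics}.
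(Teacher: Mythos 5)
Your proposal is correct and follows essentially the same route as the paper: the same Lyapunov function $D_h(x,Z_t)+t^p\bigl(f(X_t)-f(x)\bigr)$, the same splitting of $x-Z_t$ into $(x-X_t)-\tfrac{t}{p}\dot X_t$, the same convexity bounds for $\varphi$ at $X_t$ and for $\psi$ at $Z_t$ versus $X_t$, and the same identification of the directional derivatives $\langle G(\cdot,\dot X_t),\dot X_t\rangle$ along the trajectory. The only (immaterial) difference is in how monotonicity is formalized: the paper bounds the one-sided difference quotient $\limsup_{\Delta t\to 0^+}(\mathcal{E}_{t+\Delta t}-\mathcal{E}_t)/\Delta t\le 0$ and invokes continuity of $\mathcal{E}_t$, in the style of Su, Boyd and Candes, whereas you argue via local absolute continuity plus an almost-everywhere derivative bound; both are standard and adequate here.
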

\begin{proof}
We follow the framework of Su, Boyd and Candes~\cite[pg. 36]{SuBoydCandes}. It suffices to establish that our Lyapunov function is monotonically decreasing. Although $\mathcal{E}_t$ may not be differentiable, we can study $\mathcal{E}(t + \Delta t) - \mathcal{E}(t))/\Delta t$ for small $\Delta t>0$.
For the first term, note that
\begin{align*}
(t+ \Delta t)^p( f(X_{t + \Delta t}) - f(x)) - t^p(f(X_t) - f(x)) &= t^p (f(X_{t + \Delta t}) - f(X_t))\\
&  + pt^{p-1}(f(X_{t+\Delta t}) - f(x))\Delta t + o(\Delta t)\\
&= t^p \langle G_f(X_t, \dot X_t), \dot X_t\rangle \Delta t  \\
&  + pt^{p-1}(f(X_{t+\Delta t}) - f(x))\Delta t + o(\Delta t),
\end{align*}
where the second line follows since we assume $f$ is locally Lipschitz. 
The $o(\Delta t)$ does not affect the function in the limit: 
\begin{align}\label{eq:approx}
f(X_{t + \Delta t}) = f(X + \Delta t \dot X_t + o(\Delta t)) &= f(X + \Delta t \dot X_t) + o(\Delta t)\notag\\
&= f(X_t) + \langle  G_f(X_t, \dot X_t), \dot X_t\rangle \Delta t + o(\Delta t).
\end{align}
The second term, $D_h(x, X_t + \frac{t}{p} \dot X_t)$, is differentiable, with derivative $-\left\langle \frac{d}{dt}\nabla h(Z_t) , x - Z_t\right\rangle$. Hence,
\begin{align*}
&D_h\Big(x, X_{t+\Delta t} + \frac{t+\Delta t}{p} \dot X_{t+\Delta t}\Big) - D_h\Big(x, X_{t} + \frac{t}{p} \dot X_t\Big) \\
&=-\left\langle \frac{d}{dt} \nabla h(Z_t), x - Z_t\right\rangle \Delta t + o(\Delta t)\\
&= pt^{p-1}\langle G_\varphi(X_t, \dot X_t) + G_\psi(Z_t, \dot Z_t)), x - Z_t\rangle  \Delta t + o(\Delta t)\\
&=p t^{p-1} \langle G_\varphi(X_t, \dot X_t), x - X_t\rangle  \Delta t + t^p\langle G_\varphi(X_t, \dot X_t),\dot X_t\rangle \Delta t  + pt^{p-1}\langle  G_\psi(Z_t, \dot Z_t)), x - Z_t\rangle \Delta t + o(\Delta t)\\
&\leq -p t^{p-1}(\varphi(X_t) - \varphi(x))  \Delta t + t^p\langle G_\varphi(X_t, \dot X_t),\dot X_t\rangle \Delta t  -p t^{p-1}(\psi(Z_t) - \psi(x))\Delta t + o(\Delta t)\\
&\leq -p t^{p-1}(\varphi(X_t) - \varphi(x))  \Delta t + t^p\langle G_\varphi(X_t, \dot X_t),\dot X_t\rangle \Delta t  -p t^{p-1}(\psi(X_t) - \psi(x))\Delta t\\
& \ \ + t^p \langle G_\psi(X_t, \dot X_t), \dot X_t\rangle \Delta t + o(\Delta t)\\
&= -p t^{p-1}(f(X_t) - f(x))  \Delta t + t^p\langle G_f(X_t, \dot X_t),\dot X_t\rangle \Delta t.
\end{align*}
The last two inequalities follows from the convexity of $f = \varphi + \psi$. In the last inequality, we have used the identity $ Z_t - X_t= \frac{t}{p}\dot X_t$ in the term $pt^{p-1}\langle G_\psi(X_t, Z_t - X_t), Z_t - X_t\rangle$. Combining everything we have shown
\begin{align*}
\lim \sup_{\Delta t \rightarrow 0^+} \frac{\mathcal{E}_{t + \Delta t} - \mathcal{E}_{t }}{\Delta t} \leq 0,
\end{align*} 
which along with the continuity of $\mathcal{E}_{t }$, ensures $\mathcal{E}_{t}$ is a non-increasing of time. 
\end{proof}

\paragraph{Algorithm.}
 %\footnote{this construction is a simple extension of Su Boyd and Candes~\cite[Ap. C]{SuBoydCandes14}}.
Now we will discretize the dynamics~\eqref{eq:prox_dyn}. We assume the ideal scaling~\eqref{Eq:IdeScaBet} holds with equality. Using the same identifications $\dot X_t = \frac{x_{k+1} - x_k}{\delta}$, $\frac{d}{dt} \nabla h(Z_t) = \frac{\nabla h(z_{k+1}) - \nabla h(z_k)}{\delta}$ and $\frac{d}{dt}e^{\beta_t} = \frac{A_{k+1} - A_k}{\delta}$ , we apply the implicit-Euler scheme to~\eqref{eq:comp} and the explicit-Euler scheme to \eqref{eq:zup}. Doing so, we obtain a proximal mirror descent update, %simply optimizing over the simple function $\psi(x)$.
\begin{align*}
z_{k+1} = \arg \min_{z\in \X} \left\{  \psi(z) + \langle \nabla \varphi(x_{k+1}), z\rangle + \frac{1}{\alpha_k}D_h(z, z_k) \right\},
\end{align*}
and the sequence \eqref{eq:coup}, respectively. We write the algorithm as 
\begin{subequations}\label{eq:algprox1}
\begin{align}
x_{k+1} &= \tau_k z_k + (1- \tau_k) y_k \label{eq:coup8}\\
\nabla h(z_{k+1}) - \nabla h(z_k) &= -\alpha_k \nabla \varphi(x_{k+1}) - \alpha_k \nabla \psi(z_{k+1})\label{eq:proxstep}\\
y_{k+1} &= \mathcal{G}(x)\label{eq:grad8},
\end{align}
\end{subequations}
where we have similarly substituted the state $x_k$ with a sequence $y_k$, and added the update $y_{k+1} = \mathcal{G}(x)$. 
%, respectively. Like the accelerated gradient family, we add 
%The analysis will also allow for any element of the subgradient, $\partial \psi(z_{k+1})$, to be used in \eqref{eq:proxstep}. 
We summarize how the initial bound scales for algorithm~\eqref{eq:algprox1} in the following proposition. 
\begin{proposition}\label{prop:fista}
Assume $h$ is strongly convex, $\varphi$ is  $(1/\epsilon)$-smooth and $\psi$ is simple but not necessarily smooth. Using the Lyapunov function~\eqref{eq:lyap}, the following initial bound
\begin{align*}
E_{k+1} - E_k  \leq \varepsilon_{k+1},
\end{align*}
can be shown for algorithm~\eqref{eq:algprox1}, where the error scales as
\begin{align*}
\varepsilon_{k+1} = - \frac{\sigma}{2}\|z_{k+1} - z_{k}\|^2 &+ \ \frac{A_{k+1}}{2\epsilon}\|\tau_k z_k + (1- \tau_k) y_k - y_{k+1}\|^2  + A_{k+1} \psi(y_{k+1}) - A_k \psi(y_k) - \alpha_k\psi(z_{k+1}) .
\end{align*}
\end{proposition}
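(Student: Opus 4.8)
The plan is to follow the template of the proof of Proposition~\ref{Prop:WeakBound} given in Appendix~\ref{App:WeakBound}, adapted to the composite objective $f = \varphi + \psi$ and to the fact that only $\varphi$ is smooth. First I would read off the variational conditions satisfied by algorithm~\eqref{eq:algprox1}: from the prox step~\eqref{eq:proxstep}, $\nabla h(z_{k+1}) - \nabla h(z_k) = -\alpha_k\big(\nabla\varphi(x_{k+1}) + \nabla\psi(z_{k+1})\big)$, where $\nabla\psi(z_{k+1})$ denotes the subgradient of $\psi$ at $z_{k+1}$ produced by the optimality condition of the prox subproblem; and from the coupling~\eqref{eq:coup8}, $A_{k+1}x_{k+1} = \alpha_k z_k + A_k y_k$, using $\tau_k = \alpha_k/A_{k+1}$ and $\alpha_k = A_{k+1} - A_k$, so that $\tau_k z_k + (1-\tau_k)y_k = x_{k+1}$.

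Next I would expand the one-step difference of the Lyapunov function~\eqref{eq:lyap}. Writing $E_{k+1} - E_k = \big[D_h(x, z_{k+1}) - D_h(x, z_k)\big] + A_{k+1}f(y_{k+1}) - A_k f(y_k) - \alpha_k f(x)$ and applying the Bregman difference identity $D_h(x, z_{k+1}) - D_h(x, z_k) = -\langle \nabla h(z_{k+1}) - \nabla h(z_k),\, x - z_{k+1}\rangle - D_h(z_{k+1}, z_k)$ (as in the proofs of Propositions~\ref{Prop:Forward1} and~\ref{prop:strLyap}), the prox step converts the bracketed quantity into $\alpha_k\langle\nabla\varphi(x_{k+1}),\, x - z_{k+1}\rangle + \alpha_k\langle\nabla\psi(z_{k+1}),\, x - z_{k+1}\rangle - D_h(z_{k+1}, z_k)$. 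I would then peel off the pieces of $\varepsilon_{k+1}$ in turn: convexity of $\psi$ at $z_{k+1}$ gives $\alpha_k\langle\nabla\psi(z_{k+1}),\, x - z_{k+1}\rangle \le \alpha_k\psi(x) - \alpha_k\psi(z_{k+1})$, and since $f(x) = \varphi(x) + \psi(x)$ and $f(y_{k+1}) = \varphi(y_{k+1}) + \psi(y_{k+1})$, the $\psi$-contributions collect to exactly $A_{k+1}\psi(y_{k+1}) - A_k\psi(y_k) - \alpha_k\psi(z_{k+1})$; the $\sigma$-strong convexity of $h$ gives $-D_h(z_{k+1}, z_k) \le -\tfrac{\sigma}{2}\|z_{k+1} - z_k\|^2$; and $(1/\epsilon)$-smoothness of $\varphi$ applied to $A_{k+1}\varphi(y_{k+1})$ bounds it by $A_{k+1}\varphi(x_{k+1}) + A_{k+1}\langle\nabla\varphi(x_{k+1}),\, y_{k+1} - x_{k+1}\rangle + \tfrac{A_{k+1}}{2\epsilon}\|y_{k+1} - x_{k+1}\|^2$, producing the remaining stated error term $\tfrac{A_{k+1}}{2\epsilon}\|\tau_k z_k + (1-\tau_k)y_k - y_{k+1}\|^2$ once $x_{k+1}$ is identified via the coupling.

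The main obstacle is the last step: showing that the residual collection of $\varphi$-function values and $\nabla\varphi(x_{k+1})$-inner products left over after these reductions is nonpositive. I would substitute the coupling identity $A_{k+1}x_{k+1} = \alpha_k z_k + A_k y_k$ to re-express $\alpha_k(x - z_{k+1})$ through $x - x_{k+1}$, $y_k - x_{k+1}$ and $z_k - z_{k+1}$, check that the coefficient of $\varphi(x_{k+1})$ vanishes (because $A_{k+1} - A_k - \alpha_k = 0$), and then invoke convexity of $\varphi$ in the forms $\varphi(x) \ge \varphi(x_{k+1}) + \langle\nabla\varphi(x_{k+1}),\, x - x_{k+1}\rangle$ and $\varphi(y_k) \ge \varphi(x_{k+1}) + \langle\nabla\varphi(x_{k+1}),\, y_k - x_{k+1}\rangle$ to bring the remaining terms down to at most zero, just as the proof of Proposition~\ref{Prop:WeakBound} ends by invoking convexity of $f$ on the analogous collection of terms. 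This bookkeeping is where the precise parameterization $\tau_k = \alpha_k/A_{k+1}$ enters and where the greatest care is needed, since four distinct iterates ($x_{k+1}$, $z_{k+1}$, $y_k$, $y_{k+1}$) must be tracked simultaneously. Two further points warrant attention: because $\psi$ is nonsmooth, the mirror step~\eqref{eq:proxstep} must be implicit in $\psi$ (a backward-Euler discretization of~\eqref{eq:comp}) so that the subgradient inequality for $\psi$ can be applied at the comparison point $z_{k+1}$ rather than at $z_k$; and, unlike the smooth case, the $\psi$-values at $y_{k+1}$, $y_k$ and $z_{k+1}$ are carried unchanged into the error term rather than being bounded, which is why $\varepsilon_{k+1}$ retains an explicit $\psi$-dependent piece that only disappears once a concrete descent map $\mathcal{G}$ (a prox-gradient step) is substituted.
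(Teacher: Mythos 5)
Your plan follows the paper's own proof almost step for step: the same variational conditions from~\eqref{eq:proxstep} and~\eqref{eq:coup8}, the same Bregman difference identity to convert $D_h(x,z_{k+1})-D_h(x,z_k)$ into $\alpha_k\langle\nabla\varphi(x_{k+1})+\nabla\psi(z_{k+1}),x-z_{k+1}\rangle - D_h(z_{k+1},z_k)$, the subgradient inequality for $\psi$ at $z_{k+1}$, strong convexity of $h$ for the $-\tfrac{\sigma}{2}\|z_{k+1}-z_k\|^2$ term, smoothness of $\varphi$ at $x_{k+1}$ for the quadratic term, and convexity of $\varphi$ at $x$ and $y_k$ to close the argument. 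Your bookkeeping of the $\psi$-contributions is also exactly right.

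The one point that deserves a warning is the step you yourself flag as the main obstacle. After applying convexity of $\varphi$ at $x$ and at $y_k$ (the function values cancel since $A_{k+1}-A_k-\alpha_k=0$), the residual is not zero but
\begin{align*}
\bigl\langle \nabla\varphi(x_{k+1}),\; A_{k+1}y_{k+1} - \alpha_k z_{k+1} - A_k y_k \bigr\rangle
= A_{k+1}\langle\nabla\varphi(x_{k+1}), y_{k+1}-x_{k+1}\rangle + \alpha_k\langle\nabla\varphi(x_{k+1}), z_k - z_{k+1}\rangle,
\end{align*}
which convexity alone cannot make nonpositive for an arbitrary map $\mathcal{G}$; it vanishes precisely when $A_{k+1}y_{k+1}=\alpha_k z_{k+1}+A_k y_k$, i.e.\ under Tseng's choice~\eqref{eq:weirdG}, equivalently $x_{k+1}-y_{k+1}=\tau_k(z_k-z_{k+1})$. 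This is not really a defect of your strategy so much as an imprecision in the proposition as stated: the paper's own proof never establishes the general-$\mathcal{G}$ bound either, but instead substitutes~\eqref{eq:weirdG} from the first line (via~\eqref{eq:proxiden}) and proves the specialized error~\eqref{eq:cond82} directly. To make your argument airtight you should either carry the displayed cross term as an additional summand of $\varepsilon_{k+1}$ (so that it is visibly killed once $\mathcal{G}$ is specialized), or restrict the claim to the Tseng map as the paper implicitly does. Everything else in your proposal is sound and matches the paper's route.
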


%%
%%
%%
%%
%%Using the proximal update instead of~\eqref{eq:mir}, 
Tseng~\cite[Algorithm 1]{Tseng08}  showed that the map
\begin{align}\label{eq:weirdG}
\mathcal{G}(x) = \tau_{k} z_{k+1} + (1- \tau_{k}) y_{k}
\end{align}
% can be analyzed using Lyapunov function~\eqref{eq:lyap}. We summarize this result in the following proposition. 
can be used to simplify the error to the following, 
%\begin{proposition}\label{prop:fista}
%Assume $h$ is strongly convex, $\nabla \varphi$ is  $\frac{1}{\epsilon}$-Lipschitz and $\psi$ is simple but not smooth. Using the Lyapunov function~\eqref{eq:lyap}, the following bound
%\begin{align*}
%E_{k+1} - E_k  \leq \varepsilon_{k+1},
%\end{align*}
%holds for algorithm~\eqref{eq:agd} -- replacing~\eqref{eq:mir} with~\eqref{eq:proxstep} and using~\eqref{eq:weirdG} -- where the error scales as, 
\begin{align}\label{eq:cond82}
\varepsilon_{k+1} = - \frac{\sigma}{2}\|z_{k+1} - z_k\|^2 + \frac{A_{k+1}\tau_k^2}{2\epsilon}\|z_{k+1} - z_k\|^2.
\end{align}
%Subgradients of $\psi$ can also be used in dynamics~\eqref{eq:prox_dyn}, and in update~\eqref{eq:proxstep}.
%\end{proposition}
Notice that the condition necessary for the error to be non-positive
%\begin{align*}
%\frac{(A_{k+1} - A_k)^2}{A_{k+1}}\leq \epsilon\sigma,
%\end{align*}
is the same as the condition for accelerated gradient descent~\eqref{eq:gradbound}. Using the same polynomial, we can conclude an $O(1/\epsilon\sigma k^2)$ convergence rate.
\begin{proof}
We begin with the observation that the update~\eqref{eq:weirdG} and the convexity of $\psi$ allow us to show the inequality
\begin{align}\label{eq:proxiden}
 A_{k+1}\psi((1- \tau_k) y_k + \tau_k z_{k+1})&\leq A_{k+1}(1- \tau_k)\psi( y_k) + A_{k+1}\tau_k \psi( z_{k+1})%\notag\\&= A_k \psi(y_k) + \alpha_k\psi( z_{k+1}).
\end{align}
Thus we can conclude $A_{k+1}\psi(y_{k+1}) -A_k \psi(y_k) \leq \alpha_k\psi( z_{k+1})$. With this, the standard Lyapunov analysis follows:
\begin{align*}
E_{k+1} - E_k & = D_h(x, z_{k+1})  -  D_h(x, z_{k}) + A_{k+1}(f(y_{k+1}) - f(x)) - A_{k}(f(y_{k}) - f(x))\\ 
&\leq  D_h(x, z_{k+1})  -  D_h(x, z_{k}) + A_{k+1}(\varphi(y_{k+1}) - \varphi(x)) - A_{k}(\varphi(y_{k}) - \varphi(x)) +  \alpha_k(\psi(z_{k+1}) - \psi(x))\\
&=   \alpha_k \langle \nabla \varphi(x_{k+1}) + \nabla \psi(z_{k+1}), x - z_{k+1}\rangle  -  D_h(z_{k+1}, z_{k})  \\
&\quad+ A_{k+1}(\varphi(y_{k+1}) - \varphi(x)) - A_{k}(\varphi(y_{k}) - \varphi(x))+  \alpha_k(\psi(z_{k+1}) - \psi(x))\\
&\leq \alpha_k \langle \nabla \varphi(x_{k+1}), x - z_{k}\rangle + \alpha_k \langle \nabla \varphi(x_{k+1}), z_k - z_{k+1}\rangle  -  D_h(z_{k+1}, z_{k}) \\
&\quad+ A_{k+1}(\varphi(y_{k+1}) - \varphi(x)) - A_{k}(\varphi(y_{k}) - \varphi(x))\\
&=\alpha_k \langle \nabla \varphi(x_{k+1}), x - z_{k}\rangle + A_{k+1} \langle \nabla \varphi(x_{k+1}), x_{k+1} - y_{k+1}\rangle  -  D_h(z_{k+1}, z_{k}) \\
&\quad+ A_{k+1}(\varphi(y_{k+1}) - \varphi(x)) - A_{k}(\varphi(y_{k}) - \varphi(x))\\
&\leq \alpha_k \langle \nabla \varphi(x_{k+1}), x - z_{k}\rangle + \frac{A_{k+1}}{2\epsilon} \|x_{k+1} - y_{k+1}\|^2  -  D_h(z_{k+1}, z_{k}) \\
&\quad+ \alpha_k(\varphi(x_{k+1}) - \varphi(x)) + A_{k}(\varphi(x_{k+1}) - \varphi(y_{k})).
\end{align*}
The first inequality uses the identity~\eqref{eq:proxiden}. The second inequality follows from the convexity of $\psi$. The last line uses the $\frac{1}{\epsilon}$-smoothness of $\varphi$.
It simply remains to use the $\sigma$-strong convexity of $h$ and the identities~\eqref{eq:coup} and $x_{k+1} - y_{k+1} = \tau_k(z_{k+1} - z_k)$. Continuing from the last line, and using these properties, we have 
\begin{align*}
E_{k+1} - E_k &\leq \alpha_k \langle \nabla \varphi(x_{k+1}), x - x_{k+1}\rangle + \frac{A_{k+1}\tau_k^2}{2\epsilon} \|z_{k+1} - z_{k}\|^2  -  \frac{\sigma}{2}\|z_{k+1}- z_{k}\|^2 \\
&\quad+ \alpha_k(\varphi(x_{k+1}) - \varphi(x)) + A_{k}(\varphi(x_{k+1}) - \varphi(y_{k})+ \langle \nabla \varphi(x_{k+1}), y_k - x_{k+1}\rangle)\\
&\leq \frac{A_{k+1}\tau_k^2}{2\epsilon} \|z_{k+1} - z_{k}\|^2  -  \frac{\sigma}{2}\|z_{k+1}- z_{k}\|^2. 
\end{align*}
The last line follows from the convexity of $\varphi$. 
\end{proof}

%\label{Eq:LyapAnal2}
%\subsection{Proof of Remark}
%\label{App:Rem}
%The implicite Euler scheme applied to dynamics~\eqref{eq:zup} and \eqref{eq:comp} results in the sequence
%\begin{subequations}
%\begin{align}
%x_{k+1} &= \frac{\tau_k}{1+\tau_k} z_{k+1} + \frac{1}{1+\tau_k}x_k \label{eq:Ximp}\\
%\nabla h(z_{k+1}) - \nabla h(z_k) &= -\alpha_k \nabla \varphi(x_{k+1}) - \alpha_k \nabla \psi(z_{k+1}) \label{eq:Himp}
%\end{align}
%\end{subequations}
%\begin{align*}
%E_{k+1} - E_k &=- \langle \nabla h(z_{k+1}) - \nabla h(z_{k}), x - z_{k+1} \rangle + A_{k+1} (f(x_{k+1}) - f(x)) - A_k(f(x_k) - f(x))\\
%&\overset{\eqref{eq:Himp}}{=}\alpha_k\langle  \nabla \varphi(x_{k+1}) -  \nabla \psi(z_{k+1}), x - z_{k+1} \rangle + A_{k+1} (f(x_{k+1}) - f(x)) - A_k(f(x_k) - f(x))\\
%&\leq -\alpha_k(\psi(z_{k+1}) - \psi(x)) + \alpha_k\langle  \nabla \varphi(x_{k+1}),x - x_{k+1} \rangle + A_k \langle  \nabla \varphi(x_{k+1}),x_k - x_{k+1} \rangle\\
%&+ A_{k+1} (f(x_{k+1}) - f(x)) - A_k(f(x_k) - f(x))\\
%\end{align*}
%Where the inequality comes from the convexity of $\psi$ and the update~\eqref{eq:Ximp}. Using the convexity of $\varphi$ we have, 
%\begin{align*}
%&\overset{\eqref{eq:Ximp}}{\leq} -\alpha_k(\psi(z_{k+1}) - \psi(x)) - \alpha_k(\varphi(x_{k+1}) - \varphi(x)) + A_k (\varphi(x_{k+1}) - \varphi(x_k))\\
%&+ A_{k+1} (f(x_{k+1}) - f(x)) - A_k(f(x_k) - f(x))\\
%&= -\alpha_k(\psi(z_{k+1}) - \psi(x)) + A_{k+1}( \psi(x_{k+1}) - \psi(x)) - A_k(\psi(x_k) - \psi(x))\\
%&\overset{\eqref{eq:Ximp}}{\leq} 0.
%\end{align*}

\subsubsection{Strongly convex functions}
We study the problem of minimizing the composite objective $f = \varphi + \psi$
in the setting where $ \varphi$ is $(1/\epsilon)$-smooth and $\mu$-strongly convex and $\psi$ is simple but not smooth. Like the setting where $f$ is weakly convex, we begin with the following proposition concerning dynamics that are relevant for this setting. 
\begin{proposition}\label{prop:prox_dyn2}
Define $f = \varphi + \psi$ and assume $\varphi$ is $\mu$-strongly convex with respect to $h$ and $\psi$ is convex. Under the ideal scaling condition~\eqref{Eq:IdeScaBet}, Lyapunov function~\eqref{eq:Lyap2} can be used to show that solutions to dynamics,
\begin{subequations}\label{eq:prox_dyn2}
\begin{align}
Z_t &= X_t + e^{-\alpha_t} \dot X_t \label{eq:zup2}\\
\frac{d}{dt} \nabla h(Z_t)  &= \dot \beta_t\nabla h(X_t) - \dot \beta_t \nabla h(Z_t) - \frac{e^{\alpha_t}}{\mu} (\nabla \varphi(X_t)+ \nabla \psi(Z_t)),\label{eq:comp2}
\end{align}
\end{subequations}
satisfy $f(X_t) - f(x) \leq O(e^{-\beta_t})$.
\end{proposition}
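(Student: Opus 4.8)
The plan is to show that the candidate Lyapunov function from~\eqref{eq:Lyap2}, namely $\mathcal{E}_t = e^{\beta_t}\bigl(\mu D_h(x, Z_t) + f(X_t) - f(x)\bigr)$ with $Z_t = X_t + e^{-\alpha_t}\dot X_t$, is nonincreasing along solutions of the proximal dynamics~\eqref{eq:prox_dyn2}; integrating $\dot{\mathcal{E}}_t \le 0$ and discarding the nonnegative term $\mu D_h(x, Z_t)$ then gives $f(X_t) - f(x) \le e^{-\beta_t}\mathcal{E}_0 = O(e^{-\beta_t})$. This is the strongly convex analogue of Proposition~\ref{prop:prox_dynamics}, and the computation mirrors the proof of Proposition~\ref{prop:strLyap} in Appendix~\ref{app:strLyapproof}, with the additional potential $\psi$ absorbed exactly as in the proof of Proposition~\ref{prop:prox_dynamics}.

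First I would differentiate the Bregman term using the identity~\eqref{eq:iden}, which gives $\tfrac{d}{dt}\{e^{\beta_t}\mu D_h(x,Z_t)\} = \mu\dot\beta_t e^{\beta_t} D_h(x,Z_t) - \mu e^{\beta_t}\langle \tfrac{d}{dt}\nabla h(Z_t), x - Z_t\rangle$. Substituting the dynamics~\eqref{eq:comp2} for $\tfrac{d}{dt}\nabla h(Z_t)$ and applying the Bregman three-point identity~\eqref{eq:bregthree} collapses every term built from $\nabla h(X_t)$ and $\nabla h(Z_t)$ into $\mu\dot\beta_t e^{\beta_t}\bigl(D_h(x,X_t) - D_h(Z_t,X_t)\bigr)$, leaving the two gradient contributions $e^{\alpha_t+\beta_t}\langle\nabla\varphi(X_t), x-Z_t\rangle$ and $e^{\alpha_t+\beta_t}\langle\nabla\psi(Z_t), x-Z_t\rangle$.

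For the $\varphi$ term I would split $x - Z_t = (x - X_t) - e^{-\alpha_t}\dot X_t$ and use $\mu$-strong convexity of $\varphi$ with respect to $h$, i.e.\ $\langle\nabla\varphi(X_t), x - X_t\rangle + \mu D_h(x,X_t) \le \varphi(x) - \varphi(X_t)$, to cancel the $+\mu\dot\beta_t e^{\beta_t} D_h(x,X_t)$ term and produce $-\tfrac{d}{dt}\{e^{\beta_t}(\varphi(X_t) - \varphi(x))\}$ plus a residual proportional to $(e^{\alpha_t} - \dot\beta_t)e^{\beta_t}\langle\nabla\varphi(X_t), x - X_t\rangle$, exactly as in lines~\eqref{eq:apply-convexity}--\eqref{Eq:cond1} of the proof of Proposition~\ref{Prop:WeakLyap}. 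For the $\psi$ term, because $\nabla\psi$ is evaluated at $Z_t$, I would first use convexity of $\psi$ to write $\langle\nabla\psi(Z_t), x-Z_t\rangle \le \psi(x) - \psi(Z_t)$, then convexity again as $-\psi(Z_t) \le -\psi(X_t) - \langle\nabla\psi(X_t), Z_t - X_t\rangle = -\psi(X_t) - e^{-\alpha_t}\langle\nabla\psi(X_t),\dot X_t\rangle$, which (after the chain rule) yields $-\tfrac{d}{dt}\{e^{\beta_t}\psi(X_t)\}$ together with $\dot\beta_t e^{\beta_t}\psi(x)$, just as in the proof of Proposition~\ref{prop:prox_dynamics}.

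Assembling the two contributions gives $\dot{\mathcal{E}}_t \le -\mu\dot\beta_t e^{\beta_t} D_h(Z_t,X_t) + (e^{\alpha_t} - \dot\beta_t)e^{\beta_t}\langle\nabla f(X_t), x - X_t\rangle$; the first term is $\le 0$ since $D_h \ge 0$, and the second vanishes under the strict scaling $\dot\beta_t = e^{\alpha_t}$ or, under the relaxed scaling~\eqref{Eq:IdeScaBet}, is $\le 0$ once $x = x^\ast$, because convexity of $f = \varphi + \psi$ gives $\langle\nabla f(X_t), x^\ast - X_t\rangle \le f(x^\ast) - f(X_t) \le 0$. This proves $\dot{\mathcal{E}}_t \le 0$ and hence the $O(e^{-\beta_t})$ rate; as in the remark after Proposition~\ref{prop:prox_dynamics}, the same argument survives when $\nabla\psi(Z_t)$ is replaced by a directional subgradient at $Z_t$ provided $\beta_t = p\log t$. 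The main obstacle is the bookkeeping in the middle steps: one must split $x - Z_t$ consistently in both gradient terms, match the resulting $\dot X_t$-pieces against $\tfrac{d}{dt}\{e^{\beta_t}\varphi(X_t)\}$ and $\tfrac{d}{dt}\{e^{\beta_t}\psi(X_t)\}$ — which for $\psi$ costs an extra convexity estimate since its gradient lives at $Z_t$ — and carry the discarded $D_h(Z_t,X_t)$ term and the $(e^{\alpha_t}-\dot\beta_t)$ residual correctly through every substitution; once these are aligned, the estimate closes exactly as in Propositions~\ref{prop:strLyap} and~\ref{prop:prox_dynamics}.
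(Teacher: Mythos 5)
Your proposal is correct and follows essentially the same route as the paper's proof in Appendix~\ref{Sec:Prox}: differentiate $e^{\beta_t}\mu D_h(x,Z_t)$ via~\eqref{eq:iden}, substitute~\eqref{eq:comp2}, collapse the $\nabla h$ terms with the Bregman three-point identity~\eqref{eq:bregthree}, absorb $\mu\dot\beta_t e^{\beta_t}D_h(x,X_t)$ using $\mu$-strong convexity of $\varphi$ with respect to $h$, handle $\psi$ by two applications of convexity (first at $Z_t$, then relating $\psi(Z_t)$ back to $\psi(X_t)$ so the $\dot X_t$ pieces combine into $\langle\nabla f(X_t),\dot X_t\rangle$), and discard the nonpositive $-\mu\dot\beta_t e^{\beta_t}D_h(Z_t,X_t)$ and scaling residual. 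The only cosmetic difference is that you bound the $(e^{\alpha_t}-\dot\beta_t)$ residual via $\langle\nabla f(X_t),x^\ast-X_t\rangle\le 0$ while the paper keeps it as $\langle\nabla\varphi(X_t),x-X_t\rangle-(\psi(X_t)-\psi(x))$, which is equivalent for the purpose of the estimate.
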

%We replace the mirror update with the obtain an algorithm from these dynamics, 
\begin{proof}
\begin{align*}
\frac{d}{dt}\left\{\mu e^{\beta_t} D_h(x, Z_t)\right\} &= \mu \dot \beta_t e^{\beta_t} D_h(x, Z_t) - \mu e^{\beta_t}\left\langle \frac{d}{dt}\nabla  h(Z_t), x - Z_t\right\rangle \\
&= \mu \dot \beta_t e^{\beta_t}\Big(\left\langle\nabla h(Z_t) - \nabla h(X_t), x - Z_t\right\rangle + D_h(x, Z_t)\Big)\\
&\quad  + e^{\alpha_t+ \beta_t} \langle \nabla \varphi(X_t) + \nabla \psi(Z_t), x- Z_t\rangle\\
&= \mu \dot \beta_t e^{\beta_t}\Big( D_h(x, X_t) - D_h(Z_t, X_t)\Big)  + \dot \beta_te^{ \beta_t} \langle \nabla \varphi(X_t),  x- X_t\rangle + e^{\beta_t} \langle \nabla \varphi(X_t), \dot X_t\rangle\\ %+\dot \beta_t e^{ \beta_t} \langle\nabla \psi(Z_t), x- Z_t\rangle \\
& \quad + e^{\beta_t}\left(e^{\alpha_t} - \dot \beta_t\right)\langle \nabla \varphi(X_t),  x- X_t\rangle + e^{\alpha_t+ \beta_t} \langle\nabla \psi(Z_t), x- Z_t\rangle.
\end{align*}
The second line comes from plugging in dynamics \eqref{eq:comp2}. The third line uses the Bregman three-point identity~\eqref{eq:bregthree}. We continue by using the strong convexity assumption:
\begin{align*}
\frac{d}{dt}\left\{\mu e^{\beta_t} D_h(x, Z_t)\right\} &= \leq -\dot \beta_t e^{\beta_t} (\varphi(X_t)  -\varphi(x))- e^{\beta_t} \langle \nabla \varphi(X_t), \dot X_t\rangle 
+ e^{\beta_t}\left(e^{\alpha_t} - \dot \beta_t\right) \langle \nabla \varphi(X_t),  x- X_t\rangle \\
&\quad- e^{\alpha_t+ \beta_t} (\psi(Z_t)-  \psi(x))\\
&\leq -\dot \beta_t e^{\beta_t} (f(X_t)  -f(x))- e^{\beta_t} \langle \nabla \varphi(X_t), \dot X_t\rangle  - e^{\beta_t + \alpha_t} \langle \nabla \psi(X_t),Z_t - X_t\rangle \\
&\quad+ e^{\beta_t}\left(e^{\alpha_t} - \dot \beta_t\right)\left( \langle \nabla \varphi(X_t),  x- X_t\rangle - (\psi(X_t)-  \psi(x))\right) \\
&\leq -\dot \beta_t e^{\beta_t} (f(X_t)  -f(x))- e^{\beta_t} \langle \nabla f(X_t), \dot X_t\rangle\\
& + e^{\beta_t}\left(e^{\alpha_t} - \dot \beta_t\right)\left( \langle \nabla \varphi(X_t),  x- X_t\rangle - (\psi(X_t)-  \psi(x))\right) \\
%&\leq -\frac{d}{dt}\left\{e^{\beta_t} (f(X_t)  -f(x))\right\} + e^{\beta_t}\left(e^{\alpha_t} - \dot \beta_t\right)\left( \langle \nabla \varphi(X_t),  x- X_t\rangle - (\psi(X_t)-  \psi(x))\right) \\
&\leq -\frac{d}{dt}\left\{e^{\beta_t} (f(X_t)  -f(x))\right\}.
\end{align*}
 The fourth line follows the strong convexity of $\varphi$ and convexity of $\psi$. The fifth line (second inequality) uses the convexity of $\psi$ once again. The third inequality plugs in the definition of $Z_t - X_t$ and the second-last inequality follows from the chain rule and the ideal scaling condition~\eqref{Eq:IdeScaBet}.
\end{proof}

 Assume $h$ is Euclidean and the ideal scaling~\eqref{Eq:IdeScaBet} holds with equality $\dot \beta_t = e^{\alpha_t}$. To discretize the dynamics~\eqref{eq:comp2}, we split the vector field~\eqref{eq:comp2} into two components, $v_1(x,z,t) =  \dot \beta_t (X_t - Z_t - (1/\mu) \nabla \varphi(X_t))$, and $v_2(x,z,t) = - \dot \beta_t/ \mu \nabla \psi(Z_t)$ and apply the explicit Euler scheme to $v_2(x,z,t)$ and the implicit Euler scheme to $v_1(x,z,t)$, with the same identification $\dot \beta_t = \tau_k/\delta$ for both vector fields.\footnote{While using the same identification of $\dot \beta_t$ for both vector fields is problematic---since one is being evaluated forward in time and the other backward in time---the error bounds only scale sensibly in the setting where $\dot \beta_t = \gamma \leq \sqrt{\mu}$ is a constant. }
This results in the proximal update
%Like the setting where $f$ is weakly convex functions, we can avoid the suboptimality that comes from the non-smoothness simple function $\psi$ by simply optimizing over it. 
%Turning \eqref{Eq:ZSeq1} into a proximal mirror descent update (where $h$ is Euclidean), we have 
\begin{align}\label{eq:proxstrong}
z_{k+1} = \arg \min_z \left\{  \psi(z) + \langle \nabla \varphi(x_{k}), z\rangle + \frac{\mu}{2\tau_k}\|z - (1- \tau_k)z_k - \tau_kx_k\|^2 \right\}.
\end{align}
In full, we can write the algorithm as %which has the optimality condition,
\begin{subequations}\label{eq:algprox2}
\begin{align} 
x_k &= \frac{\tau_k}{1+\tau_k} z_k + \frac{1}{1+\tau_k} y_k \label{Eq:Coupling122}\\
z_{k+1} - z_k &= \tau_k \left(x_k - z_k - \frac{1}{\mu}\nabla \varphi(x_k) - \frac{1}{\mu}\nabla \psi(z_{k+1})\right) \label{eq:prox_zstep}\\
y_{k+1} &= \mathcal{G}(x). \label{Eq:Grad122}
\end{align}
\end{subequations}
%We replace the mirror update~\eqref{Eq:ZSeq1} with this new update~\eqref{eq:proxstrong} in algorithm~\eqref{Eq:StrongConv21}. 
We summarize how the initial bound changes with this modified update in the following proposition.
%in algorithm~\eqref{Eq:StrongConv21} with proximal update~\eqref{eq:proxstrong}, we can claim the following bound;
\begin{proposition}\label{prop:fista3}
Assume $h$ is Euclidean, $\varphi$ is strongly convex, $\varphi$ is  $(1/\epsilon)$-smooth, and $\psi$ is convex and simple.  
Using the Lyapunov function~\eqref{eq:lyapdd}, we have
\begin{align*}
E_{k+1} - E_k  \leq \varepsilon_{k+1},
\end{align*}
%for algorithm~\eqref{Eq:StrongConv21} -- replacing~\eqref{Eq:ZSeq1} with the proximal update~\eqref{eq:proxstrong} -- where, 
for algorithm~\eqref{eq:algprox2}, where
\begin{align*}%\label{eq:errprox}
\varepsilon_{k+1} &= - A_{k+1}\frac{\mu}{2}\| (z_k - z_{k+1}) - \tau_k(z_k - x_k)\|^2 + \frac{A_{k+1}}{2\epsilon}\|x_{k} - y_{k+1}\|^2\\
&\qquad+ A_{k+1}\left(\frac{\tau_k}{2\epsilon} - \frac{\mu}{2\tau_k}\right)\|x_k - y_k\|^2 + A_{k+1}\psi(y_{k+1}) - A_k \psi(y_k) - \alpha_k \psi(z_{k+1}).\notag 
% - \frac{\mu}{2}\|\tau_k(x_k - z_k) - (z_k - z_{k+1})\|^2+ \frac{1}{2\epsilon}\|x_{k} - y_{k+1}\|^2 \\
%&\quad  + \left(\frac{\tau_k}{2\epsilon} - \frac{\mu}{2\tau_k}\right)\|x_k - y_k\|^2
\end{align*}
\end{proposition}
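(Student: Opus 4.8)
The plan is to mimic the proof of Proposition~\ref{prop:strongbound} given in Appendix~\ref{App:strongbound_proof}, grafting onto it the device used in the proof of Proposition~\ref{prop:fista} for separating out the nonsmooth term $\psi$. Concretely, I introduce the normalized Lyapunov function $\tilde E_k = \mu D_h(x^\ast, z_k) + f(y_k) - f(x^\ast)$, so that $E_k = A_k \tilde E_k$, and aim to establish $\tilde E_{k+1} - \tilde E_k \leq -\tau_k \tilde E_k + \varepsilon_{k+1}/A_{k+1}$ with $\tau_k = (A_{k+1}-A_k)/A_{k+1}$; multiplying through by $A_{k+1}$ and using $A_{k+1}(1-\tau_k) = A_k$ then yields the claimed bound $E_{k+1} - E_k \leq \varepsilon_{k+1}$, since $\tilde E_k \geq 0$ by $\mu$-strong convexity of $\varphi$ with respect to $h$ and convexity of $\psi$.

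First I would expand $\tilde E_{k+1} - \tilde E_k$, splitting $f = \varphi + \psi$, and rewrite the Bregman difference $\mu D_h(x^\ast, z_{k+1}) - \mu D_h(x^\ast, z_k)$ via the elementary identity as $-\mu\langle \nabla h(z_{k+1}) - \nabla h(z_k), x^\ast - z_{k+1}\rangle - \mu D_h(z_{k+1}, z_k)$ (here $\nabla h = \mathrm{id}$ since $h$ is Euclidean). Next I substitute the variational condition coming from update~\eqref{eq:prox_zstep}, namely $z_{k+1} - z_k = \tau_k(x_k - z_k - \tfrac1\mu\nabla\varphi(x_k) - \tfrac1\mu\nabla\psi(z_{k+1}))$, apply the Bregman three-point identity~\eqref{eq:bregthree} to the $\langle x_k - z_k, x^\ast - z_{k+1}\rangle$ contribution, and use the coupling~\eqref{Eq:Coupling122} to rewrite $\langle \nabla\varphi(x_k), x^\ast - z_{k+1}\rangle$ in terms of $x^\ast - x_k$ and $x_k - y_k$. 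Strong convexity of $\varphi$ with respect to $h$~\eqref{eq:unif} gives $\langle\nabla\varphi(x_k), x^\ast - x_k\rangle \leq \varphi(x^\ast) - \varphi(x_k) - \mu D_h(x^\ast, x_k)$, while treating $\nabla\psi(z_{k+1})$ as a genuine subgradient gives $\langle\nabla\psi(z_{k+1}), x^\ast - z_{k+1}\rangle \leq \psi(x^\ast) - \psi(z_{k+1})$; combining these produces the $-\tau_k\tilde E_k$ term together with the leftover $A_{k+1}\psi(y_{k+1}) - A_k\psi(y_k) - \alpha_k\psi(z_{k+1})$ pieces (with $\alpha_k = A_{k+1}-A_k$), which are simply carried into the error because $\mathcal{G}$ is arbitrary. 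Bounding $\varphi(y_{k+1}) - \varphi(x_k)$ by $(1/\epsilon)$-smoothness of $\varphi$ introduces $\langle\nabla\varphi(x_k), y_{k+1} - x_k\rangle + \tfrac{1}{2\epsilon}\|y_{k+1}-x_k\|^2$, the quadratic term becoming, after multiplication by $A_{k+1}$, the $\tfrac{A_{k+1}}{2\epsilon}\|x_k - y_{k+1}\|^2$ summand of $\varepsilon_{k+1}$.

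The final and most delicate step is to assemble the remaining quadratic terms. The Bregman penalty $-\mu D_h(z_{k+1},z_k) = -\tfrac\mu2\|z_{k+1}-z_k\|^2$ together with the surviving linear terms in $z_{k+1}-z_k$, $\nabla\varphi(x_k)$ and $\nabla\psi(z_{k+1})$ should complete into the square $-A_{k+1}\tfrac\mu2\|(z_k - z_{k+1}) - \tau_k(z_k - x_k)\|^2$ (after multiplying by $A_{k+1}$); one uses update~\eqref{eq:prox_zstep} in the form $(z_k - z_{k+1}) - \tau_k(z_k - x_k) = \tfrac{\tau_k}{\mu}(\nabla\varphi(x_k) + \nabla\psi(z_{k+1}))$, so this square exactly absorbs the $\|\nabla\varphi(x_k)+\nabla\psi(z_{k+1})\|^2$ contribution that would otherwise appear (the composite analogue of the $\tfrac{\tau_k^2}{2\mu}\|\nabla f(x_k)\|^2$ term in Proposition~\ref{prop:strongbound}). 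Separately, the cross terms between $\nabla\varphi(x_k)$ and $x_k - y_k$ coming from~\eqref{Eq:Coupling122}, combined with the $D_\varphi$ slack and the smoothness residual, collapse as in Proposition~\ref{prop:strongbound} into $A_{k+1}(\tfrac{\tau_k}{2\epsilon} - \tfrac{\mu}{2\tau_k})\|x_k - y_k\|^2$. I expect the main obstacle to be purely bookkeeping: keeping straight which point each (sub)gradient is anchored at --- $\nabla\varphi$ at $x_k$, $\nabla\psi$ at $z_{k+1}$ --- while simultaneously completing the Euclidean square and threading the coupling through, so that the three quadratic remainders land in precisely the stated form; no analytic ingredient beyond strong convexity of $\varphi$ with respect to $h$, convexity of $\psi$, and $(1/\epsilon)$-smoothness of $\varphi$ is required.
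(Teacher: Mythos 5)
Your plan is correct and matches the paper's own proof essentially step for step: the paper likewise works with the normalized quantity $\tilde E_k$, substitutes the variational form of~\eqref{eq:prox_zstep}, anchors $\nabla\varphi$ at $x_k$ and the subgradient of $\psi$ at $z_{k+1}$, invokes strong convexity of $\varphi$, convexity of $\psi$, $(1/\epsilon)$-smoothness, the three-point identity, and the coupling~\eqref{Eq:Coupling122}, and finishes by completing the square into $-\frac{\mu}{2}\|\tau_k(x_k-z_k)-(z_k-z_{k+1})\|^2$. The only (harmless) difference is that you keep $\mathcal{G}$ arbitrary and carry the $\psi$-terms into $\varepsilon_{k+1}$ as the proposition states, whereas the paper's written proof specializes to the map~\eqref{eq:weirdG} from the outset via~\eqref{eq:proxiden}.
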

Using the same update~\eqref{eq:weirdG},
\begin{align*}
\mathcal{G}(x) = \tau_{k} z_{k+1} + (1- \tau_{k}) y_{k},
\end{align*}
the bound simplifies nicely,
\begin{align}\label{eq:errprox}
\varepsilon_{k+1} /A_{k+1}&= \left(\frac{\tau_k^2}{2\epsilon}- \frac{\mu}{2}\right)\frac{1}{\mu}\| \nabla \varphi(x_k) + \nabla \psi(z_{k+1})  \|^2  + \left(\frac{\tau_k}{2\epsilon} - \frac{\mu}{2\tau_k}\right)\|x_k - y_k\|^2\notag.
\end{align}
The condition necessary for the error to be non-positive,
%\begin{align*}
$\tau_k \leq \sqrt{\epsilon \mu} = 1/\sqrt{\kappa},
$ %\end{align*}
results in a $O(e^{-k/\sqrt{\kappa}})$ convergence rate. This matches the lower bound for the class of $(1/\epsilon)$-smooth and $\mu$-strongly convex functions.  As in continuous time, this analysis also allows for the use of subgradients of $\psi$. 
\begin{proof}
%We begin with the observation that identity~\eqref{eq:proxiden} allows us to conclude,
%\begin{align}\label{eq:aux3}
%\end{align}
%simply by rearranging terms. %Again, we use Lyapunov function~\eqref{eq:lyapplain} and show the bound $\tilde E_{k+1} - \tilde E_k \leq -\tau_k \tilde E_k + \varepsilon_{k+1}/A_{k+1}$ for the desired error.\\

\begin{align*}
\tilde E_{k+1} - \tilde E_k &= \frac{\mu}{2}\|x^\ast- z_{k+1}\|^2 -  \frac{\mu}{2}\|x^\ast -  z_{k}\|^2 + f(y_{k+1}) - f(y_k)\\
&= -\mu \langle z_{k+1} - z_k, x^\ast -  z_{k+1}\rangle - \frac{\mu}{2}\|z_{k+1} - z_k\|^2  + f(y_{k+1}) - f(y_k)\\
& \leq - \mu \langle z_{k+1} - z_k, x^\ast -  z_{k+1}\rangle - \frac{\mu}{2}\|z_{k+1} - z_k\|^2 +  \langle \nabla \varphi (x_k), y_{k+1} - y_k\rangle  + \frac{1}{2\epsilon}\|x_{k} - y_{k+1}\|^2\\
&\quad - \frac{\mu}{2}\|x_k -y_k\|^2  -\tau_k \psi(y_k) - \tau_k \psi(z_{k+1})\\
 &\overset{\eqref{eq:weirdG}}{=} - \mu \langle z_{k+1} - z_k, x^\ast -  z_{k+1}\rangle - \frac{\mu}{2}\|z_{k+1} - z_k\|^2 +  \tau_k \langle \nabla \varphi(x_k), z_{k+1} - y_k\rangle  - \frac{\mu}{2}\|x_k -y_k\|^2\\
&\quad + \frac{1}{2\epsilon}\|x_{k} - y_{k+1}\|^2  -\tau_k \psi(y_k) - \tau_k \psi(z_{k+1})\\
&\overset{\eqref{eq:proxstep}}{=} \tau_k \langle \nabla \varphi(x_k), x^\ast -  z_{k+1}\rangle - \frac{\mu}{2}\|z_{k+1} - z_k\|^2 +  \tau_k \langle \nabla \varphi (x_k), z_{k+1} - y_k\rangle + \frac{1}{2\epsilon}\|x_{k} - y_{k+1}\|^2 \\
&\quad  - \frac{\mu}{2}\|x_k -y_k\|^2 + \mu \tau_k\langle x_k - z_k, x^\ast- z_{k+1}\rangle + \tau_k \langle \nabla \psi(z_{k+1}), x^\ast -  z_{k+1}\rangle -\tau_k \psi(y_k) - \tau_k \psi(z_{k+1}) \\
&\leq \tau_k \langle \nabla \varphi(x_k), x^\ast -  x_k\rangle - \frac{\mu}{2}\|z_{k+1} - z_k\|^2 +  \tau_k \langle \nabla \varphi(x_k), x_k - y_k\rangle  + \frac{1}{2\epsilon}\|x_{k} - y_{k+1}\|^2\\
&\quad - \frac{\mu}{2}\|x_k -y_k\|^2 + \mu \tau_k\langle x_k - z_k, x^\ast- z_{k+1}\rangle  -\tau_k (\psi(y_k) - \psi (x^\ast))\\
%&\quad\\
%
&\leq - \tau_k\left(\varphi(x_k) - \varphi(x^\ast) + \frac{\mu}{2}\|x^\ast - x_k\|^2\right) + \mu \tau_k\langle x_k - z_k, x^\ast- z_{k}\rangle + \tau_k \langle \nabla \varphi (x_k), x_k -  y_k\rangle  \\
&\quad + \frac{1}{2\epsilon}\|x_{k} - y_{k+1}\|^2 - \frac{\mu}{2}\|x_k -y_k\|^2 + \mu \tau_k\langle x_k - z_k, z_{k} - z_{k+1}\rangle  -\frac{\mu}{2}\|z_{k+1} - z_k\|^2\\
&\quad -\tau_k (\psi(y_k) - \psi(x^\ast)).
\end{align*}
The first inequality follows from the strong convexity and $(1/\epsilon)$-smoothness of $\varphi$ and~\eqref{eq:proxiden}, from which we can conclude $\varphi(y_{k+1}) - \varphi(y_k) \leq -\tau_k \varphi(y_k) - \tau_k \varphi(z_{k+1})$. The second inequality  follows from the convexity of $\psi$. The third inequality uses the strong convexity of $f$. Next, we use identity~\eqref{eq:bregthree} and the smoothness of $\varphi$ to simplify the bound as follows:
\begin{align*}
\tilde E_{k+1} - \tilde E_k
%
%&\quad     \\
%
& \overset{\eqref{Eq:Coupling1}}{\leq} - \tau_k\left(\varphi(x_k) - f(x^\ast) + \frac{\mu}{2}\|x^\ast - z_k\|^2\right) - \frac{ \mu }{2\tau_k}\|x_k - y_k\|^2 + \tau_k \langle \nabla \varphi(x_k), x_k -  y_k\rangle \\
%&\quad  \\
&\quad + \frac{1}{2\epsilon}\|x_{k} - y_{k+1}\|^2 - \frac{\mu}{2}\|x_k -y_k\|^2 + \mu \tau_k\langle x_k - z_k, z_{k} - z_{k+1}\rangle  -\frac{\mu}{2}\|z_{k+1} - z_k\|^2  -\tau_k \psi(y_k)\\
%\end{align*}
%The first inequality follows from the convexity of $\psi$. The second inequality uses the strong convexity of $f$. The last line uses the  We can now use the smoothness of $\varphi$ to simplify the upper bound. 
%\begin{align*}
&\leq - \tau_k\left(f(y_k) - f(x^\ast) + \frac{\mu}{2}\|x^\ast - z_k\|^2\right) - \frac{\mu}{2\tau_k}\|x_k - y_k\|^2+ \frac{\tau_k}{2\epsilon} \| x_k -  y_k\|^2 \\
&\quad + \frac{1}{2\epsilon}\|x_{k} - y_{k+1}\|^2 - \frac{\mu}{2}\|x_k -y_k\|^2 + \mu \tau_k\langle x_k - z_k, z_{k} - z_{k+1}\rangle  -\frac{\mu}{2}\|z_{k+1} - z_k\|^2 \\
&\overset{\eqref{Eq:Coupling1}}{=}- \tau_k E_k - \frac{\tau_k^2 \mu}{2}\|x_k - z_k\|^2  + \frac{1}{2\epsilon}\|x_{k} - y_{k+1}\|^2 + \mu \tau_k\langle x_k - z_k, z_{k} - z_{k+1}\rangle \\
&\quad -\frac{\mu}{2}\|z_{k+1} - z_k\|^2 + \left(\frac{\tau_k}{2\epsilon} - \frac{\mu}{2\tau_k}\right)\|x_k - y_k\|^2\\
&=- \tau_k E_k - \frac{\mu}{2}\|\tau_k(x_k - z_k) - (z_k - z_{k+1})\|^2+ \frac{1}{2\epsilon}\|x_{k} - y_{k+1}\|^2  + \left(\frac{\tau_k}{2\epsilon} - \frac{\mu}{2\tau_k}\right)\|x_k - y_k\|^2.
\end{align*}
It remains to check \begin{align*}x_k - y_{k+1} \overset{\eqref{eq:weirdG}}{=} x_k - y_k - \tau_k(z_{k+1} - y_k)&\overset{\eqref{Eq:Coupling1}}{=} \tau_k(z_k - x_k - z_{k+1} + y_k) \overset{\eqref{Eq:Coupling1}}{=}  \tau_k(\tau_k(x_k - z_k) - (z_k - z_{k+1})).\end{align*} \end{proof}

%   We demonstrate the import of the Lyapunov analysis by studying algorithms~\eqref{eq:agd}, \eqref{Eq:StrongConv21}, and \eqref{eq:str1} where stochastic estimates of the gradient $G(x)$ are used instead of full gradients $\mathbb{E}[G(x)] = \nabla f(x)$.\footnote{For a generalization for how the Lyapunov technique can be applied to block coordinate methods, we refer the reader to~\cite{Tu17}.} We conjecture that there is way more to explore using these analytical techniques.
   \subsection{Stochastic methods}
   \label{App:Stoch}
    We begin with the following proposition.
 \begin{claim}
  Assume $h$ is $\sigma$-strongly convex and $f$ is convex. For algorithm~\eqref{eq:agd}, where stochastic gradients are used instead of full gradients and $\mathcal{G}(x) = x_{k+1}$ , we can show the following error bound:
   \begin{align}%\label{eq:arg2}
\frac{\mathbb{E}[E_{k+1}]- E_{k}}{\delta} \leq \mathbb{E}[\varepsilon_{k+1}],
\end{align}
for Lyapunov function~\eqref{eq:nlyap}, where the error scales as
 \begin{align}\label{eq:erboud1}
\mathbb{E}[\varepsilon_{k+1}] = \frac{(A_{k+1}- A_k)^2}{2\sigma\delta}\mathbb{E}[\|G(x_{k+1})\|^2].
\end{align}
For algorithm~\eqref{eq:str1}, where stochastic gradients are used instead of full gradients, we can show the following error bound:
\begin{align*}
\frac{\mathbb{E}[E_{k+1}]- E_{k}}{\delta} \leq -\frac{\tau_k}{\delta} E_k + \mathbb{E}[\varepsilon_{k+1}]
\end{align*}
for Lyapunov function~\eqref{eq:lyapdd}, where the error scales as
 \begin{align}\label{eq:secc}
\mathbb{E}[\varepsilon_{k+1}] =  \frac{ A_k\tau_{k}^2}{2\mu\sigma\delta}\mathbb{E}[\|G(x_{k+1})\|^2].
\end{align}
   \end{claim}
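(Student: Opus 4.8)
The plan is to re-run the deterministic proofs of Proposition~\ref{Prop:WeakBound} (Appendix~\ref{App:WeakBound}, first bound) and Proposition~\ref{prop:quasi-strong} (Appendix~\ref{App:quasi-strong}, second bound) essentially line for line, changing only two things: substituting the stochastic estimate $G(x_{k+1})$ for the full gradient $\nabla f(x_{k+1})$ wherever the latter enters through the dual (mirror) update, and then taking the conditional expectation $\mathbb{E}[\cdot\mid\mathcal{F}_k]$ over the $\sigma$-algebra $\mathcal{F}_k$ generated by all the randomness through iteration $k$. The reason this costs no extra effort is that in both deterministic arguments convexity of $f$ (resp. $\mu$-uniform convexity of $f$ with respect to $h$) is invoked only at the very last line, to eliminate the quantities $\langle\nabla f(x_{k+1}),\, x^\ast - x_{k+1}\rangle + f(x_{k+1}) - f(x^\ast)$ and $\langle\nabla f(x_{k+1}),\, x_k - x_{k+1}\rangle + f(x_{k+1}) - f(x_k)$; every preceding step is either pure algebra (the three-point identity~\eqref{eq:bregthree} and the couplings~\eqref{eq:coup}, \eqref{eq:strx}), linear in the gradient (the updates~\eqref{eq:mir}, \eqref{eq:strz}), or a pathwise inequality insensitive to randomness ($\sigma$-strong convexity of $h$ bounding $-D_h(z_{k+1},z_k)$, then Fenchel--Young absorbing $\langle G(x_{k+1}),\, z_k-z_{k+1}\rangle$ against it and leaving a multiple of $\|G(x_{k+1})\|^2$).

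Concretely, for the first bound I would first note that $x_{k+1} = \tau_k z_k + (1-\tau_k) x_k$ (using $\mathcal{G}(x)=x_{k+1}$, so $y_k = x_k$) is $\mathcal{F}_k$-measurable, hence $\mathbb{E}[G(x_{k+1})\mid\mathcal{F}_k] = \nabla f(x_{k+1})$. Copying the derivation of Appendix~\ref{App:WeakBound} with $p=2$ and $G(x_{k+1})$ in place of $\nabla f(x_{k+1})$ gives the pathwise estimate
\begin{align*}
E_{k+1} - E_k \;&\le\; \alpha_k\big(f(x_{k+1}) - f(x^\ast) + \langle G(x_{k+1}),\, x^\ast - x_{k+1}\rangle\big)\\
&\quad + A_k\big(f(x_{k+1}) - f(x_k) + \langle G(x_{k+1}),\, x_k - x_{k+1}\rangle\big)\\
&\quad + \tfrac{\alpha_k^2}{2\sigma}\|G(x_{k+1})\|^2,
\end{align*}
with $E_k$ the Lyapunov function~\eqref{eq:nlyap} and $\alpha_k = A_{k+1}-A_k$. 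Applying $\mathbb{E}[\cdot\mid\mathcal{F}_k]$ and convexity of $f$ kills the two brackets, leaving $\mathbb{E}[E_{k+1}\mid\mathcal{F}_k] - E_k \le \tfrac{(A_{k+1}-A_k)^2}{2\sigma}\mathbb{E}[\|G(x_{k+1})\|^2\mid\mathcal{F}_k]$, which after dividing by $\delta$ is~\eqref{eq:erboud1}. For the second bound (under the additional assumption, as in Proposition~\ref{prop:quasi-strong}, that $f$ is $\mu$-uniformly convex with respect to $h$) I would run the Appendix~\ref{App:quasi-strong} derivation with $G(x_{k+1})$ replacing $\nabla f(x_{k+1})$: the update~\eqref{eq:strz} is linear in the gradient, $\sigma$-strong convexity of $h$ together with Fenchel--Young produce the pathwise term $\tfrac{A_k\tau_k^2}{2\mu\sigma}\|G(x_{k+1})\|^2$ (the $1/\mu$ coming from the $\tfrac1\mu\nabla f$ in~\eqref{eq:strz}), the coupling~\eqref{eq:strx} turns $\langle G(x_{k+1}),\, x^\ast - z_k\rangle$ into $\langle G(x_{k+1}),\, x^\ast - x_{k+1}\rangle$, and~\eqref{eq:bregthree} together with $\mu$-uniform convexity of $f$ supplies the contraction $-\tau_k E_k$ for $E_k$ given by~\eqref{eq:lyapddd}, once one conditions on $\mathcal{F}_k$ and applies convexity of $f$ to the leftover inner products; dividing by $\delta$ yields the stated inequality with $\mathbb{E}[\varepsilon_{k+1}] = \tfrac{A_k\tau_k^2}{2\mu\sigma\delta}\mathbb{E}[\|G(x_{k+1})\|^2\mid\mathcal{F}_k]$.

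The only genuinely new ingredient, and hence the step to handle with care, is the measurability/timing bookkeeping rather than any new inequality: I must check that every factor multiplying $G(x_{k+1})$ linearly --- namely $x^\ast$, $x_k$, $x_{k+1}$, $z_k$, and the scalars $A_k, A_{k+1}, \tau_k$ --- is $\mathcal{F}_k$-measurable, so that conditional expectation converts those inner products into the exact-gradient inner products to which convexity applies, while the quadratic term $\|G(x_{k+1})\|^2$ simply survives as $\mathbb{E}[\|G(x_{k+1})\|^2\mid\mathcal{F}_k]$ in the error (no variance bound, and no unbiasedness of the square, is required here). There is no circularity, since $x_{k+1}$ is formed before the $(k+1)$-st stochastic gradient is queried, and the one place where the randomness of $G(x_{k+1})$ feeds into $E_{k+1}$ --- through $z_{k+1}$ inside the convex, hence integrable, divergence $D_h(x^\ast, z_{k+1})$ --- makes the conditional expectation well defined. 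In short, the dynamical/Lyapunov bookkeeping leaves essentially nothing to grind through: the proof is the deterministic one plus a single application of the tower/unbiasedness property.
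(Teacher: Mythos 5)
Your proposal is correct and is essentially the paper's own argument: the paper's proof consists of the single remark that the claim ``follows from the proof of Proposition~\ref{Prop:WeakBound} and~\ref{prop:quasi-strong}, where we simply take $\nabla f$ to be stochastic,'' and you carry out exactly that substitution, adding the measurability and conditional-expectation bookkeeping that the paper leaves implicit. The observation that $x_{k+1}$ is $\mathcal{F}_k$-measurable, so that conditioning turns the linear terms in $G(x_{k+1})$ into exact-gradient terms while the quadratic term survives into the error, is precisely the (unstated) content of the paper's one-line proof.
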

 The proof of this claim follows from the proof of Proposition~\ref{Prop:WeakBound} and~\ref{prop:quasi-strong}, where we simply take $\nabla f$ to be stochastic.
Maximizing over this sequence gives a $O(1/\sqrt{k})$ for the first algorithm and $O(1/k)$ for the second. This convergence rate is optimal and matches the rate of SGD. Notice, however, that the convergence rate is for the entire sequence of iterates, unlike SGD. %This is in contrast to SGD. We compare this to other results which show that in the Euclidean setting, some form of averaging for SGD allows us to obtain guarantees on the last iterate~\cite{Rak12}.

\paragraph{Stochastic dynamics.}
Having introduced the dynamics~\eqref{eq:el2}, it is clear that the following stochastic dynamics
\begin{align*}
\mathrm{d}Z_t &= \dot \beta_t(X_t\mathrm{d}t - Z_t \mathrm{d}t- (1/\mu)(\nabla f(X_t) \mathrm{d}t+ \sigma(X_t, t) \mathrm{d}B_t))\\
\mathrm{d}X_t &= \dot \beta_t (Z_t - X_t)\mathrm{d}t
\end{align*}
is a natural candidate for {\em approximating} the stochastic variants of algorithms~\eqref{eq:str1} and~\eqref{Eq:StrongConv21} in the setting where $h(x) = \frac{1}{2}\|x\|^2$ is Euclidean and $f$ is $\mu$-strongly convex.\footnote{Some of the following statements can be made more rigorous and motivates further study. The dynamics can also be generalized to the more general setting in the natural way, but for simplicity we take $h$ to be Euclidean.} Here $B_t \in \mathbb{R}^d$ is a standard Brownian motion, and $\sigma(X_t,t)\in \mathbb{R}^{d \times d}$ is the diffusion coefficient. We assume $\mathbb{E}\|\sigma(X_t, t)^\top \sigma(X_t, t)\|^2 \leq M$ for some positive constant $M \in\mathbb{R}^{+}$.
We can use It\^o's formula to calculate $\mathrm{d}\mathcal{E}_t$ where $\mathcal{E}_t$ is given by \eqref{eq:breg2} as follows, 
\begin{align}\label{eq:ito}
\mathrm{d}\mathcal{E}_t = \frac{\partial \mathcal{E}_t}{\partial t} \mathrm{d}t + \left\langle\frac{ \partial \mathcal{E}_t}{\partial X_t}, \mathrm{d}X_t \right\rangle + \left\langle \frac{\partial \mathcal{E}_t}{\partial Z_t}, \mathrm{d}Z_t \right\rangle  + \frac{\dot \beta_t^2 e^{\beta_t}}{2 \mu} \text{tr} \left( \sigma(X_t, t)^\top \sigma(X_t, t) \right) \mathrm{d}t.
\end{align}
 We compute:
\begin{align*}
\frac{\partial \mathcal{E}_t}{\partial t} &= \dot \beta_t e^{\beta_t}\left( f(X_t) - f(x^\ast) + \frac{\mu}{2}\|x^\ast - Z_t\|^2\right),\\
\frac{\partial \mathcal{E}_t}{\partial X_t} &=e^{\beta_t}\nabla f(X_t),\\
\frac{\partial \mathcal{E}_t}{\partial Z_t} & = \mu(x^\ast - Z_t).
\end{align*}
Plugging this into~\eqref{eq:ito}, we have 
\begin{align*}
\mathrm{d}\mathcal{E}_t &= \dot \beta_t e^{\beta_t}\left( f(X_t) - f(x^\ast) + \frac{\mu}{2}\|x^\ast - Z_t\|^2\right) \mathrm{d}t  + \dot \beta_t e^{\beta_t} \left\langle\nabla f(X_t), Z_t - X_t \right\rangle \mathrm{d}t + \dot \beta_te^{\beta_t} \mu \langle x^\ast - Z_t,X_t  - Z_t\rangle \mathrm{d}t\\
& +  \dot \beta_te^{\beta_t}\langle x^\ast - Z_t,\nabla f(X_t)\rangle  \mathrm{d}t+ \dot \beta_te^{\beta_t}\langle x^\ast - Z_t,\sigma(X_t,t)\rangle\mathrm{d}t + \frac{\dot \beta_t^2 e^{\beta_t}}{2 \mu} \text{tr} \left( \sigma(X_t, t)^\top \sigma(X_t, t) \right) \mathrm{d}t\\
&\leq  \dot \beta_t\langle x^\ast - Z_t,\sigma(X_t,t)\rangle \mathrm{d}t+   \frac{\dot \beta_t^2 e^{\beta_t}}{2 \mu} \text{tr} \left( \sigma(X_t, t)^\top \sigma(X_t, t) \right) \mathrm{d}t,
\end{align*}
where the inequality follows from the proof of proposition~\ref{prop:strLyap} which can be found in Appendix~\ref{app:strLyapproof}. That is, we can conclude
\begin{align*}
\mathcal{E}_t \leq \mathcal{E}_0 -\int_0^t  \dot \beta_se^{\beta_s}\langle x^\ast - Z_s,\sigma(X_s,s)\rangle \mathrm{d}s+  \int_0^t  \frac{\dot \beta_s^2 e^{\beta_s}}{2 \mu} \text{tr} \left( \sigma(X_s, s)^\top \sigma(X_s, s) \right) \mathrm{d}s.
\end{align*}
If we take the expectation of both sides, the middle term, $\int_0^t  \dot \beta_se^{\beta_s}\langle x^\ast - Z_s,\sigma(X_s,s)\rangle \mathrm{d}s $, will vanish by the martingale property of the It\^o integral. This allows us to conclude that
\begin{align*}
\mathbb{E}[f(X_t) - f(x^\ast)] \leq \frac{\mathcal{E}_0 + \mathbb{E}\left[\int_0^t  \frac{\dot \beta_s^2 e^{\beta_s}}{2 \mu} \text{tr} \left( \sigma(X_s, s)^\top \sigma(X_s, s) \right) \mathrm{d}s\right]}{e^{\beta_t}}.
\end{align*}
In particular, choosing $\beta_t = 2\log t + \log(1/2)$, we obtain a $O(1/t^2)+ O(1/t)$ convergence rate. We can compare this upper bound to the bound~\eqref{eq:secc} with the identifications $\dot \beta_t = \tau_k$ and $e^{\beta_t} = A_k$.  

\end{document}